\newtheorem{Theorem}{Theorem}[section]
\newtheorem{Lemma}{Lemma}[section]
\newtheorem{Corollary}{Corollary}[section]
\begin{document}

\title[Travelling-wave behaviour in doubly nonlinear reaction-diffusion equations]{Travelling-wave behaviour in doubly nonlinear reaction-diffusion equations}

\author[Y.\,Du, A.\,G\'arriz \and F.\,Quir\'os]{Yihong Du, Alejandro G\'arriz, and Fernando Quir\'os}

\address{Yihong Du\hfill\break\indent
	School of Science and Technology, University of New England,
	\hfill\break\indent Armidale, NSW 2351, Australia.}\email{{\tt
	ydu@une.edu.au}}

\address{Alejandro G\'arriz\hfill\break\indent
	Instituto de Ciencias Matem\'aticas ICMAT (CSIC-UAM-UCM-UC3M),
	\hfill\break\indent 28049-Madrid, Spain,
		\hfill\break\indent and
	Departamento  de Matem\'{a}ticas, Universidad Aut\'{o}noma de Madrid,
	\hfill\break\indent 28049-Madrid, Spain.} \email{{\tt
		alejandro.garriz@estudiante.uam.es} }

\address{Fernando Quir\'{o}s\hfill\break\indent
	Departamento  de Matem\'{a}ticas, Universidad Aut\'{o}noma de Madrid,
	\hfill\break\indent 28049-Madrid, Spain,
	\hfill\break\indent and Instituto de Ciencias Matem\'aticas ICMAT (CSIC-UAM-UCM-UC3M),
	\hfill\break\indent 28049-Madrid, Spain.} \email{{\tt
		fernando.quiros@uam.es} }

\thanks{Y.~Du was supported by the Australian Research Council. A.~G\'arriz and F.~Quir\'os were supported by the Spanish Ministry of Science and Innovation,  through projects  MTM2017-87596-P and SEV-2015-0554, and by the Spanish National Research Council, through project 20205CEX001.}



\keywords{Reaction-diffusion, doubly nonlinear degenerate diffusion equations, porous medium equation, $p$-Laplacian, travelling wave behaviour.}

\subjclass[2010]{
35B40, 
35C07, 
35K55, 
35K65, 
76S05 
}

\date{}

\begin{abstract}
	We study a family of reaction-diffusion equations that present a doubly nonlinear character given by a combination of the $p$-Laplacian and the porous medium operators. We consider the so-called slow diffusion regime, corresponding to a degenerate behaviour at the level 0, \normalcolor in which nonnegative solutions with compactly supported initial data have a compact support for any later time. For some results we will also require $p\ge2$ to avoid the possibility of a singular behaviour away from 0.
	
	Problems in this family have a unique (up to translations) travelling wave with a finite front. When the initial datum is bounded, radially symmetric and compactly supported, we will prove that solutions converging to 1 (which exist, as we show, for all the reaction terms under consideration for  wide classes of initial data) do so by approaching a translation of this unique traveling wave in the radial direction, but with a logarithmic correction in the position of the front when the dimension is bigger than one. As a corollary we obtain the asymptotic location of the free boundary and level sets in the non-radial case up to an  error term of size $O(1)$.  In dimension one we extend our results to cover  the case of non-symmetric initial data, as well as the case of bounded initial data with supporting sets unbounded in one direction of the real line. 	A main technical tool of independent interest is an estimate for the flux.
	
	
	Most of our results are new even for the special cases of the porous medium equation and the $p$-Laplacian evolution equation.
\end{abstract}

\maketitle

\section{Introduction}
\label{sect-introduction} \setcounter{equation}{0}

The aim of this paper it to characterize the large time behaviour of solutions to the Cauchy problem
\begin{equation}\label{eq:main}
u_t = \Delta_p u^m + h(u)\quad\text{in }Q:=\mathbb{R}^N\times\mathbb{R}_+,  \quad u(\cdot,0)=u_0 \quad\text{in }\mathbb{R}^N,
\end{equation}
where $\Delta_p$ stands for the well-known $p$-Laplacian operator,
$$
\Delta_p u:= \operatorname{div} (|\nabla u|^{p-2} \nabla u).
$$
The initial datum $u_0\not\equiv0$ is assumed to be bounded, nonnegative and compactly supported.

The nonlinearity $h$ is assumed to be in $C^1(\overline{\mathbb{R}}_+)$ and to fulfill, for some $a\in[0,1)$,
\begin{equation}\label{eq:reaction}
h(0)=0, \quad
h(u)\leq 0\text{ if } u \in [0,a],\quad h(u)>0\text{ if } u \in (a,1),\quad h(u)<0\text{ if } u > 1.
\end{equation}
We also ask our reaction term to be nondegenerate at $u=1$, that is,
\begin{equation}\label{eq:reaction_deriv_in_1}
h^\prime (1)  <0.
\end{equation}
If $a=0$, we are in the so-called \emph{monostable} case, which includes the Pearl-Verhulst (or logistic) reaction nonlinearity $h(u)=u(1-u)$. The case $a>0$ contains as particular instances the classical \emph{bistable} nonlinearities, when $h(u)<0$ for $u\in(0,a)$,  and the \emph{combustion} ones, when $h(u)=0$ for $u\in[0,a]$ (in the applications of this case $a$ corresponds to the ignition temperature).

As for the parameters, we will restrict ourselves to the so-called \textit{slow diffusion regime},
\begin{equation}\label{eq:slow.regime}\tag{SDR}
m > 0,\quad p>1,\quad m(p-1)>1.
\end{equation}
In this regime the operator degenerates at the level 0, and solutions propagate with finite speed: if the initial function is compactly supported, the same is true for $u(\cdot,t)$ for all $t>0$; see~\cite{Gilding-Kersner-1996}.  Thus, solutions to problem~\eqref{eq:main} have a \emph{free boundary},
separating the positivity region of $u$ from the region where $u$ vanishes. One of the goals of the present paper is  to determine its location for large times.  At some points, in addition to~\eqref{eq:slow.regime} we will also require  $p \geq 2$, to avoid the possibility of  a singular behaviour of the operator away from $u=0$.

The equation in~\eqref{eq:main} is referred to as being doubly nonlinear because of its nonlinearity in both $u$ and $\nabla u$.  For $p = 2$ and $m >1$ it is widely known as the porous medium equation, and  for $m=1$ and $p>2$ as the $p$-Laplacian evolution equation. Most of our results are new even for these two special cases.

Problems of the family~\eqref{eq:main} are used as models to describe the spreading of biological or chemical species, where the free boundary represents the expanding front, beyond which the species cannot be observed. They also have applications to describe several physical situations. See for instance~\cite{Esteban-Vazquez-1988,Gurtin-MacCamy-1977,Kalashnikov-1987,Kanel-1964,SanchezGarduno-Maini-1994,Zeldovich-1948} and the references therein.  This kind of models were first introduced in 1937 independently by Fisher~\cite{Fisher-1937},  and by Kolmogorov, Petrovsky and Piscounov~\cite{Kolmogorov-Petrovsky-Piscounov-1937} in the semilinear case $m=1$, $p=2$, with a monostable nonlinearity, $a=0$,  to study spreading questions in population genetics.

For all types of the reaction terms under consideration, which are quite general, we will give conditions on the initial data guaranteeing that the corresponding solution converges to 1 uniformly in compact sets as time goes to infinity, a situation that, following the literature, we will call \emph{spreading}. As we will see, for  certain monostable reaction nonlinearities all nontrivial nonnegative solutions spread. However, for bistable and combustion nonlinearities there are also  solutions converging uniformly to 0, a situation we call \emph{vanishing}. These results were proved  in the semilinear case  by Aronson and Weinberger in their classical paper~\cite{Aronson-Weinberger-1978}, and by one of the authors in the porous medium case   in the recent paper~\cite{Garriz-2020}. For the general doubly nonlinear problem the only available information is due to Audrito  who shows the existence of both spreading and vanishing solutions  for bistable nonlinearities in~\cite{Audrito-2019}.

The problems that we are considering have a unique travelling wave with a monotonic front connecting 1 to 0 and supported in $(-\infty,0]$; see for instance~\cite{Gilding-Kersner-2004} for the case $p=2$ and~\cite{Audrito-2019,Audrito-Vazquez-2017,Garriz-plap} for $p\neq 2$.
For $p\ge 2$ we will prove that the free boundary of any \emph{spreading} radial solution with  bounded and compactly supported initial datum approaches as $t\to\infty$ the sphere of radius
$$
c^*t-(N-1)c_\#\log t-r_0,
$$
where $c^*$ is the velocity of the unique travelling wave mentioned above, $c_\#>0$ is a constant independent of the solution, and $r_0\in \mathbb{R}$ is a constant that depends on the initial datum.
Moreover, in the moving radial coordinate in which the free boundary is fixed,  such solutions converge towards the profile of the aforementioned travelling wave, which implies that level sets of height $\lambda\in(0,1)$ move asymptotically like the front, though away from the free boundary by some constant distance $r_0(\lambda)$ that depends not only on the initial datum, but also on $\lambda$. As a corollary we obtain the location of the free boundary and of level sets for nonradial spreading solutions with compactly supported initial data up to an $O(1)$ error term. These are the main results of the paper, which can be slightly improved in the one-dimensional case.

In the semilinear case similar results hold when $a>0$; see~\cite{Fife-McLeod-1975,Kanel-1962,Uchiyama-1985}.  When $a=0$ one has to distinguish between the so called \emph{pushed} case, $c^* >
\sqrt{2h'(0)}$, for which we have also analogous results~\cite{Stokes-1976,Uchiyama-1985},  and  the more involved \emph{pulled} case, $c^*=\sqrt{2h'(0)}$, for which there is a logarithmic term (known as Bramson's correction term) in the description of the large-time behaviour of level sets even for $N=1$~\cite{Bramson-1983,Ducrot-2015,Gartner-1982,Hamel-Nolen-Roquejoffre-Ryzhik-2013,Roquejoffre-Rossi-RoussierMichon-2019,Rossi-2017}. In all cases the first term in this description had been given in~\cite{Aronson-Weinberger-1978}.

In the case of general doubly nonlinear operators, the first term of the asymptotic behaviour of the front was known both for  concave monostable nonlinearities~\cite{Audrito-Vazquez-2017} and bistable ones~\cite{Audrito-2019}. As for the existence of a logarithmic correction and the convergence to a travelling wave profile, there  are only precedents for the porous medium case. The first one~\cite{Biro-2002} deals with the particular reaction function $h(u)=u^p-u^q$, with $p< \min\{m,q\}$ in dimension $N=1$; see also~\cite{Kamin-Rosenau-2004} for the case $p=1$, $q=m$. More general reaction  nonlinearities, but still in dimension one, are considered in~\cite{Garriz-2020}. The only known results for higher dimensions come from~\cite{Du-Quiros-Zhou-Preprint}, but they are restricted to the logistic reaction nonlinearity. The present paper borrows some ideas from~\cite{Du-Quiros-Zhou-Preprint}. However,  there are  important differences, arising on the one hand from the extra degeneracy when the gradient vanishes, and on the other hand from the fact that we are considering much more general nonlinearities.

Let us remark that, in contrast with the semilinear case, in the slow diffusion regime problem~\eqref{eq:main} never has a logarithmic correction in dimension $N=1$. The same is true for the Stefan problem with reaction nonlinearities satisfying~\eqref{eq:reaction}--\eqref{eq:reaction_deriv_in_1},  another interesting free boundary problem for which results analogous to ours were proved in~\cite{Du-Matsuzawa-Zhou-2015}, with the extra nondegeneracy assumption $h'(0)>0$ if $a=0$.

When $N>1$, our proof requires on the one hand a careful (and quite involved) analysis of how convection affects  the velocity and the profile of the travelling  wave with a finite front, and on the other hand a bound for the \emph{flux} $|\nabla u^m|^{p-2}\nabla u^m$, which was only available for certain monostable nonlinearities and $p=2$. This bound, which has independent interest, is maybe the main technical novelty of the paper.

Another interesting technical difference arises in a preliminary estimate showing that spreading solutions approach~1 exponentially in compact sets when $p>2$. The proof for $p=2$ and logistic reaction nonlinearity given in~\cite{Du-Quiros-Zhou-Preprint} uses a result from~\cite{Du-Polacik-2015} for the semilinear case and a linearization argument. This idea does not work for $p>2$, and we have to develop a completely different proof based on a comparison argument with an explicit subsolution. This new approach depends in an essential way on the nature of the diffusion operator, and does not apply to the case $p=2$. When $p\in(1,2)$ spreading solutions are not expected to approach 1 exponentially, but at most in a power-like manner. This can be proved easily for many initial data by means of comparison with travelling waves; see the remark after Lemma~\ref{lem:behaviour.pressure.infinity} below.

\section{Preliminaries and main results}
\label{sect-prelim.main} \setcounter{equation}{0}

This section is devoted to give some preliminary results for both general and travelling wave solutions to~\eqref{eq:main}, and to describe the main  results of the paper and the key ideas behind them.

\subsection{Well-posedness}
The equation in~\eqref{eq:main} is degenerate either when $u=0$ or $\nabla u =0$, and does not have in general classical solutions.  A function $u$  is a \emph{weak solution} to problem~\eqref{eq:main} if $u,|\nabla u^m|^{p-1}, h(u)\in L^1_{\rm loc}(Q)$, and
\begin{equation}
\label{eq:weak.solution}
\int_{\mathbb{R}^N}u_0\varphi(\cdot,0)+\int_0^t\int_{\mathbb{R}^N}
\big(u\varphi_t-|\nabla u^m|^{p-2}\nabla u^m\cdot\nabla\varphi+h(u)\varphi\big)=0\quad\text{for each }\varphi\in C_{\rm c}^\infty(\overline Q).
\end{equation}
 If the equality in~\eqref{eq:weak.solution} is replaced by \lq\lq$\ge$'' (respectively, by \lq\lq$\le$'') for $\varphi\ge 0$, we have a subsolution (respectively, a supersolution).

Results about existence of weak solutions for the purely diffusive problem and its generalizations can be
found in the survey~\cite{Kalashnikov-1987} and the large number of references therein. As for the complete reaction-diffusion problem, since $h$ is locally Lipschitz, existence can be easily proved without much effort following the lines of what is done for the case $p=2$ in~\cite{Sacks-1983}, once one has an estimate for $|\nabla u^m|$ for a family of approximate problems, and using also H\"older regularity results from~\cite{Porzio-Vespri-1993,Vespri-1992} to obtain compactness. The solution obtained by this procedure is  continuous in $Q$.  The required estimate for $|\nabla u^m|$, which will also be used in the study of the asymptotic behaviour, is obtained in Section~\ref{subsect-bound.flux}.

Uniqueness can be proved as in~\cite{Li-2001}, and a comparison principle for sub- and supersolutions following the ideas in~\cite{diBenedetto-book,Wu-Zhao-Yin-Li-book,Zhao-Xu-SCh-1996}; see also~\cite{Alt-Luckhaus-1983}.

\subsection{Travelling waves}
\label{subsect:TWs}
The large time behaviour of spreading solutions will be given in terms of \emph{travelling wave} solutions. By this we mean solutions of the form $\bar u_c(x,t)=U_c(x-ct)$ for some \emph{speed}~$c$ and  \emph{profile}  $U_c$ (depending on $c$), which should satisfy $U_c\in C(\mathbb{R})$, $|(U^m_c)^\prime|^{p-1},h(U)\in L^1_{\rm loc}(\mathbb{R})$,  and
$$
\displaystyle\int_{\mathbb{R}} \left(|(U^m_c)^\prime|^{p-2}(U^m_c)^\prime\varphi^{\prime} +cU_c  \varphi^\prime - h(U_c)\varphi\right) =0 \quad\text{for all }\varphi \in C_{\rm c}^{\infty} (\mathbb{R}).
$$

Two travelling wave profiles will be regarded as the same if they are a translation of each other. A monotonic travelling wave profile is called a \emph{wavefront} (from 1 to 0) if it connects the two equilibrium states 1 and 0, that is,
$$
\lim\limits_{\xi \rightarrow - \infty} U_{c}(\xi) = 1,\qquad\lim\limits_{\xi \rightarrow  \infty} U_{c}(\xi) = 0.
$$
For this type of profiles it is clear that $U_c'\leq 0$ where $U_c$ is positive and hence smooth, and thus
$|(U^m_c)^\prime|^{p-2}(U^m_c)^\prime = -|(U_c^m)'|^{p-1}$.

If the reaction  term in~\eqref{eq:reaction} falls in the case $a=0$,  there exists a minimal speed $c^* = c^*(m,p,h)>0$ such that  equation~\eqref{eq:main}  has a unique  wavefron $U_c$ for all $c \geq c^*$, but none for $c < c^*$.

If $a>0$ there is a unique speed $c^*=c^*(m,p,h)$ for which the equation has a wavefront. The sign of this speed matches the sign of
 $\int_0^1 h(u)u^{m-1}\,\textrm{d}u$. Since we are interested in how the level $u=1$ invades the whole space,
 in the sequel we will always assume that this integral has positive sign.

\noindent\emph{Notation. } In what follows $\alpha:=1/(p-1)$. Note that in the slow diffusion regime $\alpha\in(0,m)$.

The wavefront  for the speed $c^*$ satisfies $U_{c^*}<1$ (this is guaranteed by condition~\eqref{eq:reaction_deriv_in_1}; see~\cite{Garriz-plap}) and is finite  (we are using  the slow diffusion regime assumption here);  that is, there exists a value~$\xi_0$ such that $U_{c^*}(\xi) =0$ for all $\xi \geq \xi_0$ and $U_{c^*}(\xi) >0$ for all $\xi < \xi_0$. Moreover, $U^\prime_{c^*}<0$ for all $\xi < \xi_0$, $|(U_{c^*}^m)'|^{p-1} \in C(\mathbb{R})$ and $(U_{c^*}^m)^\prime (\xi) \rightarrow 0$ as $\xi \rightarrow -\infty$. In addition,
\begin{equation}\label{profile:property}
\lim\limits_{\xi \rightarrow \xi_0^-} \left(\frac{m}{m-\alpha}U_{c^*}^{m-\alpha}\right)^{\prime}(\xi) = -(c^*)^\alpha.
\end{equation}

As mentioned above, when   $a=0$ there are also wavefronts for speeds $c>c^*$. However, they are not finite: they are positive in the whole $\mathbb{R}$. Let us remark that if $h$ is not smooth at the origin, this positivity result is not true in general~\cite[Section 10]{Audrito-Vazquez-2017}.

Further information about these results on wavefronts and many others can be found in~\cite{Gilding-Kersner-2004} for the case $p=2$ and in~\cite{Audrito-2019,Audrito-Vazquez-2017,Enguica-Gavioli-Sanchez-2013,Garriz-plap} for $p\neq 2$.

\noindent\emph{Notation. } From now on $U_{c^*}$ will denote the unique travelling wave  profile with speed $c^*$ and   $\xi_0=0$, that is, with support $\overline{\mathbb{R}_-}$.

\subsection{Spreading}

Our first task is to extend  the results of~\cite{Aronson-Weinberger-1978} concerning spreading from the semilinear case
to the whole slow diffusion regime. This is the content of Section~~\ref{sect-propagation.vs.vanish}.


Our first result shows that for all the reaction nonlinearities that we are considering there are initial data for which spreading happens.
\begin{Theorem}
Let~\eqref{eq:slow.regime} hold and $u$ be a nonnegative solution of \eqref{eq:main}.
There exists a three-parameter family of continuous, bounded and compactly supported functions $\mathfrak{u}_0(\cdot; \rho,c,\eta)$ (see Section~\ref{sect-propagation.vs.vanish} for a precise description) such that  if
$u(x,0) \geq \mathfrak{u}_0(x- x_0;\rho,c,\eta)$
for some $x_0 \in \mathbb{R}^N$, and admissible $\rho, c,\eta>0$, then $u$ converges to 1 uniformly on compact sets as $t\to\infty$.
\end{Theorem}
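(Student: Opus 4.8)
The plan is to construct the three-parameter family $\mathfrak{u}_0(\cdot;\rho,c,\eta)$ explicitly so that each member is a (generalized) subsolution of \eqref{eq:main} and then invoke the comparison principle together with a dynamical argument showing the subsolution expands and lifts to $1$. The natural building block is a compressed travelling-wave profile: for a speed $c>0$ with admissible smallness, set $V(x,t) = \eta\, U_{c}\big(|x| - ct - \rho\big)$ — or, when $a=0$ and we need finiteness, the finite profile $U_{c^*}$ rescaled in amplitude and in the spatial variable — and take $\mathfrak{u}_0(x;\rho,c,\eta) := V(x,0) = \eta\,U_c(|x|-\rho)$. The first step is to verify that, because the curvature term $(N-1)/|x|$ appearing when one plugs a radial profile into $\Delta_p u^m$ has a favourable sign on the support of the wave (the profile is nonincreasing), and because $U_c$ solves the one-dimensional travelling-wave equation, $V$ is a subsolution provided $c$ is chosen small enough (depending on $\rho$ through the curvature contribution $(N-1)/\rho$) and $\eta$ is chosen so that $h(\eta U_c(\xi)) \ge \eta\,h(U_c(\xi))$-type inequalities hold on the relevant range; here one uses $h(0)=0$, $h>0$ on $(a,1)$ and, for the monostable subtlety, that the relevant portion of the profile stays in $(a,1)$. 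One must also deal with the lack of smoothness of $V$ at the free boundary and at $|x|=0$; the standard device is to cut the profile off below a small height or to use the weak formulation \eqref{eq:weak.solution} directly, checking that the distributional inequality picks up a nonnegative (inward) jump in the flux at the front.

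The second step is the comparison principle: since $u(x,0)\ge \mathfrak{u}_0(x-x_0;\rho,c,\eta) = V(x-x_0,0)$, we get $u(x,t)\ge V(x-x_0,t) = \eta\,U_c(|x-x_0|-ct-\rho)$ for all $t>0$. This already shows the support of $u$ invades all of $\mathbb{R}^N$, but not yet that $u\to 1$; to upgrade, the third step is a bootstrap. Having $u$ bounded below by a wave of amplitude $\eta$ moving outward, we restart the comparison at a later time with an enlarged ball on which $u\ge\eta-\varepsilon$, and build a new subsolution of the same type but with a larger amplitude — exploiting that once the solution sits above $a$ on a large ball, the reaction $h>0$ pushes it up, and the ODE $\dot{s}=h(s)$ drags any such plateau toward $1$. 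Iterating (or, more slickly, using a single sub/supersolution pair trapping $u$ between $\eta U_c$ and $1$ and a sliding argument à la Aronson–Weinberger) yields $\liminf_{t\to\infty} u(x,t)\ge 1$ uniformly on compact sets; the matching upper bound $u\le 1$ (hence $\limsup\le 1$) follows from comparison with the spatially constant supersolution solving $\dot{s}=h(s)$, $s(0)=\|u_0\|_\infty$, together with $h<0$ on $(1,\infty)$.

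The precise description of the admissible parameter range — which $(\rho,c,\eta)$ make $\mathfrak{u}_0$ genuinely a subsolution — is exactly where the three cases $a=0$ (monostable), $a>0$ bistable, and combustion diverge, and I expect this to be the main obstacle. In the monostable case one can take $\eta$ close to $1$ and $c$ close to $c^*$ and the construction is cleanest. In the bistable and combustion cases the initial plateau must exceed the threshold $a$ on a sufficiently large ball for the reaction to have the right sign — so $\eta$ cannot be taken small, and $\rho$ must be large; the subsolution must be engineered (height at least slightly above $a$, support radius large compared to the curvature scale and to the intrinsic length on which $h$ "turns on") so that the ODE comparison $\dot s = h(s)$ still escapes the basin of the unstable equilibrium $a$. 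Making these compatibility conditions explicit and checking that the resulting $\mathfrak{u}_0$ is indeed a subsolution — handling the curvature term, the free-boundary flux jump, and the nonlinear reaction inequality simultaneously — is the technical heart of the argument; everything else is comparison and a routine ODE-trapping dynamical argument.
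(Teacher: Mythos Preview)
Your overall strategy---build an expanding radial subsolution, compare, then push to $1$---is the right one, but the building block you propose does not work, and this is a genuine gap rather than a detail.

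You suggest taking $V(x,t)=\eta\,U_c(|x|-ct-\rho)$ with $U_c$ a wavefront profile.  Two problems.  First, for $c<c^*$ there is no wavefront $U_c$ at all (in the monostable case wavefronts exist only for $c\ge c^*$, and in the bistable/combustion cases only for $c=c^*$), so the object you want to use is undefined precisely in the speed range you need.  Second, even granting some profile $U_c$, the function $\eta U_c$ does not inherit any useful differential inequality: plugging it into $\mathcal L$ produces a reaction discrepancy $\eta^{m(p-1)}h(U_c)-h(\eta U_c)$ that you cannot sign in general.  For bistable or combustion $h$, near the edge of the profile $\eta U_c$ drops below $a$ and $h(\eta U_c)\le 0$, so the inequality you need fails outright; your own remark that ``the relevant portion of the profile stays in $(a,1)$'' cannot be arranged, since any continuous compactly supported profile must cross $[0,a]$.

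The paper sidesteps both issues by using a different phase-plane object.  For $\gamma=(N-1)/\rho$ small and $c\in(0,c(\gamma))$, one takes not a wavefront but the trajectory of the first-order system through the point $(\eta,0)$ with $\eta\in(\beta_c,1)$; this trajectory exits through $(0,\nu)$ with $\nu>0$ and yields a finite profile $U^{c,\eta}$ on $[0,b]$ satisfying
\[
c(U^{c,\eta})' + \big(|((U^{c,\eta})^m)'|^{p-1}\big)' + \gamma|((U^{c,\eta})^m)'|^{p-1} + h(U^{c,\eta})=0,
\]
with $U^{c,\eta}(0)=\eta$, zero flux at $0$, $U^{c,\eta}(b)=0$, and strictly negative flux at $b$.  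One then sets $\mathfrak u_0$ equal to $\eta$ on $\{|x|\le\rho\}$, to $U^{c,\eta}(|x|-\rho)$ on the annulus, and to $0$ outside.  Because the convection coefficient $\gamma$ is built into the profile equation and $|x|\ge\rho$ on the decay region, the curvature term $(N-1)/|x|\le\gamma$ has the right sign automatically; no reaction inequality of the type $h(\eta U)\ge\eta h(U)$ is ever needed.  The flux is continuous at $|x|=\rho$ (it vanishes there) and has an inward jump at $|x|=\rho+b$, so the piecewise function is a genuine weak subsolution.  Finally, since $W(x,t):=\mathfrak U(|x|-c_1 t)$ with $c_1<c$ is itself a subsolution and $W(\cdot,h)\ge W(\cdot,0)=\mathfrak u(\cdot,0)$, comparison gives $\mathfrak u(\cdot,t+h)\ge\mathfrak u(\cdot,t)$, i.e.\ monotonicity in $t$; the limit $\tau(x)\in[\eta,1]$ is then identified as $1$ by a standard argument.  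This replaces your iterative bootstrap by a single clean monotonicity step.
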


It turns out that for certain monostable nonlinearities $h$ nontrivial solutions always spread, independently of the initial datum. Following~\cite{Aronson-Weinberger-1978}, this phenomenon is named as the \textit{hair-trigger effect}.
\begin{Theorem}
\label{thm:spreading.vs.vanishing}
Let \eqref{eq:slow.regime} hold and let $h$ satisfy~\eqref{eq:reaction} with $a=0$ and
$$
\displaystyle\liminf\limits_{u \rightarrow 0} \frac{h(u)}{u^{m(p-1) + p/N}} > 0.
$$
If $u$ is a nontrivial nonnegative solution of \eqref{eq:main}, then $u$ converges to 1 uniformly on compact sets  as $t\to\infty$.
\end{Theorem}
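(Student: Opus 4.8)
The plan is to combine a pair of comparison arguments: one to show that the solution becomes bounded below by a small positive constant on an arbitrarily large ball after a finite time, and another to propagate that positivity outward so as to conquer all of $\mathbb{R}^N$. Since by Theorem~2.1 there is a three-parameter family of sub-data $\mathfrak{u}_0(\cdot;\rho,c,\eta)$ whose solutions spread, it suffices to show that any nontrivial nonnegative solution $u$ eventually dominates a translate of some admissible $\mathfrak{u}_0$; then the comparison principle finishes the argument. So the real content is: starting from an arbitrarily tiny ``seed'' of positivity, the hypothesis $\liminf_{u\to0} h(u)/u^{m(p-1)+p/N}>0$ forces the solution to grow and to fill out a large ball on which it is bounded below by a fixed constant.

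First I would localize. Since $u\not\equiv 0$ and $u$ is continuous, after an initial waiting time $t_1>0$ there is a ball $B_{\delta}(x_1)$ on which $u(\cdot,t_1)\ge \varepsilon_0>0$ for some small $\varepsilon_0,\delta$; this uses the strong maximum principle / positivity-expansion property for the doubly nonlinear equation (the support cannot instantaneously have empty interior once a solution becomes positive somewhere, by Gilding--Kersner-type results). We may then take $u(\cdot,t_1)$ as a new initial datum. The key step is the construction of an explicit compactly supported subsolution of the form
$$
\underline u(x,t)= \big(A(t)\big)^{?}\,\Big(1-\tfrac{|x|^2}{R(t)^2}\Big)_+^{\gamma},
$$
i.e. a Barenblatt-type self-similar profile for the doubly nonlinear operator $\partial_t v=\Delta_p v^m$, modified to grow in amplitude because of the reaction. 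One picks the free-boundary exponent $\gamma$ so that $(1-|x|^2/R^2)_+^\gamma$ is a genuine subsolution of the pure diffusion equation (so $\gamma=\tfrac{p}{m(p-1)-1}\cdot\tfrac{1}{p-1}$ up to the usual computation), and then checks that on the part of the support where $\underline u$ is small, the reaction term $h(\underline u)\ge c_0\,\underline u^{\,m(p-1)+p/N}$ dominates the (negative) diffusion error; the exponent $m(p-1)+p/N$ is exactly the Fujita-type critical exponent for this operator, which is why the amplitude ODE for $A(t)$ can be made to grow rather than to decay, and the support radius $R(t)$ stays bounded below (in fact increasing). This yields, after some finite time $t_2$, a lower bound $u(\cdot,t_2)\ge \varepsilon_1$ on a ball of radius $\rho$ as large as we please — precisely the kind of sub-datum appearing in Theorem~2.1.

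Once a large enough ball with a fixed positive floor is secured, one applies Theorem~2.1 (with suitable admissible $\rho,c,\eta$, enlarging $\rho$ if necessary so that $\mathfrak u_0(\cdot;\rho,c,\eta)\le \varepsilon_1 \mathbf 1_{B_\rho}$) together with the comparison principle to conclude $u\to 1$ locally uniformly. The main obstacle, I expect, is the subsolution construction in the critical regime: making the amplitude-versus-support ODE system close requires a delicate matching between the diffusion scaling exponents and the power $m(p-1)+p/N$, and one must control the error near the moving free boundary, where $\underline u^{m}$ is only H\"older and $|\nabla \underline u^m|^{p-2}$ may degenerate or (for $p<2$) blow up; handling that boundary layer — rather than the bulk estimate, which is a routine Bernoulli-type ODE comparison — is where the care goes. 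A secondary technical point is justifying the instantaneous-positivity step rigorously for weak solutions of the doubly nonlinear equation, but this is available in the literature cited (\cite{Gilding-Kersner-1996}).
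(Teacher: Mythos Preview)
Your overall strategy---get the solution above some admissible $\mathfrak u_0(\cdot;\rho,c,\eta)$ and then invoke Theorem~2.1---is sound, and is indeed how the paper closes the argument. But the route you take to manufacture that lower barrier is genuinely different from the paper's, and somewhat harder.

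The paper does \emph{not} build a time-dependent Barenblatt-type subsolution with growing amplitude. Instead it proceeds in two short steps modelled on Aronson--Weinberger. First (Lemma~\ref{lema11}), it compares $u$ with a solution of the pure Fujita problem $u_t=\Delta_p u^m+ku^{q_F}$ and invokes the Galaktionov--Levine blow-up result as a black box to conclude $\limsup_{t\to\infty}\|u(\cdot,t)\|_\infty=1$. Second (Lemma~\ref{lema13}), it checks by a direct one-line computation that a suitably chosen profile $\underline u(x)=\delta(1-|x|^\sigma)_+^\beta$ is a \emph{stationary} subsolution for a modified reaction $\hat h$ equal to $ku^{q_F}$ near $0$; hence the solution $\hat u$ with this initial datum is nondecreasing in $t$, and since $\hat u(0,t)$ has a limit, Lemma~\ref{lema11} forces that limit to be $1$. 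This completely sidesteps the amplitude/support ODE matching and the free-boundary layer issue you flag: the profile is frozen, and all the ``growth'' is carried by monotonicity plus the $\limsup$ lemma.

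Your growing-amplitude subsolution is essentially an attempt to reprove the Fujita blow-up result by hand, and it can be made to work, but note two points you will need to address. First, the admissible heights $\eta$ in $\mathfrak u_0(\cdot;\rho,c,\eta)$ lie in $(\beta_c,1)$, and $\beta_c$ is typically positive even in the monostable case; so ``$u\ge\varepsilon_1$ on a large ball'' with $\varepsilon_1$ small is not enough---you need your amplitude $A(t)$ to climb past a \emph{fixed} threshold in $(0,1)$ before the subsolution comparison with $\mathfrak u_0$ becomes available, and once $A(t)$ leaves the regime where $h(u)\gtrsim u^{q_F}$ your ODE for $A$ changes character. Second, the boundary-layer control you worry about is exactly the place where the stationary-subsolution trick wins: choosing $\sigma,\beta$ large makes $\Delta_p\underline u^m$ continuous up to the edge of the support with an explicit sign, and no matching is required.
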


Let us remark that if $h(u)\leq ku^q$, with $q> q_F:=m(p-1)+ p/N$ and $k>0$, comparison with the problem with reaction term $ku^q$ shows that for certain small initial data  solutions asymptotically vanish; see for instance~\cite{Galaktionov-Levine-1998}. The critical exponent $q_F$ is known as  the \emph{Fujita} exponent.

When both spreading and vanishing are possible, threshold behaviours, different from spreading and vanishing, may occur; see~\cite{Du-Matano-2010,Muratov-Zhong-2017,Polacik-2011} for the semilinear case. We do not pursue this interesting subject here.

Next we ask ourselves how fast is the spreading. If we move from a point $x_0\in \mathbb{R}^N$ in a certain direction with a slow speed $c$, in the limit $t\to\infty$ we will see only the value $u=1$, while if $c$ is too big we will surpass the free boundary of our solution, and thus we will only see the value $u=0$. It turns out that, with $c^*$ given above,  the former happens when $c<c^*$, while the latter occurs when $c>c^*$. In this sense, the speed $c^*$ is called the \emph{critical speed} or the \emph{spreading speed} of the problem.

\begin{Theorem}\label{thm:spreading}
Let~\eqref{eq:slow.regime} hold  and $u$ be a  nonnegative solution of \eqref{eq:main}.

\noindent\emph{(i)} If the initial datum is bounded and compactly supported,  given any $x_0\in\mathbb{R}^N$ and $c>c^*$ there is a value $T$ such that
$
u(x,t) = 0$ for $|x - x_0| \geq ct$, $t\ge T$.

\noindent\emph{(ii)}
If spreading happens, for any $c \in (0,c^*)$ we have
$\lim\limits_{t \rightarrow \infty} \min\limits_{|y - y_0| \leq ct} u(y,t) = 1$.
\end{Theorem}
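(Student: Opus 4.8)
The two parts are proved by comparison with suitable barriers. For part (i), the plan is to construct a compactly supported supersolution whose support expands with speed $c'$ for any chosen $c'\in(c^*,c)$, and to dominate the initial datum by a translate of it. A natural candidate is built from the finite travelling wave: take $W(x,t):=U_{c'}^{\mathrm{fin}}(|x-x_0|-c't-R)$ where $U_{c'}^{\mathrm{fin}}$ is a finite travelling-wave-type profile for speed $c'$, suitably capped at height $1$ (or at $\|u_0\|_\infty$ if that is larger, using that $h\le 0$ above $1$ so constants above $1$ are supersolutions). Here one must check that radial symmetrization does not destroy the supersolution property: since $\Delta_p$ applied to a radial decreasing function picks up a favorable curvature term $-(N-1)|\partial_r w|^{p-2}\partial_r w/r\ge 0$ in the region where $w$ is decreasing, the one-dimensional finite travelling wave for speed $c'$ is in fact a (radial) supersolution in $\mathbb{R}^N$ for $t$ large, once the front radius is large enough that lower-order corrections are controlled; alternatively one can thicken the speed slightly to absorb the $1/r$ term. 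Choosing $R$ large enough that $u_0(x)\le W(x,0)$, which is possible because $u_0$ is compactly supported and bounded while $W(\cdot,0)$ equals (at least) $\|u_0\|_\infty$ on a large ball, the comparison principle gives $u\le W$, hence $u(x,t)=0$ whenever $|x-x_0|\ge c't+R$; since $c'<c$ this yields $u(x,t)=0$ for $|x-x_0|\ge ct$ and $t\ge T$ with $T$ depending only on $R,c,c'$.

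For part (ii), the idea is dual: construct a small, compactly supported subsolution that grows and whose support expands with any prescribed speed $c''\in(c,c^*)$. Since spreading is assumed, by definition $u(\cdot,t)\to1$ uniformly on compact sets, so given $\varepsilon>0$ we can find a time $t_0$ and a ball $B_\rho(y_0)$ on which $u(\cdot,t_0)\ge \mathfrak{u}_0(\cdot-y_0;\rho,c'',\eta)$ for an admissible choice of parameters — this is exactly the three-parameter family from Theorem 2.3 whose defining property (established in Section \ref{sect-propagation.vs.vanish}) is that it generates a subsolution expanding with speed close to $c''$ and rising to $1$. Actually it is cleaner to invoke directly the subsolution used to prove Theorem 2.3: it is supported in an expanding ball of radius $\sim c''t$ and converges to $1$ uniformly on compact sets, so comparison from time $t_0$ gives $u(y,t)\ge$ (that subsolution) and therefore $\liminf_{t\to\infty}\min_{|y-y_0|\le ct}u(y,t)\ge 1-\varepsilon$ for every $\varepsilon$; combined with the trivial upper bound $u\le \max\{1,\|u_0\|_\infty\}$ and a separate argument that $u$ cannot stabilize above $1$ on compacts (again from $h<0$ on $(1,\infty)$ and a constant supersolution decaying to $1$), we get the limit equals $1$. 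The role of $y_0$ versus the origin is handled by translation invariance of the equation together with the fact that the expanding-ball subsolution, recentred at $y_0$, still eventually covers every ball $B_{ct}(y_0)$ since $c<c''$.

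The main obstacle is the radial supersolution construction in part (i): verifying that a finite travelling wave of a slightly supercritical speed remains a supersolution after passing to the radial variable in $\mathbb{R}^N$, including a careful treatment at the free boundary where $|\nabla u^m|^{p-2}\nabla u^m$ must be interpreted in the weak sense and the curvature term $-(N-1)r^{-1}|\partial_r w|^{p-2}\partial_r w$ must be shown to have the right sign and not to be singular as $r\to 0$ (irrelevant here, since the front sits at large $r$) nor to overwhelm the reaction term near the front. One expects to handle this either by the favorable sign of the curvature term where the profile is monotone decreasing, or, more robustly, by allowing an additional arbitrarily small increment of speed so that the strict inequality $c'>c^*$ leaves room to absorb the $O(1/r)$ error for all $r$ beyond some $r_*$, and then choosing the initial translation $R$ large enough that the front starts beyond $r_*$. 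The subsolution side of part (ii) is comparatively routine given Theorem 2.3, since the needed expanding subsolution has essentially been built there.
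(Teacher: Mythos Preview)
Your plan for part~(ii) matches the paper's: once spreading is assumed, place the expanding subsolution of Lemma~\ref{lema_propagation} below $u$ at a large time and combine with the flat supersolution decaying to $1$ from above.

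For part~(i) there is a genuine gap. The object $U_{c'}^{\mathrm{fin}}$ you invoke does not exist for any $c'>c^*$: as recalled in Subsection~\ref{subsect:TWs}, in the monostable case all wavefronts with speed $c>c^*$ are strictly positive on the whole real line, while in the bistable and combustion cases $c^*$ is the \emph{unique} wavefront speed. So the radial barrier $U_{c'}^{\mathrm{fin}}(|x-x_0|-c't-R)$ cannot be built this way, and your (correct) observation about the favourable sign of the curvature term for radially nonincreasing supersolutions is then moot. A secondary issue is the capping step: the profile $U_{c^*}$ satisfies $U_{c^*}<1$ everywhere, so without further work it cannot dominate initial data with $\|u_0\|_\infty\ge 1$, and gluing it to a constant at level $\|u_0\|_\infty$ does not obviously preserve the supersolution property across the junction.

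The paper's remedy avoids both problems. First compare with a spatially constant supersolution to ensure $\|u(\cdot,t)\|_{L^\infty}<1+\delta$ for all large $t$. Then use the \emph{planar} supersolutions $w(x,t;\nu)=f(t)\,U_{c^*}(x\cdot\nu-g(t))$ of Lemma~\ref{lema2}, with $f_0\in(1,1+\delta)$: these are built from the \emph{critical} finite profile $U_{c^*}$ (not a nonexistent supercritical one), and by Lemma~\ref{lema1}(f) their front sits at $g(t)=c^*t+O(1)$. Since $w$ depends on $x$ only through $x\cdot\nu$, no curvature term arises at all. After a translation one places $w(\cdot,\cdot;\nu)$ above $u$ for every unit vector $\nu$ and intersects over $\nu$, obtaining $u(x,t)=0$ for $|x-x_0|\ge c^*t+O(1)$, hence for $|x-x_0|\ge ct$ once $t$ is large enough.
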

This question has been recently analyzed in~\cite{Audrito-2019,Audrito-Vazquez-2017} for bistable and concave monostable reactions.

\subsection{Convergence to travelling waves}
The main goal of the paper is to obtain a precise description of the asymptotic behaviour of nonnegative radial spreading solutions to~\eqref{eq:main} with bounded and compactly supported initial data. In particular we will give a sharp description of their  positivity set.  As already mentioned, this behaviour will be given in terms of the wavefront with critical speed $c^*$. Our results can be used to significantly sharpen Theorem~\ref{thm:spreading}, revealing the precise logarithmic shift of the free boundary (and all other level sets).  This part  is restricted to parameters in the slow diffusion regime satisfying additionally $p\ge 2$.

Given a radially symmetric and compactly supported nonnegative initial function $u_0\not\equiv 0$, the solution of \eqref{eq:main} is also radial and compactly supported for any positive time, and satisfies the equation
\begin{equation}
\label{eq:radial.equation}
u_t=\big(|(u^m)_r|^{p-2}(u^m)_r\big)_r+\frac{N-1}{r}|(u^m)_r|^{p-2}(u^m)_r+h(u),
\end{equation}
where $r=|x|$  and, abusing notation, $u=u(r,t)$. Let
\begin{equation*}
\label{eq:def.h}
\eta(t)=\inf\{r>0: u(x,t)=0\mbox{ if }|x|>r\}.
\end{equation*}
It is easy to show that after some finite time $T$ the spatial support of $u(\cdot, t)$ for any later time is a ball of radius $\eta(t)$: $u(x,t)>0$ if $|x|<\eta(t)$, and $u(x,t)=0$ if $|x|\ge \eta(t)$ for all $t\ge T$.

From Theorem~\ref{thm:spreading} we  see that if $u$ spreads, then $\lim\limits_{t\to\infty}\frac{\eta(t)}t= c^*$. Thus, close to the free boundary,
\[
\frac{N-1}{r}\approx \gamma(t):=\frac{N-1}{c^*t},
\]
and $u$ solves approximately the nonlinear reaction-diffusion-convection equation
\begin{equation*}\label{eq:convection}
u_t=\big(|(u^m)_r|^{p-2}(u^m)_r\big)_r+\gamma(t)|(u^m)_r|^{p-2}(u^m)_r+h(u).
\end{equation*}
As we will see in Subsection~\ref{subsect-wavefronts.convection}, for every small \emph{constant} $\gamma\ge0$ there is a unique speed $c(\gamma)$  (depending also on $m$, $p$, and $h$, of course), such that the equation
\begin{equation}\label{eq:constant.convection}
u_t=\big(|(u^m)_r|^{p-2}(u^m)_r\big)_r+\gamma|(u^m)_r|^{p-2}(u^m)_r+h(u)
\end{equation}
has a finite wavefront. This wavefront is unique up to translations; see also~\cite{Gilding-Kersner-2004,Malaguti-Ruggerini-2010} for the porous medium case.

\noindent\emph{Notation. } The unique wavefront for~\eqref{eq:constant.convection} with speed $c(\gamma)$ and   $\xi_0=0$, that is, with support $\overline{\mathbb{R}_-}$, will be denoted by $U(\cdot;\gamma)$ from now on.

We will prove in Subsection~\ref{subsect-dependence.convection} that $c(\gamma)$ is smooth close to the origin, and we conjecture that
$$
\eta'(t)\approx c(\gamma(t))\approx c(0)+c'(0)\frac{N-1}{c^*t}
$$
for large times.
Since $c(0)=c^*$, we expect that
$$
\eta(t)\approx c^*t -(N-1)c_\#\log t \quad\text{as }t\to\infty,\text with  c_\#=-\frac{c'(0)}{c^*}>0.
$$
We will also prove in Subsection~\ref{subsect-dependence.convection} that  $U(\cdot;\gamma)$ approaches $U_{c^*}$ as $\gamma\to0^+$. Hence, we expect spreading solutions to approach $U_{ c^*}$ in a suitable moving coordinate system (moving as the free boundary) as $t\to\infty$. We will show that this is indeed the case  if the initial data are radially symmetric, bounded and compactly supported.
\begin{Theorem}
\label{thm:convergence.compact.support}
Assume~\eqref{eq:slow.regime} and $p\geq 2$. Let $u$ be a nonnegative spreading solution of~\eqref{eq:main} corresponding to a bounded, radially symmetric and compactly supported initial datum $u_0$, and let $\eta(t)$ be the function describing its interface. Then there is a constant $r_0$ such that
$$
\begin{cases}
\lim\limits_{t \rightarrow \infty} \sup\limits_{r\geq 0} |u(r,t) - U_{c^*}(r-c^*t+(N-1)c_\sharp\log t - r_0)| = 0, \\[8pt]
\lim\limits_{t\to\infty}\big[\eta(t)-c^*t + (N-1)c_\#\log t\big]=r_0.
\end{cases}
$$
\end{Theorem}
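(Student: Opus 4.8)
\emph{Proof strategy.} The plan is to trap $u$, for all large times, between two space--translates of the convection--adapted finite wavefront $U(\cdot;\gamma(t))$, with $\gamma(t)=\frac{N-1}{c^*t}$, and to read off the precise shift $r_0$ from the accumulated effect of the convection over $[t_0,\infty)$ as $t_0\to\infty$. The ingredients I would use are: the rough spreading rate $\eta(t)/t\to c^*$ from Theorem~\ref{thm:spreading}; the standard a priori bounds $0\le u\le C$ and $\limsup_{t\to\infty}\sup_x u(\cdot,t)\le1$ (since $h<0$ above $1$), together with interior regularity of $u$ where it is positive; the flux bound $|(u^m)_r|^{p-2}(u^m)_r\in L^\infty(Q)$ from Section~\ref{subsect-bound.flux}, which in addition keeps $|(u^m)_r|$ near the interface close to the value prescribed by~\eqref{profile:property}; the (at least) power--like convergence $u\to1$ on compact sets (cf.\ the remark after Lemma~\ref{lem:behaviour.pressure.infinity}); and, from Subsection~\ref{subsect-dependence.convection}, the $C^1$ expansion $c(\gamma)=c^*+c'(0)\gamma+O(\gamma^2)$ near $\gamma=0^+$ with $c'(0)=-c^*c_\#<0$, the uniform convergence $U(\cdot;\gamma)\to U_{c^*}$, and boundedness of $\partial_\gamma U$.

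\emph{The barriers.} Near the interface $r\approx\eta(t)\approx c^*t$, so $\frac{N-1}{r}=\gamma(t)\big(1+O(1/t)\big)$; since $(u^m)_r$ is bounded there, the radial convection term in~\eqref{eq:radial.equation} differs from $\gamma(t)\,|(u^m)_r|^{p-2}(u^m)_r$ only by a time--integrable $O(1/t^2)$ quantity. I would then check that $\overline{u}(r,t):=U\big(r-\overline{s}(t);\gamma(t)\big)$ is a weak supersolution of~\eqref{eq:radial.equation}, where $\overline{s}$ solves $\overline{s}'(t)=c(\gamma(t))+e(t)$ with $e(t)=O(1/t^2)$ absorbing both that mismatch and the term $\gamma'(t)\,\partial_\gamma U$ coming from the time dependence of $\gamma$; beyond its finite front $\overline{u}\equiv0$, and the front itself is handled as for the wavefront, using~\eqref{profile:property} (after a harmless modification of the profile near the front if needed). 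Starting the comparison at a large time $t_0$ with $\overline{s}(t_0)$ slightly above $\eta(t_0)$ so that $\overline{u}(\cdot,t_0)\ge u(\cdot,t_0)$, the comparison principle gives $u\le\overline{u}$ for $t\ge t_0$; integrating, and using $\int_{t_0}^\infty\gamma(\tau)^2\,d\tau\to0$, one obtains
\[
\overline{s}(t)=\overline{s}(t_0)+c^*(t-t_0)-(N-1)c_\#\log(t/t_0)+\varepsilon_{t_0}(t),
\]
with $\sup_{t\ge t_0}|\varepsilon_{t_0}(t)|\to0$ as $t_0\to\infty$. A symmetric construction produces a subsolution $\underline{u}(r,t)=U\big(r-\underline{s}(t);\gamma(t)\big)$ obeying the analogous identity for $\underline{s}$; here the delicate point is the back of the wave, where $u$ is near $1$, which I would treat with a slightly depressed (or truncated) wavefront, absorbing the defect into $\underline{s}'$ and invoking $u\to1$ on compacts. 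Thus, for $t\ge t_0$, $u$ is squeezed between two translates of $U(\cdot;\gamma(t))$ whose finite fronts are $o(1)$ apart, and $u$ is nondegenerate near its own interface.

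\emph{Passing to the limit.} Put $\rho(t):=\eta(t)-c^*t+(N-1)c_\#\log t$; the barriers show $\rho$ is bounded. Since the front of $\overline{u}$ sits at $\overline{s}(t)\ge\eta(t)$ and $\overline{s}(t_0)$ can be taken within any $\varepsilon$ of $\eta(t_0)$, the displayed identity gives $\limsup_{t}\rho(t)\le\rho(t_0)+\varepsilon+o_{t_0}(1)$ for every large $t_0$; choosing $t_0\to\infty$ along a sequence realizing $\liminf\rho$, then $\varepsilon\to0$, forces $\limsup\rho=\liminf\rho=:r_0$, and the subsolution gives the matching control. So $\rho(t)\to r_0$, which is the second assertion (using nondegeneracy of $u$ at its interface). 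For the first: on $\{|r-\eta(t)|\le M\}$ the squeezing and $U(\cdot;\gamma(t))\to U_{c^*}$ yield $|u(r,t)-U_{c^*}(r-\eta(t))|\to0$; for $r\le\eta(t)-M$ the subsolution forces $u\ge1-\omega(M)$ while also $U_{c^*}(r-\eta(t))\ge1-\omega(M)$ with $\omega(M)\to0$; combining over the two overlapping ranges, $\sup_{r\ge0}|u(r,t)-U_{c^*}(r-\eta(t))|\to0$, and replacing $\eta(t)$ by $c^*t-(N-1)c_\#\log t-r_0$ costs a further $o(1)$ by uniform continuity of $U_{c^*}$ and $\rho(t)\to r_0$.

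\emph{Main difficulty.} I expect the barrier step to be the crux: verifying that the translated profile $U(\cdot;\gamma(t))$ is a genuine weak sub/supersolution of the radial equation. This requires controlling, uniformly and with time--integrable size, the mismatch between $\frac{N-1}{r}$ and $\gamma(t)$ near the moving front --- which is exactly where the flux bound of Section~\ref{subsect-bound.flux} and the precise front slope~\eqref{profile:property} are needed --- and the extra term $\gamma'(t)\,\partial_\gamma U$, as well as handling the degeneracy at the barriers' own free boundaries and the genuinely delicate region at the back of the subsolution. The smooth dependence of $c(\gamma)$ on $\gamma$ near $0$, established in Subsection~\ref{subsect-dependence.convection}, is precisely what turns these local barriers into the sharp logarithmic correction.
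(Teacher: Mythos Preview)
Your barrier construction captures the right idea---the paper likewise uses translates of the convection-adapted profile $U(\cdot;\gamma)$ with a time-dependent $\gamma$ to obtain the sharp logarithmic bound on $\eta(t)$. However, two ingredients you treat lightly are in fact essential in the paper. First, the barriers there carry a multiplicative modulation $f(t)\to1$ (as in Subsection~\ref{subsect-one.dimensional}), and for $p>2$ this only works once one has \emph{exponential} convergence of $u$ to $1$ on expanding balls (Lemma~\ref{lemma:exponential-convergence-to-1}); your ``at least power-like'' convergence is not enough to close the comparison at the back of the wave. Second, the paper first establishes a preliminary rough bound $\eta(t)\ge c^*t-M\log t$ (Lemma~\ref{lem:first.log.correction}) so that the comparison region $r\ge c^*t-M\log t$ is available when proving the sharp estimate; you implicitly assume this.

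The genuine gap is in your ``passing to the limit'' step. You assert that the supersolution can be initialised with its front $\overline{s}(t_0)$ within any $\varepsilon$ of $\eta(t_0)$, and likewise for the subsolution, so that the squeeze is $o(1)$-tight. But arranging $\overline{u}(\cdot,t_0)\ge u(\cdot,t_0)$ with $\overline{s}(t_0)=\eta(t_0)+\varepsilon$ requires already knowing that $u(\cdot,t_0)$ has (nearly) the wavefront shape near its front; otherwise $u$ could exceed the translated profile somewhere behind the front and force $\overline{s}(t_0)$ much farther right. So the argument is circular. The paper avoids this by a different route: the barrier comparison only yields the two-sided bound $|\eta(t)-c^*t+(N-1)c_\#\log t|\le C$; the identification of $r_0$ and the uniform convergence to $U_{c^*}$ then come from a compactness argument in moving coordinates combined with a Liouville-type classification of \emph{eternal} solutions (Theorem~\ref{tma:eternal}), which shows that any subsequential limit must be a single translate of $U_{c^*}$, followed by a separate argument upgrading subsequential to full-time convergence. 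Your proposal lacks this Liouville/compactness step, and the direct $\limsup/\liminf$ trick you outline does not substitute for it.
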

These results,  which are proved in Section~\ref{sect-convergence.compact.support},  can be slightly improved in the one-dimensional case to include  on the one hand non-symmetric initial data, and on the other hand  bounded initial data with finite support only in one direction; see Subsection~\ref{subsect-one.dimension}.

The proof uses a bound for $|\nabla u^m|$, obtained in Section~\ref{sect-bound.flux}, and comparison with sub- and supersolutions  of  problem~\eqref{eq:convection} constructed from wavefronts. The argument requires having a thorough knowledge of how $c(\gamma)$ and the associated wavefront constructed in Subsection~\ref{subsect-wavefronts.convection} vary with the convection parameter $\gamma$. This analysis is performed in Subsection~\ref{subsect-dependence.convection}.

Theorem~\ref{thm:convergence.compact.support} combined with a comparison argument allows to show easily that the free boundary (and other level sets) of non-radial spreading solutions to~\eqref{eq:main} is always within $O(1)$ distance from the moving sphere $|x|= c_*t-(N-1)c_\#\log t$, thus improving  Theorem~\ref{thm:spreading}.

\begin{Corollary}\label{cor:1}

Assume~\eqref{eq:slow.regime} and $p\geq 2$. Let $u$ be a spreading solution to~\eqref{eq:main} with a bounded, nonnegative and compactly supported initial datum.
	\begin{itemize}
		\item[(i)]  For any $t>0$, let  $\Gamma(t):=\partial\{x\in\mathbb{R}^N:u(x,t)>0\}$.  There exist $r_1, r_2\in\mathbb R$ and $T>0$ such that
	\[
	\Gamma(t)\subset
	\big\{x\in\mathbb R^N: r_1\leq |x|-c_* t+(N-1)c_\# \log t\leq r_2\big\}\quad\text{for all }t\ge T.
	\]
	\item[(ii)] For any $l\in (0,1)$ and $t>0$, let $E_l(t):=\{x\in\mathbb R^N: u(x, t)=l\}$. There exist $r^l_1, r^l_2\in\mathbb R$ and $T>0$  such that
	\[
	E_l(t)\subset
	\big\{x\in\mathbb R^N: r^l_1\leq |x|-c_* t+(N-1)c_\# \log t\leq r^l_2\big\}\quad\text{for all }t\ge T.
	\]
	\item[(iii)]
	$\displaystyle\lim_{t\to\infty}u(x,t)=1 \mbox{ uniformly in } \big\{|x|\leq c_*t-(N-1)c\log t\big\} \text{ for any }c>c_\#$.
\end{itemize}
\end{Corollary}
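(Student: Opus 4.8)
The plan is to trap the (possibly non‑radial) solution $u$ between two \emph{radial} solutions, to which Theorem~\ref{thm:convergence.compact.support} applies, and then to read off (i)--(iii) from the fact that both of these barriers converge, in the moving radial frame, to translates of the wavefront $U_{c^*}$.

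To build the upper barrier I would first note that $u\le M:=\max\{\|u_0\|_\infty,1\}$, since the constant $M$ is a supersolution ($h(M)\le0$), and fix $R_0$ with $\operatorname{supp} u_0\subseteq\overline{B_{R_0}(0)}$. Then I choose a continuous, radial, compactly supported $\overline u_0\ge u_0$ whose support is large enough that $\overline u_0$ also dominates a translate of one of the spreading‑triggering initial data provided by the first spreading theorem of Section~\ref{sect-prelim.main}. The solution $\overline u$ of~\eqref{eq:main} with datum $\overline u_0$ is then radial, spreads, satisfies $u\le\overline u$ by the comparison principle, and (applying to $\overline u$ the remark that the support of a radial solution is eventually a ball) has $\{\overline u(\cdot,t)>0\}=B_{\overline\eta(t)}$ for $t$ large, where $\overline\eta$ is its interface; Theorem~\ref{thm:convergence.compact.support} then yields a constant $\overline r_0$ with $\overline u(\cdot,t)\to U_{c^*}(\cdot-c^*t+(N-1)c_\#\log t-\overline r_0)$ uniformly and $\overline\eta(t)=c^*t-(N-1)c_\#\log t+\overline r_0+o(1)$. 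For the lower barrier I would use that $u$ spreads: by Theorem~\ref{thm:spreading}(ii) there is a time $T_0$ at which $u(\cdot,T_0)$ is as close to $1$, on as large a ball, as one wishes, so $u(\cdot,T_0)\ge\underline u_0$ for some radial, compactly supported, spreading‑triggering datum $\underline u_0\le1$ (the first spreading theorem furnishes admissible data of height below any prescribed level, at the cost of enlarging the support). The solution $\underline u$ with datum $\underline u_0$ is radial, spreads, and $\underline u(\cdot,t-T_0)\le u(\cdot,t)$ for $t\ge T_0$ by comparison; Theorem~\ref{thm:convergence.compact.support} gives $\underline r_0$ with $\underline u(\cdot,\tau)\to U_{c^*}(\cdot-c^*\tau+(N-1)c_\#\log\tau-\underline r_0)$ uniformly and $\underline\eta(\tau)=c^*\tau-(N-1)c_\#\log\tau+\underline r_0+o(1)$. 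Since $\log(t-T_0)=\log t+o(1)$, both $\overline\eta(t)$ and $\underline\eta(t-T_0)$ equal $c^*t-(N-1)c_\#\log t+O(1)$.

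Granted these barriers, (i) is immediate: for $t$ large $u>0$ on $B_{\underline\eta(t-T_0)}$, so $\Gamma(t)$ avoids that ball, while $u\equiv0$ outside $B_{\overline\eta(t)}$, so $\Gamma(t)\subseteq\overline{B_{\overline\eta(t)}}$; since the two radii differ from $c^*t-(N-1)c_\#\log t$ by bounded amounts, this produces $r_1\le r_2$. For (ii), fix $l\in(0,1)$ and recall that $U_{c^*}$ is continuous, strictly decreasing on $(-\infty,0]$, and maps it onto $[0,1)$. If $u(x,t)=l$ then $\overline u(x,t)\ge l$, and uniform convergence of $\overline u$ forces $|x|-c^*t+(N-1)c_\#\log t-\overline r_0\le U_{c^*}^{-1}(l/2)$ for $t$ large, which gives the outer bound $r_2^l$; symmetrically, $\underline u(x,t-T_0)\le l$ forces the analogous argument to be at least $U_{c^*}^{-1}((1+l)/2)$, and absorbing the $o(1)$ coming from $\log(t-T_0)$ gives the inner bound $r_1^l$. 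For (iii), fix $c>c_\#$: when $|x|\le c^*t-(N-1)c\log t$ the argument of $U_{c^*}$ in the lower barrier is at most $-(N-1)(c-c_\#)\log t+C\to-\infty$ uniformly, hence $u\ge\underline u(\cdot,t-T_0)\to1$ uniformly on that region (because $U_{c^*}(-\infty)=1$ and $U_{c^*}$ is monotone); moreover $u\le\overline u\le1+o(1)$ uniformly, since $U_{c^*}\le1$. Combining the two bounds gives $u\to1$ uniformly on $\{|x|\le c^*t-(N-1)c\log t\}$.

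I expect the one genuinely delicate point to be the construction of the lower radial barrier with the correct spreading behaviour: it rests on the fact that a spreading solution is eventually $\ge1-\varepsilon$ on arbitrarily large balls (Theorem~\ref{thm:spreading}(ii)) together with the possibility, among the parameters of the triggering family of the first spreading theorem, of trading width for height. The rest is bookkeeping with the logarithmic corrections---most notably checking that the shift $(N-1)c_\#\log t$ of each barrier agrees, up to $O(1)$, with the one in the statement despite the time translation $T_0$ used for the lower barrier, which is immediate from $\log(t-T_0)=\log t+o(1)$---plus the comparison principle.
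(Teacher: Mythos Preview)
Your proposal is correct and takes essentially the same approach as the paper, which only indicates that the corollary follows from Theorem~\ref{thm:convergence.compact.support} ``combined with a comparison argument'' without giving details; your sandwich between two radial spreading solutions is exactly that argument, and the details you supply (construction of the barriers via Theorem~\ref{tma_over_fujita} and Theorem~\ref{thm:spreading}(ii), handling of the time shift $T_0$ through $\log(t-T_0)=\log t+o(1)$, and the read-off of (i)--(iii) from the monotonicity and limits of $U_{c^*}$) are sound.
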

A question that naturally arises is whether properties (i) and (ii) are sharp or not,
that is, does the difference between the radii of the internal and external balls  tend
to 0 as $t\to\infty$? Of course, for this question to make sense we have to consider all possible balls, not just the ones centred at the origin.
We expect a negative answer, in view of the counterexamples for bistable and concave monostable nonlinearities in the semilinear case given in~\cite{Rossi-2017,Roussier-2004,Yagisita-2001}. The best we expect is to show that there exists a Lipschitz function $s^\infty$ defined on the  unit sphere
such that $u(x,t)$ approaches, as $t$ goes to infinity, the function
$$
U_{c^*}(|x|-c^*t+(N-1)c_\sharp\log t+s^\infty(x/|x|)),
$$
a result that has been proved very recently  in~\cite{Roquejoffre-Rossi-RoussierMichon-2019} for the semilinear case with logistic reaction nonlinearity.

\subsection{A bound for the flux}\label{subsect-bound.flux}
In order to prove Theorem~\ref{thm:convergence.compact.support} we need a bound for the flux $|\nabla u^m|^{p-2}\nabla u^m$, in order to get rid  of the convection term appearing in~\eqref{eq:radial.equation} when $N>1$, in the limit $t\to\infty$,  for $r$  in the range close to the front. This bound can be proved for $t>0$ any distance away from  $t=0$.
\begin{Theorem}
	\label{thm:bound.flux}
 Assume~\eqref{eq:slow.regime} and
let  $u$ be a  nonnegative solution of equation~\eqref{eq:main} with $u_0\in L^\infty (\mathbb{R}^N)$. For every fixed time $\tau>0$ there exists a positive constant $k$ depending on $\tau$, $m$, $p$,   the reaction function $h$, and $\|u_0\|_\infty$, such that
	$$
	|\nabla u^m(x,t)|\leq  k\quad\text{for all }x\in\mathbb{R}^N,\ t\ge\tau.
	$$
\end{Theorem}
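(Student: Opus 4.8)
The plan is to establish a Bernstein-type gradient estimate for the pressure-like variable associated with the doubly nonlinear operator, and then transfer it back to $|\nabla u^m|$. The natural variable here is
\[
v := \frac{m}{m-\alpha}\,u^{m-\alpha}, \qquad \alpha=\tfrac1{p-1},
\]
since in the slow diffusion regime $m-\alpha>0$ and $v$ plays the role of the ``pressure'' (compare \eqref{profile:property}); one computes that $v$ satisfies a quasilinear equation of the form $v_t = |\nabla v|^{p-2}\big(a(v)\Delta v + b(v)|\nabla v|^2\big) + \tilde h(v)$ for suitable smooth coefficients (this is the analogue of the porous medium pressure equation and of the $p$-Laplacian substitution). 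The target estimate $|\nabla u^m|\le k$ for $t\ge\tau$ is equivalent, after the change of variables and using $0\le u\le M:=\max\{1,\|u_0\|_\infty\}$ (which follows by comparison with the constant supersolution, since $h\le 0$ for $u>1$), to a bound of the form $|\nabla v|\le K$ for $t\ge\tau$.

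First I would reduce to a local problem: it suffices to prove that for every space-time cylinder $Q_R(x_0):=B_{2R}(x_0)\times(\tau/2,\infty)$ one has $|\nabla v|\le K$ on $B_R(x_0)\times(\tau,\infty)$ with $K$ independent of $x_0$, by the translation invariance of the equation in $x$. On such a cylinder I would run a Bernstein argument on $w:=|\nabla v|^2$ cut off both in space (by a fixed cutoff $\zeta(x)$ supported in $B_{2R}$, equal to $1$ on $B_R$) and in time (by a factor $\min\{1,(t-\tau/2)/(\tau/2)\}$ or a smooth version), so the auxiliary function $\Phi := \theta(t)\zeta(x)^{\sigma} w$ vanishes on the parabolic boundary and we may work at an interior maximum point. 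Differentiating the $v$-equation, multiplying by $\nabla v$, and using the structural ellipticity $|\nabla v|^{p-2}a(v)$ (here $p\ge 1$ is fine since we stay away from $v=0$ only where the operator could be singular — but note the estimate must also hold near the free boundary where $\nabla v$ is bounded precisely by the travelling-wave slope, so one should exploit that the ``bad'' term $|\nabla v|^{p}$ produced by $b(v)|\nabla v|^2$ has a favorable sign or can be absorbed), one obtains at the maximum point an inequality of the shape
\[
0 \le -c_1\,|\nabla v|^{p-2}|\nabla^2 v|^2 + C_2\big(1+\Phi + \Phi^{(p-1)/p}/R\big),
\]
after using $|\nabla^2 v|^2 \ge$ (const)$\cdot |\nabla w|^2/w$ and $|\nabla w|=0$ at the max, plus the a priori bound on $v$ and on $\tilde h$. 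Controlling the lower-order terms coming from the reaction $\tilde h(v)$ is routine because $h\in C^1$ and $v$ is bounded. This yields $\Phi\le C(\tau,R,m,p,h,M)$, hence the claimed bound after fixing, say, $R=1$.

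There are two genuine obstacles. The first is that $v$ is only a weak solution and a priori not $C^2$, so the Bernstein computation must be justified by regularization: I would carry it out on the family of uniformly parabolic approximate problems (with diffusion $|\nabla v|^{p-2}$ replaced by $(\varepsilon+|\nabla v|^2)^{(p-2)/2}$ and $u^{m-\alpha}$ suitably mollified) used in the existence proof referred to in the paper, obtain the estimate with constants independent of $\varepsilon$, and pass to the limit — here the interior H\"older regularity of $|\nabla u^m|$ from \cite{Porzio-Vespri-1993,Vespri-1992} gives the compactness needed to identify the limit. The second, more serious, obstacle is the degeneracy of the ellipticity where $\nabla v=0$: when $p>2$ the factor $|\nabla v|^{p-2}$ in front of the good negative term $-|\nabla^2 v|^2$ degenerates exactly where $w$ is small, which is harmless since we only need an \emph{upper} bound on $w$, but one must be careful that the ``bad'' gradient term generated by the first-order part of the operator (of order $|\nabla v|^{p}= w^{p/2}$) does not overwhelm things; the resolution is to choose the exponent $\sigma$ of the cutoff large enough and to note that, because $\alpha<m$, the coefficient $b(v)$ has the sign that makes $b(v)|\nabla v|^2$ act, after differentiation, as a damping term rather than a source — this is precisely the slow-diffusion structural miracle, and checking its sign carefully in the doubly nonlinear setting (where $p\ne2$) is the heart of the proof. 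Once that sign is verified, everything else is the standard maximum-principle bookkeeping.
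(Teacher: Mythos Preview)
Your overall framework is right---work with the pressure $v=\frac{m}{m-\alpha}u^{m-\alpha}$, regularize to justify the computation, run a Bernstein argument with a space-time cutoff, and pass to the limit. This is exactly the paper's strategy. But the crucial mechanism that closes the estimate is missing from your outline, and your identification of where the coercivity comes from is incorrect.

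In the pressure equation $v_t=(m-\alpha)v\,\Delta_p v + |\nabla v|^p + \tilde h(v)$, the coefficient you call $b(v)$ is identically~$1$; differentiating $|\nabla v|^p$ and dotting with $\nabla v$ gives $p|\nabla v|^{p-2}\nabla v\cdot\operatorname{Hess}(v)\nabla v$, which at an interior maximum of $\zeta^\sigma|\nabla v|^2$ becomes a term of order $|\nabla v|^{p+1}\cdot|\nabla\zeta|$, \emph{not} a coercive $-|\nabla v|^{p+2}$ term. Likewise the ``good'' term $-c_1|\nabla v|^{p-2}|\nabla^2 v|^2$ you invoke does not, at the maximum point, give any lower bound on $|\nabla v|$ itself---$\operatorname{Hess}(v)\nabla v$ is controlled by the cutoff, but the full $|\nabla^2 v|$ is not bounded below by a power of $|\nabla v|$. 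So neither side of your sketched inequality produces the damping you need, and the ``slow-diffusion structural miracle'' you allude to is not located where you say it is. What the paper actually does is introduce a further substitution $v=\theta(w)$ with $\theta$ smooth, increasing and \emph{strictly concave} (specifically $\theta(w)=aw(2-w)$ on a short interval); the chain rule then manufactures $\theta''|\nabla w|^{p+2}$ terms from several places, and after collecting them one finds the leading coefficient
\[
\mathcal{B}(\theta)=(m-\alpha+1)\theta''+(m-\alpha)(p-2)\theta\Big(\frac{\theta''}{\theta'}\Big)^2+(m-\alpha)\theta\Big(\frac{\theta''}{\theta'}\Big)',
\]
which with this choice of $\theta$ (and $\delta$ small) is bounded above by a strictly negative constant. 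That is where the $-\zeta^2|\nabla w|^{p+2}$ coercive term comes from; everything else in the maximum-point computation (ten separate groups of terms, handled via the first- and second-order optimality conditions and Cauchy--Schwarz) is then of order at most $p+1$ in $|\nabla w|$ and can be absorbed. Without this Aronson-type substitution your argument does not close.
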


\noindent\emph{Remark. } A similar result holds for equation~\eqref{eq:convection} when $\gamma$ is constant.

\medskip

This bound  follows from an estimate for the gradient of the  \emph{pressure}  function $v:= \frac {m}{m-\alpha}u^{m-\alpha}$, proved in Section~\ref{sect-bound.flux}, which  has independent interest. The pressure is expected to rule the advance of the free boundary; see~\eqref{profile:property} for the case of travelling wave solutions.

In the porous medium case with a logistic reaction term Theorem~\ref{thm:bound.flux} was proved in~\cite{Du-Quiros-Zhou-Preprint} using an estimate from below for the Laplacian of the pressure due to~\cite{Perthame-Quiros-Vazquez-2014}.
An estimate from below for the $p$-Laplacian of the pressure  function is available for the general doubly nonlinear diffusion case with no reaction in dimension one~\cite{Esteban-Vazquez-1988}, which could, possibly,  be extended to include problems with certain reaction nonlinearities.  If successful, this would allow us to obtain the required bound for the flux.  Unfortunately, even in the porous medium case the estimate for the Laplacian of the pressure is only known to be valid under certain hypotheses on the reaction nonlinearity that leave out most of the nonlinearities we are interested in, in particular bistable and combustion ones. Thus we need a different approach.

The method that we will employ here is inspired by the work of Bernstein~\cite{Bernstein-1938}, which has been fruitfully applied to deal with nonlinear \emph{nondegenerate parabolic} equations; see, for example, \cite{Ilin-Kalashnikov-Oleinik-2001} and
further references given there. Aronson was the first  to apply this approach to a degenerate equation in~\cite{Aronson-1969}, where he considers the porous medium equation without reaction in dimension one. Versions including \emph{absorption} terms are also available~\cite{Herrero-Vazquez-1987}.  Aronson's paper was extended in~\cite{Esteban-Vazquez-1986} to cover doubly nonlinear diffusion operators, but still in dimension one and without reaction. Here we will deal with general nonlinearities satisfying~\eqref{eq:reaction} in any dimension, which makes the analysis quite involved and technically cumbersome.

\medskip

\noindent\emph{Notation. } In what follows $\mathcal{L}$ will denote the parabolic operator  defined through
\begin{equation*}\label{eq:def.L}
\mathcal{L} u:= u_t -\Delta_p u^m- h(u).
\end{equation*}


\section{A bound for the flux}
\label{sect-bound.flux} \setcounter{equation}{0}

We devote this section to find an estimate for $|\nabla u^m|$ that will imply that this quantity is uniformly bounded for all $t\ge \tau>0$.  The estimate will follow from a bound for the derivative of the pressure $v$.  In order to obtain the latter estimate we will use an $N$-dimensional Bernstein-type method.

\begin{Lemma}\label{lema_bounded_derivative_pressure}
Assume~\eqref{eq:slow.regime}. Let $u$ be a bounded nonnegative solution of problem~\eqref{eq:main} and let $v:= \frac {m}{m-\alpha}u^{m-\alpha}$ be the corresponding pressure. Let $\mathcal H:=\sup_{[0,M]}|h'|$, with $M=\max\{1,\|u_0\|_{L^\infty(\mathbb{R}^N)}\}$. There exists a constant $C$  depending only on $\|u_0\|_\infty$ and $h$ such that
$$
|\nabla v|(x,t) \leq CM^{m-\alpha} \max\{(t^{-1}+\mathcal H)^{1/2}, (t^{-1}+\mathcal H)^{1/p}\}\quad\text{for all }(x,t)\in Q.
$$
\end{Lemma}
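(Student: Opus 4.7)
The plan is to adapt Bernstein's method, as developed for degenerate diffusion by Aronson~\cite{Aronson-1969} and Esteban--V\'azquez~\cite{Esteban-Vazquez-1986}, to the present reaction--diffusion, higher-dimensional setting. The quantity to which I would apply the parabolic maximum principle is $W(x,t):=\zeta(t)\,|\nabla v|^{2}$, with a time cut-off $\zeta$ (for instance $\zeta(t)=\min\{t,1\}$) that absorbs the initial singularity.

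First I would derive the equation for $v$. Using $\nabla v=m\,u^{m-\alpha-1}\nabla u$ and $\alpha(p-1)=1$, one checks that $|\nabla u^{m}|^{p-2}\nabla u^{m}=u\,|\nabla v|^{p-2}\nabla v$, and substituting in~\eqref{eq:main} yields, on $\{v>0\}$,
\begin{equation*}
v_{t}=(m-\alpha)\,v\,\Delta_{p}v+|\nabla v|^{p}+g(v),\qquad g(v):=m\,u^{m-\alpha-1}h(u).
\end{equation*}
The comparison principle keeps $u\le M$, and the identity $g'(v)=\tfrac{(m-\alpha-1)h(u)}{u}+h'(u)$, together with $h(0)=0$ and $h\in C^{1}$, gives $|g'(v)|\le C\mathcal{H}$ on the relevant range of $v$, with $C$ depending only on $m$ and $p$. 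Setting $w:=|\nabla v|^{2}$, differentiating the pressure equation in $x_{k}$, multiplying by $v_{k}$, and summing, I would then produce, on a suitable non-degenerate approximation of the equation as in \cite{Esteban-Vazquez-1986,Herrero-Vazquez-1987}, a parabolic inequality of the schematic form
\begin{equation*}
\tfrac{1}{2}w_{t}\le (m-\alpha)\,v\,\mathcal{L}w+\tfrac{p}{2}|\nabla v|^{p-2}\nabla v\cdot\nabla w-c_{0}\,v\,|\nabla v|^{p-4}|D^{2}v|^{2}+C_{1}\bigl(|\nabla v|^{p}+\mathcal{H}\,w\bigr),
\end{equation*}
with a positive $c_{0}=c_{0}(p)$, where $\mathcal{L}$ denotes the linearization of $\Delta_{p}$ at $\nabla v$. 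The Bernstein identity, which rewrites $\nabla v\cdot\nabla\Delta_{p}v$ as $\tfrac{1}{2}\mathcal{L}w$ minus a positive quadratic form in $D^{2}v$, is the main tool here, and the cross term $(m-\alpha)\,w\,\Delta_{p}v$ is absorbed into the negative Hessian contribution by Cauchy--Schwarz.

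Next I would apply the maximum principle to $W=\zeta w$. Attainment of the supremum is ensured by first approximating $u_{0}$ by compactly supported data, so that finite propagation forces $v(\cdot,t)$ to have compact support at every $t$; the general bound is then recovered by monotone limit. At an interior maximum $(x_{0},t_{0})$, the conditions $\nabla w=0$ and $\Delta w\le 0$ kill both the second-order term and the drift of the previous inequality, and the resulting relation reduces to
\begin{equation*}
\zeta'(t_{0})\,w(x_{0},t_{0})\ \le\ C_{2}\,\zeta(t_{0})\,\bigl(w(x_{0},t_{0})^{p/2}+\mathcal{H}\,w(x_{0},t_{0})\bigr).
\end{equation*}
Separating the two regimes $|\nabla v|^{p}\le w$ and $|\nabla v|^{p}\ge w$, and choosing $\zeta$ so that $\zeta'/\zeta\gtrsim t^{-1}$ on $(0,1]$ (restarting the argument from time $1$ to cover $t\ge 1$), one obtains either $w\le C_{3}(t^{-1}+\mathcal{H})$ or $w^{p/2}\le C_{3}(t^{-1}+\mathcal{H})$, which combined with the scaling $v\mapsto M^{m-\alpha}\tilde v$ that normalizes $\|v\|_{\infty}$ to $1$ reproduces the stated bound.

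I expect the main technical obstacle to be the derivation of the differential inequality for $w$ with constants that are uniform in the regularization. The coefficients of $\mathcal{L}$ involve $|\nabla v|^{p-2}$ and thus degenerate or blow up depending on $p$, the factor $v$ in front of $\Delta_{p}v$ degenerates on the free boundary, and the bad term $(m-\alpha)\,w\,\Delta_{p}v$ must be absorbed into $-c_{0}\,v\,|\nabla v|^{p-4}|D^{2}v|^{2}$ via a delicate Cauchy--Schwarz estimate; an ill-chosen regularization easily spoils the uniformity and wrecks the final maximum-principle step. Once such a clean inequality is in hand, the remaining argument is essentially algebraic.
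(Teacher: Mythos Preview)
Your outline misses the crucial ingredient that makes Bernstein's method work for this degenerate equation: the concave change of dependent variable $v=\theta(w)$, present in all the references you cite (Aronson, Esteban--V\'azquez, Herrero--V\'azquez). Without it the scheme breaks in two related places.

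First, the absorption you describe cannot be carried out. After differentiating the pressure equation and multiplying by $v_{k}$, the term coming from the variable coefficient $(m-\alpha)v$ in front of $\Delta_{p}v$ is $(m-\alpha)\,|\nabla v|^{2}\,\Delta_{p}v$. Applying Cauchy--Schwarz against the good Hessian term $-c_{0}\,v\,|\nabla v|^{p-4}|D^{2}v|^{2}$ leaves a residual of order $v^{-1}|\nabla v|^{p+4}$. The factor $v^{-1}$ is unbounded near the free boundary, where $|\nabla v|$ does \emph{not} vanish (cf.~\eqref{profile:property}), so this residual cannot be bounded by your $C_{1}|\nabla v|^{p}$. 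The role of the substitution $v=\theta(w)$ with $\theta''<0$ is precisely to replace this bad balance by a coercive term $\theta''\,|\nabla w|^{p+2}$ of the right sign and with a nondegenerate coefficient; in the paper this is the quantity $-(p-1)(\theta')^{p-2}\mathcal{B}(\theta)\,|\nabla w|^{p+2}$, with $\mathcal{B}(\theta)<0$ for a specific quadratic $\theta$.

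Second, even granting your differential inequality, the maximum-principle step does not close. At an interior maximum of $W=\zeta(t)\,w$ you have $w_{t}\ge -(\zeta'/\zeta)\,w$, and after discarding the (non-positive) second-order and Hessian terms your inequality reads $-(\zeta'/\zeta)\,w\le 2C_{1}(w^{p/2}+\mathcal{H}\,w)$. Since $\zeta'>0$ on $(0,1)$, the left-hand side is negative and the inequality is vacuous: there is no surviving coercive term of the form $-c\,w^{1+\delta}$ to pin down $w$. In the paper the coercive $|\nabla w|^{p+2}$ term produced by the $\theta$-substitution is exactly what makes the final algebraic step (your ``two regimes'') meaningful.

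A smaller point: the paper uses a space--time cutoff $\zeta(x,t)$ and lets its spatial support tend to infinity, which avoids both the compact-support approximation of $u_{0}$ and the need to justify attainment of the supremum. Your route via compactly supported data could be made to work, but the real obstacle is the missing $\theta$-substitution.
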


\begin{proof} We proceed by approximation.
Let $u_\varepsilon $ be the (smooth) solution of the uniformly parabolic problem
$$
u_t(x,t)=\operatorname{div}\left((|\nabla u^m|^{p-2}  + \varepsilon)\nabla u^m\right) + h(u), \qquad u(\cdot,0)=u_{0\varepsilon},
$$
where $\{u_{0\varepsilon}\}$ is a family of  smooth positive functions that converge uniformly in compact sets to $u_0$  as $\varepsilon\to0$, and such that $0<\varepsilon\leq u_{0\varepsilon}\leq \|u_0\|_{L^\infty(\mathbb{R}^N)}$. The above regularized problem has a comparison principle that allows to prove in particular that  $\varepsilon \leq u_\varepsilon (x,t) \leq M$. The pressure associated to $u_\varepsilon$, which we will denote $v$ instead of $v_\varepsilon$ for simplicity, satisfies
$$
v_t = (m-\alpha)v\operatorname{div}\left((|\nabla v|^{p-2}  + \varepsilon)\nabla v \right) + |\nabla v |^p + \frac{h(\Phi(v))}{\Phi^\prime(v)},
$$
where $\Phi(v ) = ((m-\alpha)v /m)^{1/(m-\alpha)}$ gives the density as a function of the pressure.

We define the function $F:\mathbb{R}\to \mathbb{R}$ as
\begin{equation*}
\label{eq:definition.F}
F(v):= \nabla v \operatorname{Hess}(v) (\nabla v)^T = \sum\limits_{i,j = 1}^{N}v_{x_i} v_{x_j}v_{x_i x_j},
\end{equation*}
where $A^T$ represents the transpose of the matrix $A$. Hence, thanks to the identity
\begin{equation}\label{eq:grad_norm}
\nabla\big(|\nabla f|^{\beta}\big) = \beta|\nabla f|^{\beta-2} \nabla f \operatorname{Hess}(f),
\end{equation}
we may rewrite the equation of the pressure as
$$
v_t = (m-\alpha) v\Big( (p-2)|\nabla v|^{p-4} F(v) +  (|\nabla v|^{p-2}  + \varepsilon)\Delta v \Big) + |\nabla v |^p + \frac{h(\Phi(v))}{\Phi^\prime(v)}.
$$

Let $S:= \frac {m}{m-\alpha}M^{m-\alpha}$. Then $\|v\|_{L^\infty(Q)}\le S$.  Let   $\theta: [0,\delta] \to [0,S]$, $\delta>0$,  be a smooth, strictly increasing and concave function that will be completely specified later. If we define $w:\mathbb{R}^N\to [0,\delta]$ through $v=\theta (w)$, it will satisfy the equation
\begin{equation}
\label{eq:press_theta}
\begin{aligned}
w_t =
& \,(m-\alpha)\theta\left((p-2)(\theta')^{p-5}|\nabla w|^{p-4}  F(\theta)  +  ((\theta')^{p-2}|\nabla w|^{p-2} + \varepsilon )  \left(\frac{\theta''}{\theta'}|\nabla w|^2 + \Delta w\right)\right)\\
&+ (\theta')^{p-1}|\nabla w|^p+\frac{h(\Phi(\theta))}{\Phi^\prime(\theta) \theta^\prime},
\end{aligned}
\end{equation}
where we have omitted the dependence of $\theta$ and its derivatives on $w$ for simplicity. An estimate for $|\nabla w|$  will yield an estimate for $|\nabla v|$.

We note  that
\begin{equation}
\label{eq:F.theta}
F(\theta)=(\theta')^2\Big( \theta'' \sum\limits_{i,j = 1}^{N}w_{x_i}^2 w_{x_j}^2  + \theta' \sum\limits_{i,j = 1}^{N}w_{x_i} w_{x_j}w_{x_i x_j}   \Big) = (\theta')^2\big( \theta'' |\nabla w|^4  + \theta' F(w)   \big).
\end{equation}

Since  $w\in C^2$,  $\operatorname{Hess}(w)$ is symmetric, and hence
$$
\begin{aligned}
\nabla(F(w))&=\sum\limits_{i,j=1}^N \nabla w_{x_i} w_{x_j} w_{x_i x_j}
+\sum\limits_{i,j=1}^N w_{x_i}\nabla w_{x_j}  w_{x_i x_j} +\sum\limits_{i,j=1}^N w_{x_i} w_{x_j}   \nabla w_{x_i x_j} \\
&=2 \nabla w \operatorname{Hess}(w)(\operatorname{Hess}(w))^T +  \nabla w  \sum\limits_{i=1}^N w_{x_i}\operatorname{Hess}(w_{x_i}).
\end{aligned}
$$
Therefore, using also~\eqref{eq:grad_norm}, we get
\begin{equation}
\label{eq:gradient.F.theta}
\begin{aligned}
\nabla( F(\theta)) =&\big( 2\theta' (\theta'')^2 + (\theta')^2\theta''' \big)|\nabla w|^4 \nabla w + 7 (\theta')^2\theta''|\nabla w|^2 \nabla w  \operatorname{Hess}(w)\\
&+(\theta')^3\Big(2 \nabla w \operatorname{Hess}(w)(\operatorname{Hess}(w))^T +  \nabla w  \sum\limits_{i=1}^N w_{x_i}\operatorname{Hess}(w_{x_i})\Big).
\end{aligned}
\end{equation}

At this stage we cannot guarantee that $|\nabla w|$ is bounded. Hence we will multiply it by a cut-off function  $\zeta \in C^\infty(Q)\cap C(\overline Q)$ such that $0\leq \zeta \leq 1$ and $\zeta(x,t) = 0$ if $t=0$ or $|x| \geq R >0$.  Let $Q_T:=\mathbb{R}^N\times(0,T]$.

We start with the case $p\ge2$. Assume that $z:=|\nabla w|^2\zeta^2$ achieves a non-trivial maximum in $\overline{Q_T}$ at some $(x_0, t_0)\in Q_T$. Notice that such point has to lie within $\operatorname{supp}(\zeta)$.
We want to estimate this maximum value. To this aim we take gradients in both sides of~\eqref{eq:press_theta}, and then  multiply the resulting equation from the right by $(\nabla w)^T\zeta^2$. Using~\eqref{eq:F.theta} and~\eqref{eq:gradient.F.theta}, we arrive at
\begin{equation}\label{eq:grad_press_theta}
\displaystyle 0=\sum_{i=0}^{10}\mathcal{A}_i,\quad \text{where}
\end{equation}
$$
\begin{aligned}
\mathcal{A}_0=&\,-\nabla w_{t}(\nabla w)^T \zeta^2,
\\
\mathcal{A}_1=&\,(m-\alpha)\Big((p-2)(\theta')^{p-2}|\nabla w|^{p-2} \big(\theta''|\nabla w|^4+\theta'F(w)\big)\\
&  +  \big((\theta')^{p-2}|\nabla w|^{p-2} + \varepsilon \big) \big(\theta''|\nabla w|^2 + \theta'\Delta w\big)|\nabla w|^2\Big)\zeta^2,\\
\mathcal{A}_2=&\,(m-\alpha)(p-2)(p-5)\theta(\theta')^{p-4}\theta''|\nabla w|^{p-2} \big(\theta''|\nabla w|^4+\theta'F(w)\big) \zeta^2, \\
\mathcal{A}_3=&\,  (m-\alpha)(p-2)(p-4)\theta  (\theta')^{p-3}|\nabla w|^{p-6}\big(\theta''|\nabla w|^4+\theta'F(w)\big)  F(w)\zeta^2,\\
\mathcal{A}_4=&\, (m-\alpha)  (p-2)\theta(\theta')^{p-4}|\nabla w|^{p+2} \big( 2 (\theta'')^2 + \theta'\theta''' \big)\zeta^2,\\
\mathcal{A}_5=&\, 7(m-\alpha) (p-2)\theta (\theta')^{p-3}\theta''|\nabla w|^{p-2} F(w)\zeta^2,\\
\mathcal{A}_6=&\, 2(m-\alpha) (p-2)\theta (\theta')^{p-5}|\nabla w|^{p-4} (\theta')^3\nabla w \operatorname{Hess}(w)(\nabla w\operatorname{Hess}(w))^T\zeta^2,\\
\mathcal{A}_7=&\, (m-\alpha)  (p-2)\theta(\theta')^{p-2}|\nabla w|^{p-4}   \nabla w  \sum\limits_{i=1}^N w_{x_i}\operatorname{Hess}(w_{x_i})(\nabla w)^T\zeta^2,\\
\mathcal{A}_8=&\, (m-\alpha)(p-2)\theta(\theta')^{p-4}|\nabla w|^{p-4}  \big(\theta''|\nabla w|^2 + \theta'\Delta w\big)\big(\theta''|\nabla w|^4  + \theta' F(w) \big)\zeta^2,\\
\mathcal{A}_9=&\, (m-\alpha)\theta  \big((\theta')^{p-2}|\nabla w|^{p-2} + \varepsilon \big) \Big(\Big( \frac{\theta''}{\theta'} \Big)' |\nabla w|^4 + 2 \frac{\theta''}{\theta'} F(w) + \nabla(\Delta w)(\nabla w)^T\Big)\zeta^2,\\
\mathcal{A}_{10}=&\, (p-1)(\theta')^{p-2}\theta'' |\nabla w|^{p+2} \zeta^2 + p(\theta')^{p-1}|\nabla w|^{p-2}F(w)\zeta^2 +  \frac{\partial}{\partial w} \Big(\frac{h(\Phi(\theta))}{\Phi^\prime(\theta) \theta^\prime} \Big)|\nabla w|^2\zeta^2.
\end{aligned}
$$
To proceed, we want to obtain bounds from above  for the terms $\mathcal{A}_i$, $i=0,\dots,10$, at $(x_0,t_0)$,
by expressions involving  $\nabla w$, but not directly involving higher order derivatives of $w$ or $w$ itself. So we are already done with~$\mathcal{A}_4$.

Since $z_t=2\nabla w_{t} (\nabla w)^T\zeta^2 + 2|\nabla w|^2\zeta\zeta_t$ and $z_t(x_0,t_0)\ge0$, we have
\begin{equation*}
\label{eq:time.derivative.at.maximum}
\displaystyle\mathcal{A}_0=-\nabla w_{t}(\nabla w)^T \zeta^2 \leq |\nabla w|^2\zeta\zeta_t  \quad\text{at }(x_0,t_0).
\end{equation*}

Using~\eqref{eq:grad_norm} we get $\nabla z = 2 \nabla w \operatorname{Hess}(w) \zeta^2 + 2|\nabla w|^2\zeta \nabla \zeta$. Thus, since  $\nabla z(x_0,t_0)=0$, we have
\begin{equation}
\label{eq:Hessian.F.maximum}
\operatorname{Hess}(w)=- \frac{(\nabla w)^T \nabla \zeta}{\zeta}\quad\text{at }(x_0,t_0),
\end{equation}
and hence
$$
F(w)= - \frac{|\nabla w|^2 \nabla \zeta(\nabla w)^T}{\zeta},\quad
\displaystyle \Delta w = \operatorname{Trace}(\operatorname{Hess}(w))= -\frac{\nabla \zeta(\nabla w)^T}{\zeta}\quad\text{at }(x_0,t_0).
$$
Therefore, at $(x_0,t_0)$ we have the identities
$$
\begin{aligned}
\mathcal{A}_1&=(m-\alpha)\big(  (p-1)(\theta')^{p-2}|\nabla w|^{p-2} + \varepsilon \big)\big(\theta''\zeta^2|\nabla w|^2  - \theta'\zeta\nabla \zeta(\nabla w)^T\big)|\nabla w|^2,
\\
\mathcal{A}_2&= (m-\alpha)(p-2)(p-5)\theta (\theta')^{p-4}\theta''|\nabla w|^{p}  \big(\theta''\zeta^2|\nabla w|^2  - \theta'\zeta\nabla \zeta(\nabla w)^T\big),
\\
\mathcal{A}_3&=- (m-\alpha)  (p-2)(p-4)\theta(\theta')^{p-3}|\nabla w|^{p-2}\left(\theta''\zeta|\nabla w|^2  - \theta' \nabla \zeta (\nabla w)^T\right) \nabla \zeta (\nabla w)^T,
\\
\mathcal{A}_5&=- 7(m-\alpha) (p-2)\theta(\theta')^{p-3}\theta''\zeta|\nabla w|^{p} \nabla \zeta(\nabla w)^T,\\
\mathcal{A}_6&=2(m-\alpha)(p-2)\theta (\theta')^{p-2}|\nabla w|^{p}  |\nabla \zeta|^2,
\\
\mathcal{A}_8&= (m-\alpha) (p-2)\theta(\theta')^{p-4} |\nabla w|^{p-2}\big(\theta''\zeta |\nabla w|^{2} - \theta' \nabla \zeta(\nabla w)^T  \big)^2,
\\
\mathcal{A}_{10}&=(p-1)(\theta')^{p-2}\theta'' \zeta^2|\nabla w|^{p+2} - p(\theta')^{p-1}\zeta|\nabla w|^{p}  \nabla \zeta(\nabla w)^T +  \frac{\partial}{\partial w} \left(\frac{h(\Phi(\theta))}{\Phi^\prime(\theta) \theta^\prime} \right)\zeta^2|\nabla w|^2.
\end{aligned}
$$

In order to estimate $\mathcal{A}_7$ we compute
$$
\begin{aligned}
\operatorname{Hess}(z)=&\operatorname{Hess}(|\nabla w|^2)\zeta^2 + 4\zeta\big(\nabla w\operatorname{Hess}(w) \big)^T \nabla\zeta + 4\zeta (\nabla\zeta)^T \nabla w \operatorname{Hess}(w) + |\nabla w|^2 \operatorname{Hess}(\zeta^2)\\
=&\, 2\sum\limits_{i=1}^N \left((\nabla w_{x_i})^T  \nabla w_{x_i} +  w_{x_i}\operatorname{Hess}(w_{x_i}) \right)\zeta^2  + 4\zeta\big(\nabla w\operatorname{Hess}(w) \big)^T \nabla\zeta \\
&+ 4\zeta (\nabla\zeta)^T \nabla w \operatorname{Hess}(w) + 2 \big((\nabla \zeta)^T  \nabla \zeta +  \zeta\operatorname{Hess}(\zeta) \big
)|\nabla w|^2.
\end{aligned}
$$
Hence, using the calculus identity $\sum\limits_{i=1}^N (\nabla w_{x_i})^T \nabla w_{x_i} = (\operatorname{Hess}(w))^T  \operatorname{Hess}(w)$ and~\eqref{eq:Hessian.F.maximum}, we get
$$
\operatorname{Hess}(z)=  2\zeta^2 \sum\limits_{i=1}^N w_{x_i}\operatorname{Hess}(w_{x_i}) - 2\big( 2(\nabla\zeta)^T\nabla\zeta - \zeta \operatorname{Hess}(\zeta) \big)|\nabla w|^2\quad\text{at }(x_0,t_0).
$$
Thus, since
$\operatorname{Hess}(z)$ at $(x_0,t_0)$ is semi-definite negative,
\begin{equation*}
\label{eq:gradient.Hessian}
\nabla w \left( \sum\limits_{i=1}^N w_{x_i}\operatorname{Hess}(w_{x_i}) \right) (\nabla w)^T\zeta^2 \leq  \nabla w \left( 2(\nabla\zeta)^T\nabla\zeta - \zeta \operatorname{Hess}(\zeta) \right) (\nabla w)^T |\nabla w|^2\quad\text{at }(x_0,t_0),
\end{equation*}
and therefore
$$
\mathcal{A}_7\le(m-\alpha)  (p-2)\theta(\theta')^{p-2}|\nabla w|^{p-2}   \nabla w\left( 2(\nabla\zeta)^T\nabla\zeta - \zeta \operatorname{Hess}(\zeta) \right)(\nabla w)^T\quad\text{at }(x_0,t_0).
$$

In order to estimate the remaining term, $\mathcal{A}_9$, we compute
$$
\begin{aligned}
\Delta z &=   \Delta (|\nabla w|^2) \zeta^2 + 2 \nabla(|\nabla w|^2) (\nabla\zeta^2)^T + |\nabla w|^2 \Delta \zeta^2\\
&=  2\sum_{i=1}^N \left(w_{x_i}\Delta w_{x_i} + |\nabla w_{x_i}|^2\right)\zeta^2 + 8\zeta \nabla w \operatorname{Hess}(w)(\nabla\zeta)^T + |\nabla w|^2\Delta \zeta^2,
\end{aligned}
$$
which combined with the calculus identity $\sum\limits_{i=1}^N |\nabla w_{x_i}|^2 = \operatorname{Trace}\left((\operatorname{Hess}(w))^T  \operatorname{Hess}(w)\right)$ yields
$$
\Delta z = 2\nabla(\Delta w)(\nabla w)^T\zeta^2 + 2\zeta^2\operatorname{Trace}\left((\operatorname{Hess}(w))^T  \operatorname{Hess}(w)\right)+ 8\zeta \nabla w \operatorname{Hess}(w)(\nabla\zeta)^T+ |\nabla w|^2\Delta \zeta^2.
$$
Therefore, since $\Delta z(x_0,t_0) \leq 0$, using~\eqref{eq:Hessian.F.maximum} we get
\begin{equation*}
\label{eq:gradient.laplacian}
\nabla(\Delta w)  (\nabla w)^T \zeta^2 \leq \big(  3|\nabla \zeta|^2  - \Delta(\zeta^2/2)\big)|\nabla w|^2 \quad\text{at }(x_0,t_0).
\end{equation*}
Thus, using also once more~\eqref{eq:Hessian.F.maximum}, at $(x_0,t_0)$ we have
$$
\mathcal{A}_9\le(m-\alpha)\theta |\nabla w|^2 \big((\theta')^{p-2}|\nabla w|^{p-2} + \varepsilon \big) \Big(\zeta^2\Big(\frac{\theta''}{\theta'} \Big)' |\nabla w|^2  - 2\zeta \frac{\theta''}{\theta'} \nabla \zeta(\nabla w)^T +3|\nabla \zeta|^2  - \Delta(\zeta^2/2)\Big).
$$

We want to get rid of the terms involving $|\nabla w|^4$ appearing in $\mathcal{A}_1$ and $\mathcal{A}_9$, since they would be leading order terms if $p\in[1,2]$. The concavity of $\theta$ does the job for $\mathcal{A}_1$, and we get
$$
\begin{aligned}
\mathcal{A}_1\le\,&(m-\alpha) (p-1)(\theta')^{p-2}\theta'' \zeta^2|\nabla w|^{p+2}-(m-\alpha)\big(  (p-1)(\theta')^{p-2}|\nabla w|^{p-2} + \varepsilon \big)\theta'\zeta|\nabla w|^2\nabla \zeta(\nabla w)^T.
\end{aligned}
$$

As for $\mathcal{A}_9$, if we require $(\theta''/\theta')'\le0$, then
$$
\begin{aligned}
\mathcal{A}_9\le&\,(m-\alpha)\theta(\theta')^{p-2}\Big(\frac{\theta''}{\theta'} \Big)' \zeta^2|\nabla w|^{p+2}\\
&+(m-\alpha)\theta |\nabla w|^2 \big((\theta')^{p-2}|\nabla w|^{p-2} + \varepsilon \big) \Big(- 2\zeta \frac{\theta''}{\theta'} \nabla \zeta(\nabla w)^T +3|\nabla \zeta|^2  - \Delta(\zeta^2/2)\Big).
\end{aligned}
$$

Starting from~\eqref{eq:grad_press_theta} and using the above estimates and Cauchy-Schwarz inequality we get, grouping the terms involving $|\nabla w|^{p+2}$ on the left hand side,
\begin{equation*}\label{eq:condition_theta}
\begin{aligned}
-(p-1)&(\theta')^{p-2}\mathcal{B}(\theta)\zeta^2|\nabla w|^{p+2}\\
\le\,&  c_1(\theta')^{p-3}\big((\theta')^{2}+\theta|\theta''|\big)\zeta\|\nabla \zeta\|_\infty|\nabla w|^{p+1} + c_2\theta(\theta')^{p-2}(\|\nabla\zeta\|_\infty^2+\|\operatorname{Hess}(\zeta)\|_\infty)|\nabla w|^{p}\\
&+ \varepsilon c_3\Big(\theta'+\frac{\theta|\theta''|}{\theta'}\Big)\zeta\|\nabla \zeta\|_\infty|\nabla w|^{3} + \varepsilon c_4\theta(\|\nabla\zeta\|_\infty^2+\|\operatorname{Hess}(\zeta)\|_\infty)|\nabla w|^{2}
\\
&+ \Big(\zeta\|\zeta_t\|_{L^\infty (Q_T)}+\zeta^2 \Big|\frac{\partial}{\partial w} \Big(\frac{h(\Phi(\theta))}{\Phi^\prime(\theta) \theta^\prime} \Big)\Big|\Big)|\nabla w|^{2},
\end{aligned}
\end{equation*}
where $c_i$, $i=1,\dots,4$, are positive constants, that do not depend at all on the solution, and
$$
\mathcal{B}(\theta):=(m-\alpha+1)\theta'' +  (m-\alpha)(p-2)\theta \left(\frac{\theta''}{\theta'}\right)^2 + (m-\alpha)\theta \left(\frac{\theta''}{\theta'}\right)'.
$$

We take $\theta(w):=aw(2-w)$  with $a=S/(2\delta - \delta^2)$, $\delta\in(0,1)$, so that  $\theta([0,\delta])=[0,S]$, and $\theta'>0$, $\theta''<0$,  $(\theta''/\theta')'\le0$ in $[0,\delta]$. On the other hand, since $a>0$ and $\alpha\in (0,m)$, if $\delta\in(0,1/2)$ we have
$$
\begin{aligned}
\mathcal{B}(\theta)&=-a\left(m-\alpha+1-(m-\alpha)(p-3)\frac{w(2-w)}{(1-w)^2} \right)\\
&\le \begin{cases}
-a(m-\alpha+1)\quad&\text{if } p\in(1,3],\\
-a(m-\alpha+1-(m-\alpha)(p-3)8\delta)\quad&\text{if } p>3.
\end{cases}
\end{aligned}
$$
Hence, taking $\delta=\delta(m,p)$ small enough we have $\mathcal{B}(\theta)\le-a(m-\alpha+1)/2<0$.

Once $\theta$ has been chosen, we proceed to estimate the term involving the reaction nonlinearity $h$. Since $\Phi^\prime=C_1\Phi^{1+\alpha-m}$, $\Phi^{\prime\prime} =C_2\Phi^{1+2\alpha-2m}$ for some constants $C_1\in\mathbb{R}_+$,  $C_2 \in\mathbb{R}$, and $\theta''=-2a$, then
$$
\begin{aligned}
\frac{\partial}{\partial w} \left(\frac{h(\Phi(\theta))}{\Phi^\prime(\theta) \theta^\prime} \right)
&= h^\prime(\Phi(\theta)) - \frac{h(\Phi(\theta))}{C_1^2 \Phi(\theta)}\Big( C_2 -\frac{2aC_1 \Phi^{m-\alpha}(\theta)}{(\theta')^2}\Big).
\end{aligned}
$$
Note that   $\theta'(w)=2a(1-w)\ge 2a(1-\delta)>a$ if $\delta\in (0,1/2)$. We also recall that $0\le \Phi^{m-\alpha}(\theta)/a\le CS/a\le C$. Hence, since
$|h(\Phi)/\Phi|\le \sup_{[0,M]}|h'|=:\mathcal{H}$,
we get
$$
\Big|\frac{\partial}{\partial w} \left(\frac{h(\Phi(\theta))}{\Phi^\prime(\theta) \theta^\prime} \right)\Big|\leq  C\mathcal{H}.
$$

We have finally arrived to
\begin{equation}
\label{eq:ineq.bernstein}
\begin{aligned}
\zeta^2|\nabla w|^{p+2}\leq\,&  d_1\zeta\|\nabla \zeta\|_\infty|\nabla w|^{p+1} + d_2(\|\nabla\zeta\|_\infty^2+\|\operatorname{Hess}(\zeta)\|_\infty)|\nabla w|^{p}\\
&+ \varepsilon d_3 S^{2-p}\zeta\|\nabla \zeta\|_\infty|\nabla w|^{3} + \varepsilon d_4S^{2-p}(\|\nabla\zeta\|_\infty^2+\|\operatorname{Hess}(\zeta)\|_\infty)|\nabla w|^{2}
\\
&+ d_5 \big(\zeta\|\zeta_t\|_{L^\infty (Q_T)}+\zeta^2\mathcal{H}\big)|\nabla w|^{2}\quad\text{at }(x_0,t_0),
\end{aligned}
\end{equation}
where $d_i$, $i=1,\dots,5$, are positive constants, that do not depend at all on the solution. Since $p\ge 2$, for all $\gamma>0$ there are constants $C_\gamma,D_\gamma>0$ such that
$$
\zeta|\nabla w|^{p+1}\le \gamma\zeta^2|\nabla w|^{p+2}+C_\gamma|\nabla w|^{p},\qquad
\zeta|\nabla w|^{3}\le \gamma\zeta^2|\nabla w|^{p+2}+D_\gamma|\nabla w|^{2},
$$
and hence (remember that $S\ge 1$)
$$
\begin{aligned}
\Big(1-\gamma &(d_1+\varepsilon d_3)\|\nabla \zeta\|_\infty\Big)\zeta^2|\nabla w|^{p+2}\leq \Big(d_2(\|\nabla\zeta\|_\infty^2+\|\operatorname{Hess}(\zeta)\|_\infty)+d_1 C_\gamma\|\nabla \zeta\|_\infty\Big)|\nabla w|^{p}\\
&+\Big(\varepsilon d_3D_\gamma\|\nabla \zeta\|_\infty + \varepsilon d_4\big(\|\nabla\zeta\|_\infty^2+\|\operatorname{Hess}(\zeta)\|_\infty\big)+ d_5 \big(\zeta\|\zeta_t\|_{L^\infty (Q_T)}+\zeta^2\mathcal{H}\big)\Big)|\nabla w|^{2}.
\end{aligned}
$$
From now on we  assume $\|\nabla\zeta\|_\infty\le 1$ and $\varepsilon\in (0,1)$, and take $\gamma=1/(2(d_1+d_3))$, so that
$$
1-\gamma (d_1+\varepsilon d_3)\|\nabla \zeta\|_\infty\ge\frac12.
$$

Since $p\ge 2$, if $|\nabla w|(x_0,t_0)\ge 1$, at $(x_0,t_0)$ we have
$\zeta^2|\nabla w|^{2}\leq \mathcal{A}$, where
$$
\mathcal{A}= C\big(\|\nabla\zeta\|_\infty^2+\|\operatorname{Hess}(\zeta)\|_\infty+\|\nabla \zeta\|_\infty+\|\zeta_t\|_{L^\infty (Q_T)}+\mathcal{H}\big),
$$
while if $|\nabla w|(x_0,t_0)\le 1$ then
$
\zeta^p|\nabla w|^{p}\leq \zeta^2|\nabla w|^{p} \le \mathcal{A}$.
We conclude that
\begin{equation}
\label{eq:estimate.z}
z\le \max_{Q_T} z\leq \max\{\mathcal{A},\mathcal{A}^{2/p}\}.
\end{equation}

Given $(x_1,t_1)\in Q$ we take
$$
\zeta_n(x,t)=\frac{t}{t_1} \psi\left(\frac{x-x_1}{n} \right),
$$
where $\psi$ is a compactly supported smooth function satisfying $0\leq \psi\leq 1$, $\psi = 1$ if $|x|\leq 1$  and $\psi = 0$ if $|x|\geq 2$, which is an admissible cut-off function in $Q_{t_1}$ for estimate~\eqref{eq:estimate.z} if $n$ is large enough, so that $\|\nabla\zeta_n\|_\infty\le 1$. With this choice, $\|\zeta_{n,t}\|_\infty=1/t_1$, $\|\nabla\zeta_n\|_\infty=O(1/n)$, $\|\operatorname{Hess}(\zeta_n)\|_\infty=O(1/n^2)$. Passing to the limit $n\to\infty$ we get
$$
|\nabla w|^2(x_1,t_1) = z(x_1,t_1) \leq \max\{C\big(t_1^{-1}+\mathcal{H}\big), \big(C(t_1^{-1}+\mathcal{H}\big)^{2/p}\},
$$
which means, since $x_1,t_1$ were arbitrary and $0\le\theta'\le 2S$, that
\begin{equation}
\label{eq:estimate.square.pressure}
|\nabla v|^2(x,t) \leq 4CS^2\max\{t^{-1}+\mathcal{H}, (t^{-1}+\mathcal{H})^{2/p}\}.
\end{equation}

If $p\in (0,2)$, we repeat the above computations with $z:=|\nabla w|^p\zeta^2$, and we get again~\eqref{eq:ineq.bernstein}. In this case $(x_0,t_0)$ is the point where the new $z$ achieves its maximum in $Q_T$, and the constants are different from the ones for $p\ge 2$. Now, if $|\nabla w|(x_0,t_0)\ge 1$, at $(x_0,t_0)$ we have
$\zeta^2|\nabla w|^{p}\leq \mathcal{A}$, with $\mathcal{A}$ as above,
while if $|\nabla w|(x_0,t_0)\le 1$ then
$
\zeta^2|\nabla w|^{p}\leq \zeta^p|\nabla w|^{p} \le \mathcal{A}^{p/2}$. After choosing the cut-off function and passing to the limit $n\to\infty$, we arrive to
$$
|\nabla w|^p(x,t)  \leq \max\{C\big(t^{-1}+\mathcal{H}\big), \big(C(t_1^{-1}+\mathcal{H})\big)^{p/2}\},
$$
and from here to~\eqref{eq:estimate.square.pressure}.

We end by taking the limit $\varepsilon\to 0$.
\end{proof}

Since $\nabla u^m=u^\alpha\nabla v$, we get the following corollary, which immediately yields Theorem~\ref{thm:bound.flux}.
\begin{Corollary}
	\label{cor:bound.flux}
	Let $u$ be a bounded nonnegative solution of problem~\eqref{eq:main}. With the notations of Lemma~\ref{lema_bounded_derivative_pressure}, there exists a constant $C$ independent of $u$ such that
	$$
	|\nabla u^m(x,t)| \leq CM^m\max\{(t^{-1}+\mathcal{H})^{1/2}, (t^{-1}+\mathcal{H})^{1/p}\}.
	$$
\end{Corollary}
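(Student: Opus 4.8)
The plan is to read off the corollary directly from Lemma~\ref{lema_bounded_derivative_pressure} via the algebraic relation between $\nabla u^m$ and the gradient of the pressure $v=\frac{m}{m-\alpha}u^{m-\alpha}$. Wherever $u>0$ and $u$ is smooth, the chain rule gives $\nabla v = m\,u^{m-\alpha-1}\nabla u$ and $\nabla u^m = m\,u^{m-1}\nabla u$, whence $\nabla u^m = u^{\alpha}\nabla v$. This identity persists across the free boundary, since $u$ is continuous, the flux $|\nabla u^m|^{p-1}$ is continuous, and $\nabla u^m$ vanishes on $\{u=0\}$; so it holds throughout $Q$. The point of the identity is that it splits the task into controlling $u^\alpha$ and $|\nabla v|$ separately.

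Next I would use that $u$ is a bounded nonnegative solution, so $0\le u\le M$ with $M=\max\{1,\|u_0\|_{L^\infty(\mathbb R^N)}\}$, and therefore $u^\alpha\le M^\alpha$ (recall that $\alpha>0$ in the slow diffusion regime~\eqref{eq:slow.regime}). Combining this with the pressure estimate provided by Lemma~\ref{lema_bounded_derivative_pressure},
$$|\nabla v|(x,t)\le C M^{m-\alpha}\max\{(t^{-1}+\mathcal H)^{1/2},(t^{-1}+\mathcal H)^{1/p}\},$$
and observing that $\alpha+(m-\alpha)=m$, one obtains
$$|\nabla u^m(x,t)|=u^\alpha\,|\nabla v|(x,t)\le C M^{m}\max\{(t^{-1}+\mathcal H)^{1/2},(t^{-1}+\mathcal H)^{1/p}\},$$
with the very same constant $C$ (independent of $u$, depending only on $\|u_0\|_\infty$ and $h$) furnished by the lemma. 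This is exactly the asserted inequality.

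Finally, Theorem~\ref{thm:bound.flux} follows at once: for any fixed $\tau>0$ the function $t\mapsto t^{-1}+\mathcal H$ is decreasing, so evaluating the bound above at any $t\ge\tau$ yields $|\nabla u^m(x,t)|\le k$ with $k$ depending only on $\tau$, $m$, $p$, $h$ and $\|u_0\|_\infty$ (the dependence on $h$ and $\|u_0\|_\infty$ entering through $M$ and $\mathcal H=\sup_{[0,M]}|h'|$). There is essentially no obstacle in this argument — all the analytic difficulty has been absorbed into the Bernstein-type proof of Lemma~\ref{lema_bounded_derivative_pressure}; the only minor point worth a line is the justification that the pointwise chain-rule identity $\nabla u^m=u^\alpha\nabla v$ extends up to and across the boundary of the positivity set, which is immediate from the regularity of $u$ and of the flux.
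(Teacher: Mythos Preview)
Your proposal is correct and follows exactly the paper's approach: the paper derives the corollary from the single identity $\nabla u^m = u^\alpha \nabla v$ combined with the bound $u^\alpha \le M^\alpha$, which is precisely what you do (with some extra care about the free boundary that the paper leaves implicit).
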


\noindent\emph{Remark. }  It can be easily checked that the above proofs apply to bounded nonnegative solutions to equation~\eqref{eq:convection}. Indeed, the new term does not offer difficulties, since it will produce terms of (lower) order $p$ and $p+1$ in the inequality~\eqref{eq:ineq.bernstein}. As a corollary we conclude that the conclusion of Theorem~\ref{thm:bound.flux} is valid for such solutions, in fact with a constant $k$ uniformly valid for all $\gamma\in [0,\gamma_0]$ if $u_0$ is kept fixed. This will be useful later.


\section{ODE analysis}
\label{sect-subsupersolution} \setcounter{equation}{0}

In this section we gather several results that involve the analysis of ODEs. In the first subsection we construct specific sub- and supersolutions for the problem without convection to deal with one-dimensional situations. We devote the second subsection to construct wavefronts for the problem with convection~\eqref{eq:constant.convection}. Finally, in the third subsection we analyze the dependence of these wavefronts and their velocity on the convection parameter.

\subsection{One-dimensional sub- and supersolutions}
\label{subsect-one.dimensional}
We start by constructing sub- and supersolutions to~\eqref{eq:main} for $N=1$ of the form \begin{equation}\label{eq:def_sub_super}
w(x,t)=f(t)U_{c^*}(x-g(t)),
\end{equation}
following ideas from~\cite{Biro-2002,Garriz-2020}. The functions $f$ and $g$ will be required to solve a system of ODEs, so that $w$ approaches a travelling wave solution for large times, both in shape and speed.

When looking for a subsolution we will ask $f$ to grow to 1 very slowly so that it does not exceed our solution $u$ in size, while we will need a function $g$ that does not increase too fast for small times,  so that $w$ does not surpass $u$ through the boundary while the solution is \lq\lq starting to travel". For big times though, we need $g^\prime (t)\to c^*$.

When thinking about supersolutions we can allow $g$ to grow fast at the start, while maintaining the required behaviour in the limit. This speed also gives a bit more freedom when defining $f$, as we will see, but again it is better to make it go slowly to 1 to ensure that $w$ is above~$u$.

The ODE system for  $f$ and $g$  is given by
\begin{equation}\label{system:sub_super}
\begin{cases}
f^\prime (t) =  \varphi(f),\quad &f(0)=f_0 \in (1-\delta, 1+\delta), \\
g^\prime (t) = c^*f^{(p-1)m-1} -   k\varphi(f)/f,\quad &g(0)=g_0,
\end{cases}
\end{equation}
where $k>0$ is a big enough constant and $\delta$ is such that $h^\prime(u)<0$ for all $u\in(1-\delta,1+\delta)$. This interval must exist thanks to the continuity of $h^\prime$ at  $u=1$ and condition~\eqref{eq:reaction_deriv_in_1}.
The function $\varphi:[1-\delta,1+\delta] \rightarrow \mathbb{R}$ is taken such that:
\begin{itemize}
\item $\varphi(1)=0$, $\varphi^\prime(1)<0$, $\varphi>0$ in $[1-\delta,1)$ and $\varphi<0$ in $(1,1+\delta]$.

\item $\ \varphi$ is continuously differentiable (hence  Lipschitz continuous) in its domain.

\item $\sup\limits_{[1-\delta,1+\delta]} |\varphi^\prime| \leq H(1-\delta)^{(p-1)m}$, where $H=\inf\limits_{[1-\delta,1+\delta]} |h^\prime|$.
\end{itemize}

The proof of the next lemma is  rather straightforward; hence we omit it. Versions for the porous medium case have already been proved in~\cite{Biro-2002,Garriz-2020}.
\begin{Lemma}\label{lema1} Let~\eqref{eq:slow.regime} hold. Solutions $(f,g)$ to the ODE system~\eqref{system:sub_super} satisfy the following properties:
\begin{itemize}
\item[\rm (a)] $\lim\limits_{t \rightarrow \infty} f(t) = 1$.

\item[\rm (b)] If $1-\delta < f_0 < 1$ then $f$ is strictly monotone increasing, and hence $g(t) - c^*t$ is strictly monotone decreasing.

\item[\rm (c)]If $1+\delta>f_0 >1$ then $f$ is strictly monotone decreasing, and hence $g(t) - c^*t$ is strictly monotone increasing.

\item[\rm (d)] $\lim\limits_{t \rightarrow \infty} g(t) = \infty$.


\item[\rm (e)] $\lim\limits_{t \rightarrow \infty} g^\prime(t) = c^*$.

\item[\rm (f)] There exists $\xi_0 \in \mathbb{R}$ such that $\lim\limits_{t \rightarrow \infty} (g(t) - c^* t)= \xi_0$.
\end{itemize}
\end{Lemma}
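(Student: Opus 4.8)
The statement is entirely an elementary ODE matter — the bound $\sup_{[1-\delta,1+\delta]}|\varphi'|\le H(1-\delta)^{(p-1)m}$, needed later for the comparison argument, plays no role here — so I would organise it as a short phase-line analysis followed by two explicit integrations.

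First I would analyse the autonomous scalar equation $f'=\varphi(f)$. Since $\varphi$ is Lipschitz the solution is unique, and it is global because $\varphi(1-\delta)>0>\varphi(1+\delta)$ make $[1-\delta,1+\delta]$ forward-invariant; $g$ is then determined by quadrature. As $f=1$ is the only zero of $\varphi$ in this interval, the sign conditions on $\varphi$ force: $f\equiv1$ if $f_0=1$; $f$ strictly increasing and staying in $[f_0,1)$ if $f_0<1$; $f$ strictly decreasing and staying in $(1,f_0]$ if $f_0>1$. In every case $f$ is monotone and bounded, hence converges, and the limit must be a zero of $\varphi$, i.e.\ $1$. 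This yields (a) and the monotonicity of $f$ in (b) and (c).

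Next I would read off the behaviour of $g$. Differentiating,
$$
\tfrac{d}{dt}\big(g(t)-c^*t\big)=c^*\big(f^{(p-1)m-1}-1\big)-k\,\varphi(f)/f .
$$
Because the slow diffusion regime forces $(p-1)m-1>0$, the map $s\mapsto s^{(p-1)m-1}$ is increasing on $(0,\infty)$, so $f^{(p-1)m-1}-1$ has the sign of $f-1$; since $\varphi(f)$ has the opposite sign of $f-1$, the term $-k\varphi(f)/f$ also has the sign of $f-1$, and hence so does the whole right-hand side. Thus $g-c^*t$ is strictly decreasing when $f_0<1$ and strictly increasing when $f_0>1$, which completes (b) and (c). Since $f(t)\to1$, the right-hand side above tends to $0$, so $g'(t)\to c^*$, which is (e); in particular $g'(t)\ge c^*/2$ for large $t$, whence $g(t)\to\infty$, which is (d).

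The only point that takes a little thought is (f). By the previous paragraph $g-c^*t$ is monotone, so it suffices to show it is bounded, i.e.\ $g'(\cdot)-c^*\in L^1(0,\infty)$. The convective-type term is handled exactly: using $ds=df/\varphi(f)$ along the orbit,
$$
\int_0^\infty\frac{\varphi(f(s))}{f(s)}\,ds=\int_{f_0}^{1}\frac{df}{f}=\log\frac1{f_0}<\infty
$$
(with the analogous finite value when $f_0>1$). For the remaining term I would use $|f^{(p-1)m-1}-1|\le C\,|f-1|$ on the compact interval $[1-\delta,1+\delta]\subset(0,\infty)$ together with
$$
\int_0^\infty|f(s)-1|\,ds=\int_{I}\frac{|f-1|}{|\varphi(f)|}\,df ,
$$
$I$ being the bounded interval with endpoints $f_0$ and $1$; since $\varphi\in C^1$, $\varphi(1)=0$ and $\varphi'(1)<0$, the integrand tends to $1/|\varphi'(1)|<\infty$ as $f\to1$ and is therefore bounded on $I$, so the integral is finite. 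Hence $g'(\cdot)-c^*\in L^1(0,\infty)$ and $g(t)-c^*t\to g_0+\int_0^\infty\big(g'(s)-c^*\big)\,ds=:\xi_0\in\mathbb R$, which is (f). No step is genuinely hard — as the authors note the lemma is straightforward — the only mildly delicate ingredient being that the non-degeneracy $\varphi'(1)<0$ (the ODE analogue of $h'(1)<0$) is precisely what makes $f\to1$ fast enough for the front position $g(t)-c^*t$ to stabilise rather than drift logarithmically.
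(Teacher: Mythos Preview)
Your proof is correct and complete. The paper actually omits the proof of this lemma entirely, stating only that it is ``rather straightforward'' and referring to the earlier porous-medium versions in \cite{Biro-2002,Garriz-2020}; your phase-line analysis together with the two change-of-variable integrations (and in particular the use of $\varphi'(1)<0$ to make $|f-1|/|\varphi(f)|$ bounded near $f=1$) is exactly the natural way to fill in the details.
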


With the above choices of $f$ and $g$ the function $w$ defined by~\eqref{subsect-one.dimensional} is indeed a sub- or a supersolution; see~\cite{Garriz-2020} for a proof for the porous medium case, which can be easily adapted to the general case.

\begin{Lemma}\label{lema2}
Let~\eqref{eq:slow.regime} hold and $N=1$.
Let $w$ beas in~\eqref{eq:def_sub_super} and $f$ and $g$ be as in~\eqref{system:sub_super}. Then:
\begin{itemize}
\item[(i)] If  $f_0\in(1-\delta, 1)$, then $w$ is a subsolution of~\eqref{eq:main}.
\item[(ii)] If  $f_0\in (1,1+\delta)$, then $w$ is a supersolution of~\eqref{eq:main}.
\end{itemize}
In both cases there exists $\xi_0 \in \mathbb{R}$ such that
$$
\lim\limits_{t \rightarrow \infty} w(\xi + c^*t, t) = U_{c^*}(\xi - \xi_0)
\quad\text{uniformly with respect to }\xi \in \mathbb{R}.
$$
\end{Lemma}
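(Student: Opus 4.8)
The plan is to verify directly that $w(x,t)=f(t)U_{c^*}(x-g(t))$ satisfies the differential inequality $\mathcal{L}w\le 0$ (subsolution case) or $\mathcal{L}w\ge 0$ (supersolution case) in the region where $w>0$, i.e.\ where $x<g(t)$, and then argue that the behaviour across the free boundary at $x=g(t)$ is also of the correct sign. Write $\xi=x-g(t)$, so that $w=f(t)U_{c^*}(\xi)$ with $U_{c^*}<1$ supported on $\overline{\mathbb{R}_-}$. Since $N=1$, we compute $w_t=f'U_{c^*}-fg'U_{c^*}'$ and, using $(w^m)_x=f^m(U_{c^*}^m)'$, the diffusion term becomes $\bigl(|(w^m)_x|^{p-2}(w^m)_x\bigr)_x=f^{m(p-1)}\bigl(|(U_{c^*}^m)'|^{p-2}(U_{c^*}^m)'\bigr)'$. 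Because $U_{c^*}$ is the wavefront of speed $c^*$, it satisfies $\bigl(|(U_{c^*}^m)'|^{p-2}(U_{c^*}^m)'\bigr)'+c^*U_{c^*}'+h(U_{c^*})=0$ on $\{\xi<0\}$; substituting this identity to eliminate the second-order term yields
\begin{equation*}
\mathcal{L}w=f'U_{c^*}-fg'U_{c^*}'-f^{m(p-1)}\bigl(c^*U_{c^*}'+h(U_{c^*})\bigr)-h(fU_{c^*}).
\end{equation*}
Now insert $g'=c^*f^{m(p-1)-1}-k\varphi(f)/f$ from the ODE system~\eqref{system:sub_super}: the two terms containing $c^*U_{c^*}'$ cancel (note $fg'=c^*f^{m(p-1)}-k\varphi(f)$), leaving
\begin{equation*}
\mathcal{L}w=f'U_{c^*}+k\varphi(f)U_{c^*}'-f^{m(p-1)}h(U_{c^*})-h(fU_{c^*}).
\end{equation*}
Finally use $f'=\varphi(f)$.

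The key step is then to show that $\varphi(f)U_{c^*}-f^{m(p-1)}h(U_{c^*})-h(fU_{c^*})$ has the right sign, with the remaining term $k\varphi(f)U_{c^*}'$ — which has a favourable sign since $U_{c^*}'\le 0$ and $\varphi(f)>0$ for a subsolution (resp.\ $<0$ for a supersolution) — available to absorb any bad contribution near $\xi=0$ where $U_{c^*}$ is small. For the subsolution case ($f_0\in(1-\delta,1)$, so $f\in(1-\delta,1)$ and $\varphi(f)>0$): one splits into the region where $U_{c^*}$ is close to $1$ and the region where it is small. Where $U_{c^*}\ge 1-\delta$, write $h(fU_{c^*})-h(U_{c^*})=h'(\zeta)(f-1)U_{c^*}$ and $f^{m(p-1)}h(U_{c^*})-h(U_{c^*})=(f^{m(p-1)}-1)h(U_{c^*})$; since $h<0$ on $(1-\delta,1)$ shows that $-f^{m(p-1)}h(U_{c^*})\le 0$ is not automatically helpful, one instead uses the sign of $h'$ near $1$ and the constraint $\sup|\varphi'|\le H(1-\delta)^{m(p-1)}$, $H=\inf_{[1-\delta,1+\delta]}|h'|$, together with the mean value theorem applied to both $\varphi$ (vanishing at $1$) and $h$, to dominate $\varphi(f)U_{c^*}$ by the $h$-terms. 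Where $U_{c^*}$ is small, $h(U_{c^*})$ and $h(fU_{c^*})$ are $O(U_{c^*})$-small (or negative, if $U_{c^*}<a$), while $\varphi(f)U_{c^*}'$ is strictly negative and bounded away from zero on a neighbourhood of $\xi=0$ once $k$ is large; this is precisely why $k$ must be taken large enough. The supersolution case is entirely symmetric, using $f\in(1,1+\delta)$ and reversing all inequalities. Near and across the free boundary one checks, using the profile property~\eqref{profile:property} describing the slope of $(U_{c^*}^m)'$ as $\xi\to 0^-$, that $w$ is a genuine weak sub-/supersolution in the sense of~\eqref{eq:weak.solution} (the flux is continuous and the jump in the spatial derivative of $w^m$ across $x=g(t)$ has the correct sign); outside $\{x<g(t)\}$ the function $w$ vanishes and the inequality is trivial.

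The asymptotic statement is immediate from Lemma~\ref{lema1}: parts (a) and (f) give $f(t)\to 1$ and $g(t)-c^*t\to\xi_0$, so $w(\xi+c^*t,t)=f(t)U_{c^*}(\xi+c^*t-g(t))\to U_{c^*}(\xi-\xi_0)$, and the convergence is uniform in $\xi\in\mathbb{R}$ because $U_{c^*}$ is uniformly continuous and bounded on $\mathbb{R}$ (it is continuous, monotone, equals $0$ on $[0,\infty)$ and tends to $1$ at $-\infty$) while $f(t)-1\to 0$ and $g(t)-c^*t-\xi_0\to 0$ uniformly.

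I expect the main obstacle to be the sign verification of $\varphi(f)U_{c^*}-f^{m(p-1)}h(U_{c^*})-h(fU_{c^*})$ in the transition zone where $U_{c^*}$ ranges over $(0,1-\delta)$: there $h(U_{c^*})$ may be positive (monostable/combustion above the ignition level) or negative, so one cannot rely on a single sign, and the argument must instead trade this term against the uniformly negative boundary contribution $k\varphi(f)U_{c^*}'$ by choosing $k=k(m,p,h,\delta)$ sufficiently large — carefully, since $U_{c^*}'$ itself tends to $0$ as $\xi\to-\infty$, so the absorption must be localised to a fixed neighbourhood of $\xi=0$ and the far-field region handled separately using smallness of $|f-1|$. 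Since this is essentially the computation carried out in~\cite{Biro-2002,Garriz-2020} for the porous medium case, I would present it as an adaptation, spelling out only the places where the $p$-Laplacian scaling $f^{m(p-1)}$ (rather than $f^m$) enters — which is exactly why the exponent $(p-1)m-1$ appears in the $g'$ equation and why the constraint on $\sup|\varphi'|$ carries the factor $(1-\delta)^{(p-1)m}$.
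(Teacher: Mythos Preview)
Your overall strategy is exactly the one the paper intends (it omits the proof and refers to~\cite{Garriz-2020}, and the same expression appears as the term~$\mathcal{A}$ in the proof of Lemma~\ref{lem:first.log.correction}): compute $\mathcal{L}w$, use the wavefront ODE to eliminate the second-order term, insert $g'$ to cancel the $c^*U_{c^*}'$ contributions, then split the sign check into the region $U_{c^*}$ close to~$1$ (handled via the Lipschitz bound on $\varphi$ and $H=\inf|h'|$) and the region bounded away from~$1$ (handled by taking $k$ large). The asymptotic statement via Lemma~\ref{lema1} is also exactly right.

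There is, however, a sign slip that propagates. Since $\Delta_p w^m = f^{m(p-1)}\bigl(|(U_{c^*}^m)'|^{p-2}(U_{c^*}^m)'\bigr)'$ and the wavefront equation gives $\bigl(|(U_{c^*}^m)'|^{p-2}(U_{c^*}^m)'\bigr)' = -c^*U_{c^*}'-h(U_{c^*})$, the correct expression is
\[
\mathcal{L}w = f'U_{c^*} - fg'U_{c^*}' \,{+}\, f^{m(p-1)}\bigl(c^*U_{c^*}'+h(U_{c^*})\bigr) - h(fU_{c^*}),
\]
with a plus sign in front of $f^{m(p-1)}$; indeed, your claim that the $c^*U_{c^*}'$ terms cancel is only consistent with this sign. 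After cancellation the remainder reads $\mathcal{L}w=\varphi(f)U_{c^*}+k\varphi(f)U_{c^*}'+f^{m(p-1)}h(U_{c^*})-h(fU_{c^*})$. Relatedly, $h>0$ on $(1-\delta,1)$, not $h<0$ (recall $h>0$ on $(a,1)$, $h(1)=0$, $h'(1)<0$). With these two corrections your sketch goes through: for $U_{c^*}$ close to~$1$ one writes $f^{m(p-1)}h(U_{c^*})-h(fU_{c^*})=f^{m(p-1)}[h(U_{c^*})-h(fU_{c^*})]+(f^{m(p-1)}-1)h(fU_{c^*})$, and the mean-value estimates give the first bracket $\le -H(1-f)U_{c^*}$ while the second term is $\le 0$, so the constraint $\sup|\varphi'|\le H(1-\delta)^{m(p-1)}$ closes the inequality; for $U_{c^*}$ bounded away from~$1$, the ratio $|U_{c^*}'|/U_{c^*}$ is bounded below on $\{U_{c^*}\le 1-\delta\}$ (in fact $\to\infty$ as $\xi\to 0^-$ by~\eqref{profile:property}), so $k\varphi(f)U_{c^*}'$ absorbs the remaining $O(U_{c^*})$ terms once $k$ is large.
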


\subsection{Wavefronts for the problem with convection}
\label{subsect-wavefronts.convection}

We now look for travelling wave solutions to~\eqref{eq:convection}, that is, solutions of the form $u(r,t) = U(\xi)$, $\xi =r -ct $, with a nonincreasing profile $U$ connecting 1 to 0.   If we define
$\phi(\xi)=|(U^m)^\prime (\xi)|^{p-1}$, which  corresponds to the absolute value of the flux, our equation transforms into
\begin{equation*}\label{tweq}
-\phi' -\gamma \phi  + cU^\prime + h(U) =0,
\end{equation*}
and we get the system
\begin{equation}\label{eq:original_system}
U^\prime = -\frac{\phi ^\alpha}{mU^{m-1}}, \qquad \phi ^\prime = h(U)  -\gamma \phi -\frac{c\phi ^\alpha}{mU^{m-1}}.
\end{equation}
Defining $f(U) = mU^{m-1}h(U)$ and $\textrm{d}\xi = mU^{m-1} \, \textrm{d}\tau\ $ we arrive to the \lq\lq less singular'' system
\begin{equation}\label{eq:linearizable_system}
\dot U = -\phi ^\alpha, \qquad \dot \phi = f(U)  -\gamma m \phi  U^{m-1} -c\phi ^\alpha,
\end{equation}
where $\dot{}=\textrm{d}/\textrm{d}\tau$, with an equation for the trajectories $\phi (U)$ given by
\begin{equation}\label{eq:trajectories}
\frac{\textrm{d}\phi }{\textrm{d}U} = c+\gamma m U^{m-1}\phi ^{1-\alpha} - \frac{f(U)}{\phi ^{\alpha}}.
\end{equation}

System~\eqref{eq:linearizable_system} is very similar to the one  studied by Aronson and Weinberger in~\cite[Chapter 4]{Aronson-Weinberger-1978}, if we make the correspondence $(U,\phi)\leftrightarrow (q,p)$. In fact, we recover theirs by taking $m=1$, $\alpha=1$, $\gamma=0$, as expected.  Since our proofs will work out in a similar way,   we will only say something about them or the intermediate results when  there is a significant change with respect to the ones in~\cite{Aronson-Weinberger-1978}.

One can integrate by separation of variables in the equation $U^\prime = -\phi ^{\alpha}/mU^{m-1}$, before the change of variables $\xi \leftrightarrow \tau$, to get
\begin{equation}\label{eq:support_tw}
\xi_1-\xi_0 = m \int_{U(\xi_1)}^{U(\xi_0)} \frac{U^{m-1}}{(\phi (U))^\alpha}\,\textrm{d}U.
\end{equation}
Therefore, since $\alpha<m$, if a trajectory connecting $(1,0)$ with $(0,0)$   enters the origin with a finite slope different from 0, it will correspond to a finite wavefront. We will show that for all $\gamma\ge0$ small there is one, and only one,  positive value $c(\gamma)$ for which such a connection exists.
There may be other connecting trajectories for $c>c(\gamma)$, but they will enter the origin with slope~0, and they are not finite.

Let us begin the study of the system and the trajectories. Given $c \geq 0$ and $\nu>0$, let $\phi _c(U;\nu)$ be the only trajectory that passes through the point $(0,\nu)$, and
$$
U_{c,\nu}:=\sup\{\eta\in (0,1] :  \phi _c(U; \nu) >0\text{ for }U\in [0,\eta)\}.
$$
For convenience, if $U_{c,\nu} < 1$ we redefine $\phi _c(U;\nu)=0$ for $U\in [U_{c,\nu},1]$. Since $0<\nu<\mu$ implies $0\leq \phi _c(U;\nu) \leq \phi _c(U;\mu)$ for each $U \in [0,1]$, we can define
$$
\phi _c(U) = \lim\limits_{\nu \rightarrow 0} \phi _c(U;\nu),\quad U\in[0,1].
$$
Suppose that there exists a value $U^{(c)} \in (0,1]$ such that $\phi _c(U)>0$ in $(0,U^{(c)})$ and $\phi _c(U^{(c)})=0$ in case $U^{(c)} \neq 1$. It follows from the Monotone Convergence Theorem that $\phi_c(U)$ is a solution of \eqref{eq:linearizable_system} in $(0,U^{(c)})$. If $U^{(c)} <1$, we redefine $\phi_c(U) = 0$ for $U\in [U^{(c)},1]$ if necessary.
Let $S = \{ (U,\phi ): 0<U<1, \phi >0 \}$ and
$T_c=S\cap \{ (U,\phi ): 0<U<1, \phi = \phi _c(U)\}$.
Note that $T_c=\emptyset$ if $\phi_c\equiv0$. If it is non-empty, it is a curve through $(0,0)$.

To study the behaviour of the trajectories $T_c$, which will determine the critical value for $c$, we need the following comparison lemma, which is nothing but~\cite[Lemma 4.1]{Aronson-Weinberger-1978}: Let $\phi_1$, $\phi_2$ be functions defined in $[a,b]$ satisfying
$$
\frac{\textrm{d} \phi _1}{\textrm{d} U}=F_1(U,\phi _1)\quad\text{and} \quad\frac{\textrm{d} \phi _2}{\textrm{d} U}=F_2(U, \phi _2)\quad\text{in }(a,b).
$$
If $\phi _1(a)>\phi _2(a)$,
and either
$F_1(U, \phi _1)>F_2(U,\phi _1)$ or $F_1(U, \phi _2)>F_2(U,\phi _2)$ for all $U\in(a,b)$,
then $\phi _1(U)\geq \phi _2(U)$ for all $U\in[a,b]$.

 The next lemma corresponds to~\cite[Propositions 4.1--4.2]{Aronson-Weinberger-1978}.

\begin{Lemma}\label{lema_pp_1}
\emph{(a)} There are constants $k_1,k_2>0$ such that
\begin{equation}\label{eq:bound.above.phi_c}
\phi _c(U) < k_1 + k_2U\quad\text{for } U\in[0,1].
\end{equation}

\noindent \emph{(b)}	Let  $\sigma_0:=\sup\limits_{U\in [0,1]} h(U)/U$. If
	\begin{equation}
	\label{eq:cond.below.c}
	c> (\alpha + 1)\left(\frac{m\sigma_0}{\alpha^\alpha}\right)^{\frac{1}{\alpha+1}}\quad\text{and }\gamma\ge0,
	\end{equation}
then
	$$
	\displaystyle\frac{\alpha c}{\alpha+1} U \leq \phi _c(U) < k_1 + k_2U\quad\text{for } U\in[0,1].
	$$
In particular $U^{(c)}=1$, and hence	the curve $T_c$ is a trajectory of system~\eqref{eq:linearizable_system} in $S$ through $(0,0)$. It is maximal, in the sense that no other trajectory through $(0,0)$ has points in $S$ above $T_c$.
\end{Lemma}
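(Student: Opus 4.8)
The plan is to follow the strategy of \cite{Aronson-Weinberger-1978}: use the comparison lemma stated just above to trap the trajectories $\phi_c(\cdot;\nu)$ between explicit \emph{linear} barriers, and then pass to the limit $\nu\to0$ exploiting the monotonicity $0\le\phi_c(U;\nu)\le\phi_c(U;\mu)$ for $\nu<\mu$. The only genuine differences with respect to \cite{Aronson-Weinberger-1978} come from the factors $U^{m-1}$ and $\phi^{\pm\alpha}$ (rather than $1$ and $\phi^{\pm1}$) and from the convection term $\gamma mU^{m-1}\phi^{1-\alpha}$ in~\eqref{eq:trajectories}; write $F(U,\phi)$ for the right-hand side of~\eqref{eq:trajectories}.

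For part~(a) I would take $\bar\phi(U):=k_1+k_2U$, fix $k_1$ (say $k_1=1$), and choose $k_2$ large so that $\bar\phi$ is a strict supersolution of~\eqref{eq:trajectories}, i.e. $k_2>F(U,\bar\phi(U))$ on $(0,1)$. Along $\bar\phi$ the reaction contribution $-f(U)/\bar\phi^{\alpha}=-mU^{m-1}h(U)/(k_1+k_2U)^{\alpha}$ is bounded above on $[0,1]$: it is $\le0$ wherever $h\ge0$, and where $h\le0$ it is controlled because $h(0)=0$, $h\in C^1$ makes $U\mapsto U^{m-1}|h(U)|$ bounded on $[0,1]$; the term $c$ is constant, and when $m\ge1$ the convection term $\gamma mU^{m-1}\bar\phi^{1-\alpha}$ is bounded on $[0,1]$ by a quantity growing sublinearly in $k_2$, so a large $k_2$ suffices (uniformly for $\gamma\in[0,\gamma_0]$). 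When $m<1$ the factor $U^{m-1}$ blows up at $U=0$ and one localizes: near $U=0$ every trajectory through the origin behaves like $O(U)$, so there the convection term along it is $O(U^{m-\alpha})\to0$ since $m>\alpha$, while away from $U=0$ the previous argument applies. The comparison lemma with $\phi_1=\bar\phi$, $\phi_2=\phi_c(\cdot;\nu)$ (admissible since $\bar\phi(0)=k_1>\nu$ for $\nu$ small) then gives $\phi_c(U;\nu)\le k_1+k_2U$ on $[0,U_{c,\nu})$, hence on $[0,1]$ by the convention $\phi_c(\cdot;\nu)\equiv0$ beyond $U_{c,\nu}$; letting $\nu\to0$ yields $\phi_c\le k_1+k_2U$. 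Strictness is a sliding argument: an interior contact point $U_0$ with $\phi_c(U_0)>0$ would give $\phi_c'(U_0)=k_2$ (interior maximum of $\phi_c-\bar\phi$) and simultaneously $\phi_c'(U_0)=F(U_0,\bar\phi(U_0))<k_2$, a contradiction; at $U=0$ and where $\phi_c=0$ the inequality is strict trivially.

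For part~(b) the heart of the matter is that~\eqref{eq:cond.below.c} is \emph{exactly} the condition making the line $\underline\phi(U):=\frac{\alpha c}{\alpha+1}U$ a strict subsolution of~\eqref{eq:trajectories}. Indeed, since $\gamma\ge0$ the convection term is nonnegative, so along $\underline\phi$,
\begin{equation*}
c+\gamma mU^{m-1}\underline\phi^{\,1-\alpha}-\frac{f(U)}{\underline\phi^{\,\alpha}}\ \ge\ c-\frac{mU^{m-1}h(U)}{\bigl(\tfrac{\alpha c}{\alpha+1}U\bigr)^{\alpha}}\ \ge\ c-\frac{m\sigma_0}{\bigl(\tfrac{\alpha c}{\alpha+1}\bigr)^{\alpha}},
\end{equation*}
using $h(U)\le\sigma_0 U$ and $U^{m-\alpha}\le1$ (as $m>\alpha$, $U\le1$). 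A one-line computation shows $c-m\sigma_0\bigl(\tfrac{\alpha+1}{\alpha c}\bigr)^{\alpha}>\tfrac{\alpha c}{\alpha+1}=\underline\phi'$ is equivalent to $c^{\alpha+1}>m\sigma_0(\alpha+1)^{\alpha+1}/\alpha^{\alpha}$, i.e. to~\eqref{eq:cond.below.c} (the slope $\tfrac{\alpha}{\alpha+1}$ is the one that makes this threshold optimal). The comparison lemma with $\phi_1=\phi_c(\cdot;\nu)$, $\phi_2=\underline\phi$ (admissible since $\phi_c(0;\nu)=\nu>0=\underline\phi(0)$) then gives $\phi_c(U;\nu)\ge\frac{\alpha c}{\alpha+1}U$ wherever $\phi_c(\cdot;\nu)>0$; since the right-hand side is positive for $U>0$, this forces $U_{c,\nu}=1$, so the bound holds on all of $[0,1]$, and $\nu\to0$ gives $\phi_c(U)\ge\frac{\alpha c}{\alpha+1}U$ on $[0,1]$. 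Together with part~(a) this is the claimed two-sided estimate. In particular $\phi_c>0$ on $(0,1)$, so $U^{(c)}=1$, and by the discussion preceding the lemma $T_c$ is a trajectory of~\eqref{eq:linearizable_system} in $S$ through $(0,0)$, since $\phi_c(0)=\lim_{\nu\to0}\nu=0$.

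Finally, for maximality, suppose some trajectory $\widetilde\phi$ through $(0,0)$ had a point $(U_1,\widetilde\phi(U_1))\in S$ with $\widetilde\phi(U_1)>\phi_c(U_1)$. On $S$ the function $F(U,\phi)$ is $C^1$ (hence locally Lipschitz) in $\phi$, so distinct trajectories cannot cross there; since $\widetilde\phi(0)=0<\nu=\phi_c(0;\nu)$ and $\widetilde\phi\to0$ as $U\to0^+$, we have $\phi_c(U;\nu)>\widetilde\phi(U)$ for $U$ small, and this ordering persists while both curves stay in $S$ — a first crossing at some $U_*\in(0,1)$ with $\widetilde\phi(U_*)>0$ would violate uniqueness, while $\widetilde\phi(U_*)=0$ would force $U_*=U_{c,\nu}=1$. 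Hence $\phi_c(U;\nu)\ge\widetilde\phi(U)$ wherever $\widetilde\phi>0$, and $\nu\to0$ contradicts $\widetilde\phi(U_1)>\phi_c(U_1)$. I expect the main technical nuisance to be precisely this no-crossing argument at the singular boundary $U=0$ (compounded, when $m<1$, by the blow-up of the factor $U^{m-1}$) and the uniformity of $k_1,k_2$ in $\gamma\in[0,\gamma_0]$; the substantive point, by contrast, is the elementary but sharp threshold computation in part~(b).
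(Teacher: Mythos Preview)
Your proposal is correct and follows essentially the same route as the paper: linear barriers above and below, the comparison lemma, and the limit $\nu\to0$; maximality is likewise obtained from the construction of $\phi_c$ as a decreasing limit of the $\phi_c(\cdot;\nu)$. Your choice of slope $z=\alpha c/(\alpha+1)$ in part~(b) is the right one (it is the minimizer of $P(z)=z^{\alpha+1}-cz^\alpha+m\sigma_0$, and $P(z)<0$ there is exactly~\eqref{eq:cond.below.c}); the paper writes $z=c/(\alpha+1)$, which is a typo, since that value does not give $P(z)<0$ under~\eqref{eq:cond.below.c} alone and would not produce the stated bound $\frac{\alpha c}{\alpha+1}U\le\phi_c(U)$. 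Your caution about the convection term when $m<1$ (so $U^{m-1}$ blows up at $0$) is well placed---the paper's displayed sufficient condition in~(a) tacitly uses $U^{m-1}\le1$---but note that this regime only occurs when $\alpha<1$, where $(k_1+k_2U)^{1-\alpha}$ grows sublinearly in $k_2$, so a barrier of the form $k_1+k_2U^m$ (whose derivative carries the same $U^{m-1}$ factor) gives a clean fix without the localization you sketch.
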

\begin{proof}
(a) We consider $\phi _1(U)=k_1 + k_2 U$  and $\phi _2(U) = \phi _c(U;\nu)$ for certain  $k_1,k_2>0$ to be chosen. The inequality $F_1(U;\phi _1)>F_2(U;\phi _1)$ in [0,1] is equivalent to
	$$
	c+\gamma (k_1 + k_2)^{1-\alpha} + \frac{m  |\min h(U)|}{k_1^\alpha} \leq k_2,
	$$
	and such a couple of values $k_1$ and $k_2$ must exist, the exact threshold does not matter. We finish by taking $\nu<k_1$ and letting $\nu\to 0$.
	
\noindent (b) Now we take  $\phi _1(U)=\phi _c(U;\nu)$  and $\phi _2(U) = zU$ for a certain positive value of $z$ to be determined. In order to have $F_1(U;\phi _2)>F_2(U;\phi _2)$  we need
\begin{equation*}
\label{eq:condition.comparison}
c+\left(\gamma m z^{1-\alpha} - \frac{mh(U)}{z^\alpha U}\right)U^{m-\alpha} > z.
\end{equation*}
Since the left-hand side is greater or equal than $c-\frac{m\sigma_0}{z^\alpha}$ for all $U\in[0,1]$, 	it will be enough to get a value $z>0$ such that  $P(z):=z^{\alpha+1} - cz^\alpha + m\sigma_0< 0$. This can be fulfilled if~\eqref{eq:cond.below.c} holds,  since this is precisely the required condition for  the \lq\lq fractional polynomial''  $P$  to reach negative values in $\mathbb{R}_+$. ~If~\eqref{eq:cond.below.c} holds, the value $z=c/(\alpha+1)$ will do the job. Therefore $\phi _1(U)\geq \phi _2(U)$ in~$[0,1]$. We finish by letting $\nu \to 0$. Maximality comes from the way we have constructed $\phi_c$.
	\end{proof}

\noindent\emph{Remark. } Given $c_0,\gamma_0>0$, we can choose constants $k_1,k_2>0$ in~\eqref{eq:bound.above.phi_c} valid for all $c\le c_0$, $\gamma\le\gamma_0$. Hence, since $\phi_c$ is maximal, the line $\phi (U)=k_1 + k_2 U$ represents a universal barrier for \emph{all} trajectories when the parameters are in this range, which yields a bound for the values of $\phi$.

\medskip

Condition~\eqref{eq:cond.below.c} provides an upper bound for the critical speed $c(\gamma)$, that we define later, independent of the convection parameter~$\gamma$. If we are a bit more careful in the comparison and do not disregard the effect of convection we can obtain a better bound, as we shall see next.

\begin{Lemma}\label{lema_pp_2}
	Suppose that
	\begin{equation}
	\label{eq:condition.c.convection}
	\left(\frac{c}{\alpha+1}\right)^{\alpha+1}\alpha^{\alpha} +  \left(\frac{\gamma m}{\alpha+1}\right)^{\frac{1}{\alpha}+1}\alpha > m\sigma_0,
	\end{equation}
	where $\sigma_0$ is as in the previous lemma. Then $U^{(c)} =1$ and $\phi_c(1) > 0$.
\end{Lemma}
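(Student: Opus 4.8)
The plan is to mimic the strategy used in Lemma~\ref{lema_pp_1}(b), but now retaining the convection term in the comparison rather than discarding it. As before, we compare the trajectory $\phi_1(U)=\phi_c(U;\nu)$ of system~\eqref{eq:linearizable_system} against a straight line $\phi_2(U)=zU$ for a suitable $z>0$. Using the comparison lemma stated just before Lemma~\ref{lema_pp_1}, it suffices to verify that $\phi_1(0)=\nu>0=\phi_2(0)$ (trivially true after the usual trick of taking $\nu$ as small as we like and letting $\nu\to0$ at the end) and that $F_1(U,\phi_2)>F_2(U,\phi_2)$ for $U\in(0,1)$, where $F_1$ is the right-hand side of~\eqref{eq:trajectories} and $F_2(U,\phi)=z$. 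Writing this out, the condition $F_1(U,zU)>z$ becomes
$$
c+\gamma m U^{m-1}(zU)^{1-\alpha}-\frac{f(U)}{(zU)^{\alpha}}>z,
$$
that is, recalling $f(U)=mU^{m-1}h(U)$,
$$
\Big(\gamma m z^{1-\alpha}-\frac{m h(U)}{z^{\alpha}U}\Big)U^{m-\alpha}>z-c.
$$

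First I would bound the left-hand side from below uniformly in $U\in[0,1]$. Since $h(U)/U\le\sigma_0$ and $m-\alpha>0$, and since we only need a lower bound, I discard the (nonnegative, when $h\ge0$; in general harmless) convection part inside the bracket carefully: the key observation is that $\gamma m z^{1-\alpha}U^{m-\alpha}$ is minimized as $U\to1$ only if $m>\alpha$, so more robustly one writes the bracket as $\gamma m z^{1-\alpha}-mh(U)/(z^\alpha U)\ge \gamma m z^{1-\alpha}-m\sigma_0/z^\alpha$ and, if this is nonnegative, multiplies by $U^{m-\alpha}\le 1$ to get a valid lower bound only in the wrong direction — so instead the clean route is to choose $z$ so that $\gamma m z^{1-\alpha}-m\sigma_0/z^\alpha$ has a sign making the estimate work. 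The natural choice, paralleling part~(b), is $z=c/(\alpha+1)$, together with splitting the ``budget'': we want
$$
\frac{\gamma m}{(\alpha+1)^{1/\alpha}}\,c^{1-\alpha}\cdot\frac{1}{c^{1-\alpha}}\ \text{terms to combine into}\ z-c=-\frac{\alpha c}{\alpha+1}.
$$
Concretely, with $z=c/(\alpha+1)$ the inequality to be satisfied for all $U\in[0,1]$ reduces (after multiplying through by $z^\alpha=c^\alpha/(\alpha+1)^\alpha$ and by $(\alpha+1)$) to a ``fractional polynomial'' inequality, and I expect it to be exactly the statement that $P(z):=z^{\alpha+1}-cz^{\alpha}+m\sigma_0-\gamma m z^{1-\alpha}\cdot(\text{something})<0$ at $z=c/(\alpha+1)$; matching constants should turn this into precisely hypothesis~\eqref{eq:condition.c.convection}. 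So the second step is the bookkeeping: substitute $z=c/(\alpha+1)$, collect powers of $c$ and $\gamma$, and check that the resulting numerical inequality is equivalent to~\eqref{eq:condition.c.convection}.

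Once $F_1(U,\phi_2)>F_2(U,\phi_2)$ is established on $(0,1)$, the comparison lemma gives $\phi_c(U;\nu)\ge zU$ on $[0,1]$; letting $\nu\to0$ yields $\phi_c(U)\ge \frac{c}{\alpha+1}U$ on $[0,1]$. Combined with the universal upper barrier $\phi_c(U)<k_1+k_2U$ from Lemma~\ref{lema_pp_1}(a) (or its Remark), this shows $\phi_c(U)>0$ throughout $(0,1]$, hence $U^{(c)}=1$ by definition of $U^{(c)}$, and in particular $\phi_c(1)\ge c/(\alpha+1)>0$, which is the assertion. The main obstacle I anticipate is purely algebraic: getting the bracket estimate to go in the right direction despite the factor $U^{m-\alpha}$ (which is $\le1$, the ``wrong'' way for a lower bound when the bracket is positive) — this forces one to be careful about where the convection term is evaluated, and is presumably why the exponent in~\eqref{eq:condition.c.convection} is $\tfrac1\alpha+1$ rather than something simpler. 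I would resolve it by noting that for the comparison one does not need the inequality at $\phi_2=zU$ alone but may instead use the alternative hypothesis $F_1(U,\phi_1)>F_2(U,\phi_1)$ of the comparison lemma, or by choosing $z$ adaptively; but most likely the intended argument simply uses $U^{m-\alpha}\le 1$ after arranging the bracket to be \emph{negative} of controlled size, so that multiplying by $U^{m-\alpha}\le1$ \emph{weakens} it in a way that is still enough. Verifying that this weakening is harmless is the one genuinely delicate point; everything else is routine.
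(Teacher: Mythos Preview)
Your overall strategy matches the paper's: compare $\phi_c(U;\nu)$ against the line $\phi_2(U)=zU$, replace $h(U)/U$ by its supremum $\sigma_0$, and reduce to the requirement that $P(z):=z^{\alpha+1}-cz^{\alpha}-\gamma m z+m\sigma_0<0$ for some $z>0$. Your worry about the factor $U^{m-\alpha}$ is also easily resolved along the lines you suspect: after the substitution $h(U)/U\mapsto\sigma_0$, the ``worst scenario'' is $\gamma z-\sigma_0<0$, and then $-m(\gamma z-\sigma_0)U^{m-\alpha}$ is maximised at $U^{m-\alpha}=1$, giving exactly $P(z)$. There is no subtlety there.

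The genuine gap is your choice of $z$. Plugging $z=c/(\alpha+1)$ into $P$ gives
\[
P\!\left(\tfrac{c}{\alpha+1}\right)=-\alpha\left(\tfrac{c}{\alpha+1}\right)^{\alpha+1}-\tfrac{\gamma m c}{\alpha+1}+m\sigma_0,
\]
so $P<0$ becomes $\alpha(c/(\alpha+1))^{\alpha+1}+\gamma m c/(\alpha+1)>m\sigma_0$, which is \emph{not} hypothesis~\eqref{eq:condition.c.convection}: the first term carries $\alpha$ instead of $\alpha^{\alpha}$, and the second is a mixed $c\gamma$-term rather than a pure power $(\gamma m/(\alpha+1))^{1/\alpha+1}\alpha$ of $\gamma m$. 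Taking instead the convectionless minimiser $z=\alpha c/(\alpha+1)$ fixes the first term but still leaves the second as $\gamma m\alpha c/(\alpha+1)$. In short, no choice of $z$ depending on $c$ alone can produce~\eqref{eq:condition.c.convection}; the ``bookkeeping'' you flagged as routine is precisely where a new idea is needed. The paper takes
\[
z_0=\left(\left(\frac{c\alpha}{\alpha+1}\right)^{\alpha+1}+\left(\frac{\gamma m}{\alpha+1}\right)^{\frac{1}{\alpha}+1}\right)^{\frac{1}{\alpha+1}},
\]
which blends both parameters, and then verifies $P(z_0)<0$ under~\eqref{eq:condition.c.convection}.
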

\begin{proof}
	
	Let us take $\phi _2(U)=zU$ for a certain positive constant $z$ and $\phi _1(U)=\phi _c(U;\nu)$ for a certain positive $\nu$. As in the previous lemma,
	$$
	z^{\alpha+1} - cz^\alpha- m\left(\gamma  z - \sigma_0\right)U^{m-\alpha}< 0
	$$
guarantees that we can apply comparison.
In the worst scenario, where $\gamma z-\sigma_0<0$, the left-hand quantity in the inequality is smaller than $P(z):=z^{\alpha +1} - \gamma m z - c z^\alpha + m\sigma_0$ for all $U\in[0,1]$. Hence, it is enough to have $P(z)<0$ for some positive $z>0$. This is  certainly possible if  if~\eqref{eq:condition.c.convection} holds, since then $P(z_0)< 0$ for
	$$
	z_0:= \left(\left(\frac{c\alpha}{\alpha+1}\right)^{\alpha+1} + \left(\frac{\gamma m}{\alpha+1}\right)^{\frac{1}{\alpha}+1} \right)^{\frac{1}{\alpha+1}}.
	$$
We finish by taking $\nu\to 0$.
\end{proof}

Our next result shows that if $\gamma$ is small then $c(\gamma)$ has to be positive.
\begin{Lemma}\label{lema_pp_3}
	There exists a constant $\gamma^*>0$ such that if $\max\limits_{U\in [0,1]} \displaystyle\int_0^U f(u)\,{\rm d}u >0$, $\gamma< \gamma^*$ and $c\leq 0$ then no trajectory connects the points $(0,0)$ and $(0,1)$.
\end{Lemma}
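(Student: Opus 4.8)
The plan is to argue by contradiction: from the trajectory equation~\eqref{eq:trajectories} I would extract an integral identity which, combined with the universal a priori bound for trajectories supplied by Lemma~\ref{lema_pp_1} and the remark following it, forces the quantity $\int_0^{U}f$ (over a suitable range of $U$) to be small together with $\gamma$; since by assumption this quantity is bounded below by a fixed positive number, a contradiction follows once $\gamma$ is small enough.

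In detail, suppose that for some $c\le0$ and some $\gamma\ge0$ there is a \emph{connecting trajectory}, that is, a solution $\phi$ of~\eqref{eq:trajectories} with $\phi>0$ on $(0,1)$ and $\phi(U)\to0$ as $U\to0^+$ and as $U\to1^-$. Put $I^*:=\max_{U\in[0,1]}\int_0^{U}f(u)\,\mathrm du$, which is positive by hypothesis, and choose $U^*\in(0,1]$ realizing this maximum (necessarily $U^*>0$, $\int_0^{U^*}f=I^*$, and $\phi(U^*)\ge0$). First I would invoke the remark after Lemma~\ref{lema_pp_1}, applied with $c_0=\gamma_0=1$: it gives constants $k_1,k_2>0$ depending only on $m$, $p$, $h$ such that every trajectory of~\eqref{eq:trajectories} with $c\le1$ and $\gamma\le1$ lies below the line $\phi=k_1+k_2U$, so in particular $0\le\phi(U)\le k_1+k_2=:K$ on $[0,1]$. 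I would also note that $f$, $\phi^\alpha$ and $U^{m-1}\phi$ are integrable on $(0,U^*)$, the only delicate point being near $U=0$, where $|f(U)|=mU^{m-1}|h(U)|\le CU^m$ (because $h\in C^1(\overline{\mathbb{R}}_+)$ with $h(0)=0$), while $\phi^\alpha\le K^\alpha$ and $U^{m-1}\phi\le KU^{m-1}$ with $m>0$.

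Next I would derive the identity. Multiplying~\eqref{eq:trajectories} by $\phi^\alpha$ and using $\phi^\alpha\,\mathrm d\phi/\mathrm dU=\tfrac1{\alpha+1}\,\mathrm d(\phi^{\alpha+1})/\mathrm dU$ yields, on $(0,U^*)$,
\[
\frac1{\alpha+1}\frac{\mathrm d}{\mathrm dU}\phi^{\alpha+1}=c\,\phi^\alpha+\gamma m\,U^{m-1}\phi-f(U).
\]
Integrating and letting the left endpoint tend to $0$ (the boundary term $\phi^{\alpha+1}\to0$ there, and all integrals converge by the integrability just noted), I obtain
\[
I^*=\int_0^{U^*}f(u)\,\mathrm du=-\frac1{\alpha+1}\,\phi(U^*)^{\alpha+1}+c\int_0^{U^*}\phi^\alpha\,\mathrm dU+\gamma m\int_0^{U^*}U^{m-1}\phi\,\mathrm dU.
\]
Since $c\le0$ and $\phi\ge0$, the first two terms on the right are $\le0$, and the last one is at most $\gamma m\int_0^{1}K\,U^{m-1}\,\mathrm dU=\gamma K$. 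Hence $I^*\le\gamma K$. Choosing $\gamma^*:=\min\{1,I^*/K\}>0$, a constant that depends only on $m$, $p$, $h$, we reach a contradiction whenever $\gamma<\gamma^*$, since then $I^*\le\gamma K<\gamma^*K\le I^*$. Thus no connecting trajectory can exist for $c\le0$ and $\gamma<\gamma^*$, which is the assertion of the lemma.

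I expect the only real obstacle to be controlling the weighted flux integral $\gamma m\int_0^{U^*}U^{m-1}\phi\,\mathrm dU$ by a constant independent of $\gamma$ — which is precisely what the universal barrier of Lemma~\ref{lema_pp_1} provides — and, relatedly, ensuring there is no circularity: one must fix the barrier constant $K$ on a parameter range such as $\{c\le1,\ \gamma\le1\}$ and only afterwards define $\gamma^*$ in terms of $K$. The limiting argument in the integral identity is routine, using $\phi(0^+)=0$ together with $h\in C^1$, $h(0)=0$.
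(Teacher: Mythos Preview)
Your proof is correct and follows essentially the same route sketched in the paper: multiply the trajectory equation~\eqref{eq:trajectories} by $\phi^\alpha$, integrate, and use the universal barrier from the remark after Lemma~\ref{lema_pp_1} to make the convection term $\gamma m\int_0^{U^*} s^{m-1}\phi(s)\,\mathrm{d}s$ small with $\gamma$, forcing $I^*\le\gamma K$ and hence a contradiction. Your treatment is in fact more detailed than the paper's sketch, carefully checking integrability near $U=0$ and fixing the parameter window $\{c\le1,\ \gamma\le1\}$ before defining $\gamma^*$, which cleanly avoids any circularity.
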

The proof  is similar to the one  of~\cite[Proposition 4.3]{Aronson-Weinberger-1978}, once one recalls that any trajectory $\phi (U)$ is uniformly bounded for $\gamma\in[0,\gamma_0]$ and $c\le0$; see the remark after Lemma~\ref{lema_pp_1}.  This allows to make the term $\gamma m \int_0^U s^{m-1} \phi (s)\,\textrm{d}s$,  appearing when we integrate the equation of the trajectories~\eqref{eq:trajectories} multiplied by $\phi^\alpha$, arbitrarily small by taking $\gamma$ small, which is enough to proceed.

In view of these results, the \emph{critical speed}
$c(\gamma):=\inf\{c>0: U^{(c)}=1, \phi _c(1) >0 \}$
is well defined whenever $\max\limits_{U\in [0,1]} \displaystyle\int_0^U f(u)\,{\rm d}u >0$ and $\gamma\in [0,\gamma^*)$, and satisfies
$$
0\leq c(\gamma) \leq \left(m\sigma_0 - \alpha \left(\frac{\gamma m}{\alpha+1}\right)^{\frac{1}{\alpha} + 1} \right)^{\frac{1}{\alpha+1}} \frac{\alpha+1 } {\alpha^{\frac{\alpha}{\alpha+1}}}.
$$

\noindent\emph{Remark. } Proceeding as in the proof of~\cite[Proposition 4.3]{Aronson-Weinberger-1978}, it is easy to check that $c(0)>0$ also when $p\neq2$.

\medskip

The proof of the  next  lemma is similar to that of the corresponding ones in~\cite[Lemma 4.2 and Proposition 4.5]{Aronson-Weinberger-1978}.

\begin{Lemma}\label{lema_pp_4}
	If $0 \leq c < d$ and $T_c$ is not empty, then $\phi _d(U) < \phi _c(U)$ in $(0,U^{(c)}]$. Moreover,  $\lim\limits_{c \rightarrow d} \phi _c(U)=\phi _d(U)$. Thus, the family $\{T_c\}$ is continuous in the parameter $c$ whenever $c>0$.
\end{Lemma}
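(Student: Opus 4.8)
The plan is to obtain both assertions from the comparison lemma for the trajectory equation~\eqref{eq:trajectories} stated just before Lemma~\ref{lema_pp_1}, in the spirit of~\cite{Aronson-Weinberger-1978}, keeping track only of the extra bookkeeping forced by the parameters $m$, $p$ and $\gamma$. Writing the right-hand side of~\eqref{eq:trajectories} as $F_c(U,\phi)=c+\gamma m U^{m-1}\phi^{1-\alpha}-f(U)\phi^{-\alpha}$, the structural point is that $F_c$ depends monotonically on $c$, so that for $c<d$ the two vector fields $F_c$ and $F_d$ are strictly ordered at every point of $S$; this is the ingredient that drives the comparison lemma and produces the ordering of $\phi_c$ and $\phi_d$. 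Because~\eqref{eq:trajectories} is singular on $\{\phi=0\}$, every comparison has to be carried out on compact subintervals of $\{U\in(0,1):\phi_c(U)>0\}$ and then transported to the endpoints $U=0$ and $U=U^{(c)}$ using the monotone structure of the construction of $\phi_c=\lim_{\nu\to0}\phi_c(\cdot;\nu)$.

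\textbf{Separation.} First I would prove the non‑strict inequality on $(0,U^{(c)})$ by applying the comparison lemma on each interval $[a,b]$ with $0<a<b<U^{(c)}$, using the known relative position of the two trajectories for $U$ small (both tend to $0$ as $U\to0^{+}$) to fix which of $\phi_c$, $\phi_d$ is the upper function, compatibly with the order of the fields, and then letting $a\downarrow0$. The value at $U=U^{(c)}$ is recovered by continuity, distinguishing $U^{(c)}<1$ (where $\phi_c(U^{(c)})=0$, so one adds the observation that $\phi_d$ cannot remain positive past the point where $\phi_c$ vanishes) from $U^{(c)}=1$ (where $\phi_c(1)>0$ by Lemma~\ref{lema_pp_2}). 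To upgrade to the strict inequality $\phi_d(U)<\phi_c(U)$, suppose the curves touched at some interior $U_0$ with $\phi_c(U_0)=\phi_d(U_0)=:\phi_0>0$; near $U_0$ both solve~\eqref{eq:trajectories} smoothly (since $\phi_0>0$), hence $(\phi_c-\phi_d)'(U_0)=F_c(U_0,\phi_0)-F_d(U_0,\phi_0)\neq0$, while $\phi_c-\phi_d$ has a constant sign and a zero at $U_0$, which forces its derivative there to vanish --- a contradiction. Thus the two curves stay strictly separated on $(0,U^{(c)}]$.

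\textbf{Continuity in $c$.} The separation just obtained makes $e\mapsto\phi_e(U)$ monotone for each fixed $U$, so the one‑sided limits $\lim_{e\to d^{\pm}}\phi_e(U)$ exist; the universal barrier $\phi_e(U)<k_1+k_2U$, valid uniformly for $e$ in a bounded range (remark after Lemma~\ref{lema_pp_1}), makes them finite and, together with~\eqref{eq:trajectories}, gives uniform local Lipschitz control of the $\phi_e$ on compact subsets of $\{\phi>0\}$. By Arzel\`a--Ascoli and continuous dependence of ODE solutions on parameters, each one‑sided limit solves~\eqref{eq:trajectories} with $c=d$ on the relatively open set where it is positive; since for $U$ small it agrees with the trajectory through the origin associated with the limiting slope, it must coincide with $\phi_d$ by the maximality/uniqueness of the latter, and it is not identically $0$ because it is squeezed between $\phi_d$ and nearby $\phi_e$. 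Hence $\lim_{c\to d}\phi_c(U)=\phi_d(U)$, i.e.\ $\{T_c\}$ is continuous in $c>0$. The step I expect to be the main obstacle is exactly this identification near the free endpoint $U^{(c)}$: ruling out that the limiting curve detaches from $\{\phi=0\}$ at the wrong place, or is some other trajectory through the origin, requires combining the monotonicity in the speed $c$ with the nesting of the auxiliary family $\{\phi_e(\cdot;\nu)\}$ in $\nu$, not ODE stability alone.
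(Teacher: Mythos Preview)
Your approach is essentially that of the paper, which simply refers to \cite[Lemma~4.2 and Proposition~4.5]{Aronson-Weinberger-1978}: exploit the monotone dependence of $F_c(U,\phi)=c+\gamma mU^{m-1}\phi^{1-\alpha}-f(U)\phi^{-\alpha}$ on $c$, apply the comparison lemma for~\eqref{eq:trajectories}, and pass to limits for continuity.

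There is, however, one concrete error. You accept the direction of the inequality as printed and assert that ``$\phi_d$ cannot remain positive past the point where $\phi_c$ vanishes''. Your own setup gives the opposite. Since $F_c<F_d$ on $S$, the comparison lemma applied with $\phi_1=\phi_d(\cdot;\mu)$, $\phi_2=\phi_c(\cdot;\nu)$ and $\mu>\nu>0$ yields $\phi_d(\cdot;\mu)\ge\phi_c(\cdot;\nu)$ on $[0,1]$; letting first $\nu\to0$ and then $\mu\to0$ gives $\phi_d\ge\phi_c$, hence $U^{(d)}\ge U^{(c)}$. This is the direction consistent with Lemma~\ref{lema_pp_1}(b), where the lower bound $\phi_c(U)\ge\frac{\alpha c}{\alpha+1}U$ grows with $c$, and with the definition of $c(\gamma)$ as the \emph{infimum} of the speeds for which $\phi_c(1)>0$. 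So the inequality in the statement is written in the wrong order; with the correction $\phi_c<\phi_d$ on $(0,U^{(c)}]$, your touching-point argument for strictness and your continuity argument go through unchanged. A smaller point: in your separation step you appeal to ``the known relative position of the two trajectories for $U$ small'' to start the comparison, but this relative position is exactly what is in question; the clean route---and the one in \cite{Aronson-Weinberger-1978}---is to compare the approximants $\phi_c(\cdot;\nu)$ and $\phi_d(\cdot;\mu)$ directly, where the strict separation at $U=0$ is built in, and then let $\nu,\mu\to0$.
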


Taking into account all the above results we obtain, following the proof of~\cite[Theorem 4.1]{Aronson-Weinberger-1978}, the existence of the desired travelling wave for all small nonnegative values of $\gamma$.

\begin{Theorem}
\label{thm:existe.tw.convection}
Assume~\eqref{eq:slow.regime}  and $\gamma\in[0,\gamma_*)$, with $\gamma_*$ as in Lemma~\ref{lema_pp_3}.
	If $c=c(\gamma)$, there exists a unique trajectory connecting the points~$(0,0)$ and $(1,0)$ in system~\eqref{eq:linearizable_system}.
This trajectory yields a finite wavefront for the equation in~\eqref{eq:main}. Such a wavefront is unique up to translations.
\end{Theorem}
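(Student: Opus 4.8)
The plan is to combine the preparatory lemmas of this subsection with the phase-plane analysis of Aronson and Weinberger. First I would record the easy half: whenever $c$ is large enough that Lemma~\ref{lema_pp_2} applies (so $U^{(c)}=1$ and $\phi_c(1)>0$), the maximal trajectory $T_c$ connects $(0,0)$ to a point $(1,\phi_c(1))$ strictly inside $S$, and by Lemma~\ref{lema_pp_3} together with the definition of $c(\gamma)$ we know that for $\gamma\in[0,\gamma_*)$ the infimum $c(\gamma)=\inf\{c>0:U^{(c)}=1,\ \phi_c(1)>0\}$ is a well-defined positive number. The heart of the argument is to show that at the borderline value $c=c(\gamma)$ one has $\phi_{c(\gamma)}(1)=0$, i.e. the trajectory does not overshoot the equilibrium $(1,0)$ but enters it. For this I would use the continuity statement of Lemma~\ref{lema_pp_4}: the map $c\mapsto\phi_c(U)$ is monotone decreasing and continuous in $c$ on $(0,\infty)$, so $\phi_{c(\gamma)}(1)=\lim_{c\downarrow c(\gamma)}\phi_c(1)$; if this limit were positive, an open-ness argument (combined with the monotonicity $\phi_d<\phi_c$ for $d<c$) would force $U^{(c)}=1$ and $\phi_c(1)>0$ for a range of $c$ slightly below $c(\gamma)$, contradicting the definition of the infimum. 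On the other hand $\phi_{c(\gamma)}(1)$ cannot be negative since $\phi_c\geq0$ throughout. Hence $\phi_{c(\gamma)}$ is a trajectory in $S$ joining $(0,0)$ to $(1,0)$.

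Next I would check that this connecting trajectory actually produces a \emph{finite} wavefront in the sense of Subsection~\ref{subsect:TWs}. Using the quadrature formula~\eqref{eq:support_tw},
\[
\xi_1-\xi_0 = m\int_{U(\xi_1)}^{U(\xi_0)}\frac{U^{m-1}}{(\phi(U))^\alpha}\,\mathrm{d}U,
\]
together with the lower bound from Lemma~\ref{lema_pp_1}(b) (or a local linear lower bound near $U=0$ obtained from the same comparison argument, valid since $c(\gamma)>0$), one sees $\phi_{c(\gamma)}(U)\geq c_1 U$ near $0$, so the integrand behaves like $U^{m-1-\alpha}$, which is integrable because $\alpha<m$ in the slow diffusion regime~\eqref{eq:slow.regime}. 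Thus the front is reached at a finite $\xi$; translating so that $\xi_0=0$ gives the profile $U(\cdot;\gamma)$ with support $\overline{\mathbb{R}_-}$ as in the notation fixed above. Near $U=1$ one checks the trajectory enters $(1,0)$ tangentially to a line with nonzero slope determined by linearising~\eqref{eq:trajectories} at $U=1$, using $h'(1)<0$ from~\eqref{eq:reaction_deriv_in_1}; this gives the exponential-type decay of $1-U$ as $\xi\to-\infty$ and in particular that $U<1$ on $\mathbb{R}$, exactly as for the no-convection wavefront in~\cite{Garriz-plap}.

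Finally, for uniqueness up to translation I would argue that any finite wavefront of~\eqref{eq:constant.convection} corresponds, via $\phi=|(U^m)'|^{p-1}$, to a trajectory of~\eqref{eq:linearizable_system} lying in $\overline S$ and joining $(0,0)$ to $(1,0)$; any such trajectory must lie on or below the maximal trajectory $T_c$ for its speed $c$. If $c>c(\gamma)$, Lemma~\ref{lema_pp_4} gives $\phi_c(1)<\phi_{c(\gamma)}(1)=0$, impossible since $\phi\geq0$; if $c<c(\gamma)$, then by definition of the infimum the maximal trajectory fails to reach $U=1$ inside $S$ (it hits $\phi=0$ at some $U^{(c)}<1$) so no connection exists. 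Hence $c=c(\gamma)$ and the connecting trajectory is forced to be $T_{c(\gamma)}=\phi_{c(\gamma)}$ itself, since it cannot cross the maximal trajectory and cannot dip to $\phi=0$ before $U=1$. Reversing the quadrature then recovers $U$ uniquely up to the choice of $\xi_0$, i.e. up to translation. The main obstacle I anticipate is the boundary-value limiting step, namely rigorously excluding $\phi_{c(\gamma)}(1)>0$: this requires carefully transferring the continuity and monotonicity of Lemma~\ref{lema_pp_4} to the endpoint $U=1$ and an openness argument in $c$, which in~\cite{Aronson-Weinberger-1978} is the delicate part of the proof of their Theorem~4.1 and must be adapted to account for the convection term $\gamma m U^{m-1}\phi^{1-\alpha}$ in~\eqref{eq:trajectories} and the exponents $\alpha\neq1$, $m\neq1$.
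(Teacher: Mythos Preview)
Your approach coincides with the paper's: both defer to the phase-plane argument of Aronson--Weinberger, Theorem~4.1, using the preparatory Lemmas~\ref{lema_pp_1}--\ref{lema_pp_4} exactly as you outline. The existence half of your sketch (continuity in $c$ forcing $\phi_{c(\gamma)}(1)=0$, and integrability of $U^{m-1-\alpha}$ near $U=0$ giving a finite front) is correct and is precisely what the paper has in mind.

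There is, however, a genuine gap in your uniqueness argument for speeds $c>c(\gamma)$. You write that Lemma~\ref{lema_pp_4} gives $\phi_c(1)<\phi_{c(\gamma)}(1)=0$, which would be impossible. But note that you used the \emph{opposite} monotonicity (``$\phi_d<\phi_c$ for $d<c$'') a few lines earlier in the existence argument; from the trajectory equation~\eqref{eq:trajectories} one sees $\partial_c(d\phi/dU)=1>0$, so $\phi_c$ is \emph{increasing} in $c$, and for $c>c(\gamma)$ the maximal trajectory satisfies $\phi_c(1)>0$ and simply overshoots $(1,0)$. More importantly, in the monostable case $a=0$ there \emph{are} non-maximal trajectories connecting $(0,0)$ to $(1,0)$ for every $c>c(\gamma)$ (cf.\ the remark in Subsection~\ref{subsect:TWs} about positive wavefronts for $c>c^*$), and these lie strictly below $T_c$ without touching $\phi=0$. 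What rules them out as \emph{finite} wavefronts is their behaviour near $U=0$: they enter $(0,0)$ tangent to the $U$-axis rather than with slope $c$, so $U^{m-1}/\phi^\alpha$ is not integrable at $0$ in~\eqref{eq:support_tw}. You need to make this dichotomy explicit (slope $c$ versus slope $0$ at the origin, only the former giving a finite front) to close the uniqueness argument; your current formulation via the maximal trajectory alone does not distinguish finite from non-finite connections.
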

Let us recall that
\begin{equation}\label{eq:behaviour.origin}
\phi _{c(\gamma)}(U)=c(\gamma) U(1+o(1))\quad\text{when }U\sim 0.
\end{equation}
As for the behaviour for $U\sim 1$ we have the following result, which has already been obtained for the problem without convection and certain reaction nonlinearities in~\cite{Audrito-2019,Audrito-Vazquez-2017}.
\begin{Lemma}\label{lemma:behaviour_near_1}
Under the assumptions of Theorem~\ref{thm:existe.tw.convection},  the maximal trajectory connecting $(0,0)$ with $(1,0)$ enters the latter point with behaviour
\begin{equation}\label{eq:behaviour.U.1}
\begin{aligned}
&\phi _{c(\gamma)}(U)=C_{p,\gamma}(1-U)^{\mu_{p,\gamma}}(1+o(1))\quad\text{when }U\sim 1,\quad\text{where}\\
&C_{p,\gamma}=
\begin{cases}
\left(\frac{m|h'(1)|}{c(\gamma)}\right)^{p-1}& \mbox{if } p>2, \\
\frac{-(c(\gamma)+\gamma m)+ \sqrt{(c(\gamma)+\gamma m)^2+4m|h'(1)|}}{2}\quad  & \mbox{if } p=2, \\
\frac{|h'(1)|}{\gamma} & \mbox{if }p\in(1,2), \ \gamma>0,
\end{cases}
\\
&\mu_{p,\gamma}=\begin{cases}
p-1&\text{if }p\ge2,\\
1&\text{if } p\in(1,2),\ \gamma>0.
\end{cases}
\end{aligned}
\end{equation}
\end{Lemma}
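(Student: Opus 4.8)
The plan is to linearize the trajectory equation~\eqref{eq:trajectories} near the equilibrium point $(U,\phi)=(1,0)$ and to determine the rate at which the maximal connecting trajectory $\phi_{c(\gamma)}$ enters this point. First I would introduce the variable $s:=1-U$, so $s\to0^+$ along the trajectory, and rewrite the ODE. Since $f(U)=mU^{m-1}h(U)$ and $h(1)=0$, $h'(1)<0$, we have $f(U)=-m|h'(1)|(1-U)(1+o(1))=-m|h'(1)|s(1+o(1))$ near $U=1$. Substituting $U=1-s$ into~\eqref{eq:trajectories} and recalling $U^{m-1}=1+o(1)$, the equation becomes
\begin{equation*}
-\frac{{\rm d}\phi}{{\rm d}s}=c(\gamma)+\gamma m\,\phi^{1-\alpha}(1+o(1))+\frac{m|h'(1)|s}{\phi^\alpha}(1+o(1)).
\end{equation*}
The next step is to make the ansatz $\phi(s)=C s^{\mu}(1+o(1))$ for suitable $C>0$, $\mu>0$, plug it in, and balance the dominant terms as $s\to0^+$. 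The left-hand side is of order $s^{\mu-1}$; the three terms on the right are of orders $1$, $s^{\mu(1-\alpha)}$, and $s^{1-\mu\alpha}$ respectively. Recalling $\alpha=1/(p-1)$, the dominant balance splits into the three regimes in the statement: for $p>2$ one has $\alpha<1$, the convection term $\gamma m\phi^{1-\alpha}$ is lower order, and balancing $s^{\mu-1}$ against $s^{1-\mu\alpha}$ forces $\mu-1=1-\mu\alpha$, i.e.\ $\mu=(p-1)\cdot\frac{2}{p}$... wait—more carefully, $\mu(1+\alpha)=2$, but we also need the left side to balance the diffusion term $m|h'(1)|s/\phi^\alpha$, which after the substitution $\phi=Cs^\mu$ is $s^{1-\mu\alpha}$; equating exponents $\mu-1=1-\mu\alpha$ gives $\mu=2/(1+\alpha)=2(p-1)/p$. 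This does not match $\mu=p-1$, so the correct balance for $p>2$ must instead be between the left-hand side and the \emph{constant} $c(\gamma)$ term: since $\mu>1$ makes $s^{\mu-1}\to0$, we cannot balance with a constant. The resolution, which I would carry out carefully, is that for $p\geq2$ the trajectory behaves with $\phi'(s)\to$ a nonzero constant is impossible; instead $\phi\sim Cs^{p-1}$ makes $\phi^\alpha\sim C^\alpha s$, so $f(U)/\phi^\alpha\sim -m|h'(1)|/C^\alpha$ is a nonzero constant, which balances the $c(\gamma)$ term: $-\mu C s^{\mu-1}$ is then the lower-order correction, and at leading order $0=c(\gamma)-m|h'(1)|/C^\alpha$, giving $C^\alpha=m|h'(1)|/c(\gamma)$, i.e.\ $C_{p,\gamma}=(m|h'(1)|/c(\gamma))^{p-1}$ with $\mu_{p,\gamma}=p-1$, exactly as claimed. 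When $p=2$ we have $\alpha=1$, so now the term $\gamma m\phi^{1-\alpha}=\gamma m$ is also a constant, and balancing $c(\gamma)+\gamma m+m|h'(1)|/C-(\text{correction})$... more precisely $-\phi'=c(\gamma)+\gamma m+m|h'(1)|s/\phi$ with $\phi=Cs$: $-C=c(\gamma)+\gamma m+m|h'(1)|/C$... no, $-C$ should be $-(c(\gamma)+\gamma m)+\ldots$; rearranging $\phi'=C$, one gets $C^2+(c(\gamma)+\gamma m)C-m|h'(1)|=0$, whose positive root is the stated $C_{2,\gamma}$, with $\mu=1$. When $p\in(1,2)$, $\alpha>1$ and the term $\gamma m\phi^{1-\alpha}=\gamma m\phi^{-(\alpha-1)}$ blows up unless it is balanced; with $\phi=Cs$ one finds $\phi^{1-\alpha}\sim C^{1-\alpha}s^{1-\alpha}$ and $m|h'(1)|s/\phi^\alpha\sim m|h'(1)|C^{-\alpha}s^{1-\alpha}$, both of order $s^{1-\alpha}\to\infty$, so to keep the equation consistent their sum must vanish at leading order: $\gamma m C^{1-\alpha}=m|h'(1)|C^{-\alpha}$, giving $C=|h'(1)|/\gamma$, with $\mu=1$; this requires $\gamma>0$, consistent with the hypothesis. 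I would then justify rigorously, rather than merely formally, that the trajectory actually has this asymptotic form: this is done by a sub/supersolution (barrier) argument for the scalar ODE~\eqref{eq:trajectories}, comparing $\phi_{c(\gamma)}$ with $\phi_\pm(s)=(C\pm\varepsilon)s^{\mu}$ on a small interval $s\in(0,s_\varepsilon)$ and using the comparison lemma quoted before Lemma~\ref{lema_pp_1}, together with the already-established maximality of the connecting trajectory and the fact (from Theorem~\ref{thm:existe.tw.convection}) that it does reach $(1,0)$.

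The structure of the argument I would write is therefore: (1) reduce to the scalar ODE near $U=1$ and perform the change $s=1-U$; (2) identify, by the dominant-balance heuristic above, the candidate exponent $\mu_{p,\gamma}$ and constant $C_{p,\gamma}$ in each of the three parameter regimes; (3) verify that $\phi_\pm=(C_{p,\gamma}\pm\varepsilon)s^{\mu_{p,\gamma}}$ are respectively a super- and a sub-solution of the trajectory equation on a sufficiently small right-neighbourhood of $s=0$, by direct substitution and absorbing the $o(1)$ and lower-order terms for $s$ small; (4) invoke the comparison lemma to trap $\phi_{c(\gamma)}$ between $\phi_-$ and $\phi_+$, and let $\varepsilon\to0$ to conclude~\eqref{eq:behaviour.U.1}. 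A minor point to handle with care is that in step (3) the correct comparison function in the $p\ge2$ case is not a pure power but $\phi_\pm=C_{p,\gamma}s^{p-1}(1\pm\varepsilon s^{\kappa})$ for a suitable small $\kappa>0$ capturing the next-order correction coming from the $-\mu C s^{\mu-1}$ term on the left; matching this correction is what pins down the leading constant uniquely.

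The main obstacle I expect is step (3) in the borderline case $p=2$ and, to a lesser extent, the case $p\in(1,2)$: there the diffusion term, the convection term, and the derivative term are all of the same order, so one cannot treat any of them perturbatively, and the barrier functions must be chosen to satisfy a genuinely quadratic (resp.\ algebraic) matching condition — precisely the equation whose root is $C_{p,\gamma}$. One must check that the strict inequality in the defining condition of $C_{p,\gamma}$ (e.g.\ that the discriminant is positive and the relevant root is simple) gives enough room to insert the $\pm\varepsilon$ perturbation and still retain the strict sub/supersolution inequalities for $s$ small. A secondary technical nuisance is controlling the $o(1)$ factors uniformly, which requires knowing that $\phi_{c(\gamma)}(s)/s$ stays bounded above and below as $s\to0^+$; the upper bound follows from the universal barrier $\phi<k_1+k_2U$ of Lemma~\ref{lema_pp_1}(a) localized near $U=1$ (where it gives $\phi\lesssim s$ only after one first shows $\phi(1)=\phi_{c(\gamma)}(1)=0$, which is part of Theorem~\ref{thm:existe.tw.convection}), and the lower bound from a crude sub-solution of the form $\delta s^{\mu_{p,\gamma}}$.
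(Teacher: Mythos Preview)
Your approach is genuinely different from the paper's. The paper does not work with barriers on the scalar trajectory equation; instead it finds coordinates in which the planar system~\eqref{eq:linearizable_system} becomes $C^1$ at the equilibrium $(1,0)$ and then reads off the asymptotics from the linearization. For $p\in(1,2)$ (so $\alpha>1$) the system~\eqref{eq:linearizable_system} is already linearizable there, with eigenvalues $0$ and $-m\gamma$; the connecting orbit exits along the eigenvector attached to $0$, whose slope gives $C_{p,\gamma}=|h'(1)|/\gamma$, and uniqueness of that orbit follows from checking $\partial_\phi F>0$ along it (repulsivity). For $p\ge2$ the system is \emph{not} $C^1$ at $\phi=0$, so the paper first passes to the variables $(U,Z)$ with $Z$ the derivative of the pressure, which restores linearizability, and then argues the same way. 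What this buys is that the exponent and constant are dictated by linear algebra rather than by hand-built barriers.

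Your route can be pushed through, but step~(3) is more delicate than you indicate, and the naive barriers go the wrong way. For $p>2$, plugging $\phi_\pm=(C_{p,\gamma}\pm\varepsilon)(1-U)^{p-1}$ into $\frac{{\rm d}\phi}{{\rm d}U}-F(U,\phi)$ and letting $U\to1$ gives the limit $-\bigl(c(\gamma)-m|h'(1)|/(C_{p,\gamma}\pm\varepsilon)^\alpha\bigr)$, which is \emph{negative} for $\phi_+$ and \emph{positive} for $\phi_-$: thus $(C+\varepsilon)(1-U)^{p-1}$ is a \emph{sub}solution and $(C-\varepsilon)(1-U)^{p-1}$ a \emph{super}solution of~\eqref{eq:trajectories}, and the comparison lemma (integrating toward $U=1$) only yields ``once $\phi/(1-U)^{p-1}$ exceeds $C+\varepsilon$ it stays above'' and ``once it drops below $C-\varepsilon$ it stays below''---the opposite of trapping. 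The same reversal occurs for $p=2$ and $p\in(1,2)$. The fix is not to add a correction $s^\kappa$ as you propose (that is a red herring), but to combine the comparison with a contradiction: if $\phi/(1-U)^{p-1}\ge C+\varepsilon$ on some $[U_0,1)$ then $F\ge\delta>0$ there, whence $\phi'\ge\delta$ and $\phi$ cannot tend to $0$; if $\phi/(1-U)^{p-1}\le C-\varepsilon$ on $[U_0,1)$ then $F\le-\delta$, and since $\phi(U_0)\le(C-\varepsilon)(1-U_0)^{p-1}\ll\delta(1-U_0)$ for $U_0$ near $1$, $\phi$ hits $0$ strictly before $U=1$, contradicting $U^{(c(\gamma))}=1$. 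This forces $\phi/(1-U)^{p-1}\in(C-\varepsilon,C+\varepsilon)$ near $U=1$ for every $\varepsilon>0$. So your outline is salvageable, but the missing ingredient is precisely this repulsivity-plus-contradiction step---which is what the paper's linearization encodes for free.
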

\begin{proof}
When $p\in(1,2)$,  which means that $\alpha>1$, system~\eqref{eq:linearizable_system} is linearizable for $(U,\phi)\sim(1,0)$. There are two real eigenvalues, $\lambda_-=-m\gamma$ and $\lambda_0=0$, with eigenvectors $v_-=(1,0)$ and $v_0=(1,-|h'(1)|/\gamma)$. Hence we have a trajectory entering $(1,0)$ along the direction of $v_-$, and one or more trajectories exiting $(1,0)$ into $S$ along $v_0$. To show that there is only one exiting orbit behaving like $\phi(U)\sim (|h'(1)|/\gamma)(1-U)$ close to $(1,0)$, we will prove that any such trajectory is  repulsive near~$(1,0)$. Since trajectories satisfy $\phi'(U)=F(U,\phi(U))$,
with
$$
F(U,\phi)= c+\gamma m U^{m-1}\phi ^{1-\alpha} - \frac{f(U)}{\phi ^{\alpha}},
$$
see~\eqref{eq:trajectories}, it is enough to show  that $\partial_\phi F(U,\phi)>0$ along any trajectory of this kind  near $U=1$. Indeed, if $\phi(U)\sim (|h'(1)|/\gamma)(1-U)$, then
$$
\partial_\phi F(U,\phi)=\frac{(1- \alpha)\gamma m U^{m-1}}{\phi ^{\alpha}} + \frac{\alpha f(U)}{\phi ^{\alpha+1}}\sim \frac{\gamma^{\alpha+1} m}{(1-U)^\alpha|h'(1)|^\alpha} \gg 0\quad  \text{for }U\sim 1.
$$

When  $p\geq 2$, following what was done in~\cite[Section 3]{Audrito-Vazquez-2017} for the case without convection, we rewrite the system in terms of the  variables $U$ (the density) and $Z:=\left(\frac{m}{m-\alpha}U^{m-\alpha}\right)'$ (the derivative of the pressure). The new system,
$$
U'=- (p-1)|Z|^{p-1}U,\quad Z'=cZ - |Z|^p-\gamma U |Z|^{p-1}+mU^{m-\alpha-1}h(U),
$$
has the advantage of being linearizable for $(U,Z)\sim (1,0)$. We can now proceed as in the case $p\in(1,2)$, and rewriting the result for the trayectory $Z(U)$ in terms of the variables $U$ and $\phi$ we get the result.
\end{proof}

\noindent\emph{Remark. } (a)
If $p\in(1,2)$,  the slope of the connecting trajectory at $U=1$ goes to $-\infty$ as $\gamma\to 0^+$, resembling the vertical nature of the trajectory in the case $\gamma=0$, which has a behaviour
\begin{equation}\label{eq:gamma.0}
\phi_{c(0)}(U)=\Big(\frac{pm|h'(1)|}{2(p-1)}\Big)^{(p-1)/p}(1-U)^{\frac{2(p-1)}{p}}\quad\text{if }p\in(1,2),\ \gamma=0;
\end{equation}
see~\cite{Audrito-Vazquez-2017}.

\noindent (b) The same behaviours hold true for the only trajectory exiting $(0,1)$ and entering $S$ for any $c>0$ (not necessarily $c(\gamma)$) and $\gamma\ge0$.

\medskip

We now translate the above behaviours for the maximal connecting trajectory to the finite wavefront $U(\xi;\gamma)$ with support $\overline{\mathbb{R}_-}$ and the corresponding pressure
$$
V(\xi;\gamma):=\frac {m}{m-\alpha}(U(\xi;\gamma))^{m-\alpha}.
$$
We start with the behaviour at the origin, where $U$ approaches $0$.
\begin{Lemma}
\label{lema_limite_segunda_derivada}
Assume~\eqref{eq:slow.regime}. Then $\lim\limits_{\xi\to0^-}V'(\xi;\gamma)=- (c(\gamma))^\alpha$, and
$$
\displaystyle\lim\limits_{\xi \to0^-} V''(\xi;\gamma) = \frac{(\gamma c(\gamma) - h'(0))(m-\alpha)}{(p-1)(m-\alpha+1)(c(\gamma))^{1-\alpha}},\quad
\displaystyle\lim\limits_{\xi\to0^-}\Delta_p V(\xi;\gamma) = \frac{(\gamma c(\gamma) - h'(0))(m-\alpha)}{m-\alpha+1}.
$$
\end{Lemma}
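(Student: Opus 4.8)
The plan is to translate the asymptotic expansions of the maximal connecting trajectory $\phi_{c(\gamma)}(U)$ near $U=0$ (already recorded in~\eqref{eq:behaviour.origin}) into expansions of the profile $U(\xi;\gamma)$ and its pressure $V(\xi;\gamma)$ as $\xi\to0^-$. First I would recall that, by definition of the pressure $V=\frac{m}{m-\alpha}U^{m-\alpha}$, one has $V'=-|(U^m)'|^{p-1}\,(\cdot)=-\phi_{c(\gamma)}(U)^{?}$ — more precisely, since $\phi=|(U^m)'|^{p-1}$ and $|(U^m)'|=|(\tfrac{m-\alpha}{m}V)^{\,?}\ldots|$, the cleanest route is to use the relation already appearing in the paper: $V'(\xi)=-\phi_{c(\gamma)}(U(\xi))^{\alpha}\cdot(\text{sign stuff})$. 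Concretely, from $U'=-\phi^\alpha/(mU^{m-1})$ and $V'=mU^{m-1-(m-\alpha)}\cdots$ one checks $V'=-\phi^{\alpha}$ (this is the computation behind~\eqref{profile:property}). Then~\eqref{eq:behaviour.origin}, $\phi_{c(\gamma)}(U)=c(\gamma)U(1+o(1))$, gives immediately $V'(\xi;\gamma)\to-(c(\gamma))^{\alpha}$ as $\xi\to0^-$, which is the first assertion.

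For $V''$, I would differentiate $V'=-\phi^\alpha$ in $\xi$, using the chain rule and the trajectory equation~\eqref{eq:trajectories} for $\mathrm{d}\phi/\mathrm{d}U$:
\[
V''=-\alpha\phi^{\alpha-1}\frac{\mathrm d\phi}{\mathrm dU}\,U'
=-\alpha\phi^{\alpha-1}\Big(c(\gamma)+\gamma mU^{m-1}\phi^{1-\alpha}-\frac{f(U)}{\phi^\alpha}\Big)\Big(-\frac{\phi^\alpha}{mU^{m-1}}\Big),
\]
so that
\[
V''=\frac{\alpha}{mU^{m-1}}\Big(c(\gamma)\phi^{2\alpha-1}+\gamma m U^{m-1}\phi^{\alpha}-f(U)\phi^{\alpha-1}\Big).
\]
Now I would substitute the refined expansion $\phi_{c(\gamma)}(U)=c(\gamma)U+o(U)$ together with $f(U)=mU^{m-1}h(U)=mU^{m-1}(h'(0)U+o(U))=mh'(0)U^m+o(U^m)$, and carry out the asymptotics as $U\to0^+$. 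Each of the three terms is of the form $(\text{const})\,U^{\text{something}}$; the powers must be checked to combine correctly (the degeneracy $m>\alpha$ is what makes everything balance and keeps $V''$ finite and nonzero), and collecting the coefficients should yield
\[
\lim_{\xi\to0^-}V''(\xi;\gamma)=\frac{(\gamma c(\gamma)-h'(0))(m-\alpha)}{(p-1)(m-\alpha+1)(c(\gamma))^{1-\alpha}}.
\]
Finally, for $\Delta_pV$ I would use that in the travelling-wave variable $\Delta_pV=(|V'|^{p-2}V')'=\big(-|V'|^{p-1}\big)'=(p-1)|V'|^{p-2}(-V'')\,$, and then plug in $V'\to-(c(\gamma))^\alpha$ and the value of $V''$ just obtained; since $\alpha=1/(p-1)$, the factor $(p-1)\big((c(\gamma))^\alpha\big)^{p-2}=(p-1)(c(\gamma))^{(p-2)/(p-1)}=(p-1)(c(\gamma))^{1-\alpha}$ exactly cancels the $(p-1)(c(\gamma))^{1-\alpha}$ in the denominator of $V''$, giving $\lim_{\xi\to0^-}\Delta_pV(\xi;\gamma)=\dfrac{(\gamma c(\gamma)-h'(0))(m-\alpha)}{m-\alpha+1}$.

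The routine-but-delicate part, and the main obstacle, is making the expansion of $\phi_{c(\gamma)}$ near $U=0$ precise enough: the statement~\eqref{eq:behaviour.origin} only gives $\phi=c(\gamma)U(1+o(1))$, whereas the formula for $V''$ is a \emph{second-order} statement and in principle needs the next term in the expansion of $\phi$. I would handle this by going back to the trajectory equation~\eqref{eq:trajectories}, writing $\phi(U)=c(\gamma)U+\psi(U)$ with $\psi=o(U)$, and showing from the ODE that $\psi(U)/U\to 0$ with a controlled rate (e.g. $\psi(U)=O(U^{\min\{m-\alpha+1,\,2\}})$), which is precisely the order of smallness that disappears in the limit $\xi\to0^-$; alternatively one can argue directly with $V$ and $V'$ as functions of $\xi$, using that $V(\xi;\gamma)\sim(c(\gamma))^\alpha|\xi|$ near $0$ (which follows from the first assertion) to justify that the error terms vanish. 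One must also be careful that $U\to0^+$ corresponds to $\xi\to0^-$ monotonically, which is guaranteed because $U'<0$ on the support, and that the change of variables $\mathrm d\xi=mU^{m-1}\,\mathrm d\tau$ does not introduce spurious singularities — all of this is consistent with the slow-diffusion regime hypothesis $m>\alpha$ and with the finiteness of the wavefront established in Theorem~\ref{thm:existe.tw.convection}.
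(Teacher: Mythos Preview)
Your outline follows the paper's strategy, but there is a computational error at the very start that propagates through everything: you claim $V'=-\phi^{\alpha}$, but in fact $V'=-(\phi/U)^{\alpha}$. Indeed, $V'=mU^{m-\alpha-1}U'$ together with $U'=-\phi^{\alpha}/(mU^{m-1})$ gives $V'=-\phi^{\alpha}U^{-\alpha}=-(\phi/U)^{\alpha}$. With your identity, $\phi\sim c(\gamma)U\to 0$ would force $V'\to 0$, contradicting the limit $-(c(\gamma))^{\alpha}$ you are trying to establish; the correct conclusion follows precisely because it is $\phi/U$ that tends to $c(\gamma)$. Your displayed formula for $V''$ inherits the same error: the three terms you write down have orders $U^{2\alpha-m}$, $U^{\alpha}$, $U^{\alpha}$, and the first one neither vanishes nor stays bounded for general $m,p$ in the slow diffusion regime.

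Once you correct the identity, the chain-rule computation yields
\[
V''=-\alpha\Big(\frac{\phi}{U}\Big)^{\alpha-1}\Big[\frac{h(U)}{U}-\gamma\,\frac{\phi}{U}+\frac{1}{m}\Big(\frac{\phi}{U}\Big)^{\alpha}\frac{(\phi/U)-c(\gamma)}{U^{\,m-\alpha}}\Big],
\]
which is exactly the expression the paper writes down. The ``second-order'' obstacle you flagged is then isolated in the single limit $\beta:=\lim_{U\to 0^+}\big((\phi/U)-c(\gamma)\big)/U^{m-\alpha}$; the paper evaluates $\beta$ by l'H\^opital, using $\phi/U=(-V')^{1/\alpha}$ so that the derivative of the numerator can be expressed back in terms of $V''$ itself, which turns the computation into a linear equation for the limiting value. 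Your alternative route (writing $\phi=c(\gamma)U+\psi(U)$ and controlling $\psi$ from the trajectory ODE) would also work but is heavier. Finally, there is a sign slip in your last step: since $|V'|=-V'$ near the front, $\big(-|V'|^{p-1}\big)'=(p-1)|V'|^{p-2}V''$, not $(p-1)|V'|^{p-2}(-V'')$; your final formula for $\Delta_pV$ is nonetheless the right one.
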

\begin{proof}
The value of the derivative of the pressure at the origin follows from~\eqref{eq:behaviour.origin},
\begin{equation*}\label{eq:frontera}
\lim\limits_{\xi\to0^-}V'(\xi;\gamma)=\lim\limits_{\xi\to 0^+}  m U^{m-1-\alpha}(\xi;\gamma)U'(\xi;\gamma)=-\lim\limits_{\xi\to 0^-} \left(\frac{\phi _{c(\gamma)}(\xi)}{U(\xi;\gamma)}\right)^\alpha= -(c(\gamma))^\alpha.
\end{equation*}
Starting from~\eqref{eq:original_system} it is easy to check that
$$
V''= -\alpha \left(\frac{\phi }{U}\right)^{\alpha-1}\left[ \frac{h(U)}{U} - \gamma\frac{\phi }{U} +\frac{1}{m}\left(\frac{\phi }{U}\right)^{\alpha} \left(\frac{\big(\phi_c/U\big) - c}{U^{m-\alpha}}\right) \right].
$$
Therefore
$$
\lim\limits_{\xi\to0^-} V ''(\xi;\gamma) =-\alpha(c(\gamma))^{\alpha-1}\left[ h'(0)-\gamma c(\gamma)+\frac{(c(\gamma))^\alpha}{m}  \lim\limits_{\xi\to0^-}\left(\frac{\big(\phi_{c(\gamma)}(\xi)/U(\xi;\gamma)\big)   - c}{U^{m-\alpha}(\xi;\gamma)}\right)\right],
$$
if the limit appearing at the end of the right-hand side exists. Assume that it exists and call it $\beta$. Using l'H\^opital's rule and  the relation $\phi_c/U=(-V')^{1/\alpha}$, that comes from~\eqref{eq:original_system} and the definition of the pressure, we obtain
$$
\beta=\frac{m}{m-\alpha +1}\big(\gamma c(\gamma)-h'(0)\big) (c(\gamma))^{-\alpha},
$$
from where the limit for $V''$ follows.  To check that the limit $\beta$ indeed exists, we can follow the lines of the proof of~\cite[Lemma 2.2]{Du-Quiros-Zhou-Preprint}.

The result for $\Delta_p V$ follows immediately from $\Delta_p V=(p-1)|V'|^{p-2} V''$.
\end{proof}
We now give the behaviour at $-\infty$, where $U$ approaches 1.
\begin{Lemma}
\label{lem:behaviour.pressure.infinity}
Assume~\eqref{eq:slow.regime}.

\noindent{\rm (a)} $\lim\limits_{\xi\to -\infty}V'(\xi;\gamma)=0$.

\noindent{\rm (b)} Given $\gamma\in[0,\gamma_*)$ and $p>1$, there exist constants $\lambda, M>0$, $\xi_*<0$,  such that for all $\xi \in (-\infty, \xi_*]$,
$$
1>U(\xi;\gamma)\geq\begin{cases}
                     1 - M{\rm e}^{-\lambda|\xi|} & \mbox{if } p\ge2, \\
                     1-M\gamma^{\frac{p-1}{2-p}}|\xi|^{-\frac{p-1}{2-p}} & \mbox{if }p\in(1,2),\ \gamma>0.
                   \end{cases}
$$
\end{Lemma}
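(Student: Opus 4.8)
\textbf{Proof proposal for Lemma~\ref{lem:behaviour.pressure.infinity}.}

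The plan is to analyze the maximal connecting trajectory $\phi_{c(\gamma)}(U)$ near the equilibrium $(1,0)$, using the precise asymptotics already established in Lemma~\ref{lemma:behaviour_near_1}, and then translate this information back to the variable $\xi$ via the quadrature formula~\eqref{eq:support_tw}. For part (a), I would argue that since $U(\xi;\gamma)\to1$ as $\xi\to-\infty$ and $V'=-\,(\phi_{c(\gamma)}(U)/U)^\alpha$ (this is the identity $V'=mU^{m-1-\alpha}U'$ combined with $U'=-\phi^\alpha/(mU^{m-1})$ from~\eqref{eq:original_system}), it suffices to note that $\phi_{c(\gamma)}(U)\to0$ as $U\to1^-$; indeed $(1,0)$ is an equilibrium of~\eqref{eq:linearizable_system} and the connecting trajectory enters it, so $\phi_{c(\gamma)}(U)\to\phi_{c(\gamma)}(1)=0$, giving $V'(\xi;\gamma)\to0$.

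For part (b), the key step is to convert the behaviour $\phi_{c(\gamma)}(U)=C_{p,\gamma}(1-U)^{\mu_{p,\gamma}}(1+o(1))$ as $U\sim1$ into a rate of convergence of $U(\xi;\gamma)$ to $1$. From~\eqref{eq:original_system} we have $U'(\xi)=-\phi_{c(\gamma)}(U)^\alpha/(mU^{m-1})$, so writing $\omega(\xi):=1-U(\xi;\gamma)\in(0,1)$ for $\xi$ near $-\infty$, we get $\omega'(\xi)=\phi_{c(\gamma)}(U)^\alpha/(mU^{m-1})$, and near $\xi=-\infty$ (where $U\sim1$) this reads
\begin{equation*}
\omega'(\xi)= \frac{C_{p,\gamma}^\alpha}{m}\,\omega(\xi)^{\alpha\mu_{p,\gamma}}(1+o(1)).
\end{equation*}
When $p\ge2$ we have $\alpha\mu_{p,\gamma}=\tfrac{1}{p-1}\cdot(p-1)=1$, so $\omega'\approx (C_{p,\gamma}^\alpha/m)\,\omega$, a linear ODE whose solutions decay exponentially: $\omega(\xi)\le M\mathrm{e}^{-\lambda|\xi|}$ for $\lambda$ slightly smaller than $C_{p,\gamma}^\alpha/m$ and suitable $M>0$, valid on $(-\infty,\xi_*]$ for $\xi_*$ sufficiently negative. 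When $p\in(1,2)$ and $\gamma>0$ we have $\mu_{p,\gamma}=1$ and $\alpha=\tfrac{1}{p-1}>1$, so $\alpha\mu_{p,\gamma}=\tfrac{1}{p-1}$; the ODE $\omega'\approx (C_{p,\gamma}^\alpha/m)\,\omega^{1/(p-1)}$ with exponent $1/(p-1)>1$ has solutions blowing down like a negative power of $|\xi|$: separating variables, $\omega(\xi)^{1-1/(p-1)}$ is comparable to $|\xi|$ (with the constant involving $C_{p,\gamma}^\alpha/m$ and hence, through $C_{p,\gamma}=|h'(1)|/\gamma$, a factor $\gamma^{-\alpha}=\gamma^{-1/(p-1)}$), which after taking the appropriate root yields $\omega(\xi)\le M\gamma^{(p-1)/(2-p)}|\xi|^{-(p-1)/(2-p)}$; one checks $\tfrac{1}{1/(p-1)-1}=\tfrac{p-1}{2-p}$, matching the stated exponent. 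To make these heuristics rigorous I would fix $\varepsilon>0$, choose $\xi_*$ so negative that the $(1+o(1))$ factor lies in $(1-\varepsilon,1+\varepsilon)$ for all $\xi\le\xi_*$, and then sandwich $\omega$ between the solutions of the two autonomous comparison ODEs with constants $(1\mp\varepsilon)C_{p,\gamma}^\alpha/m$, using monotonicity of the flow; the lower bound $U<1$ is immediate since $\omega>0$ throughout $\mathbb{R}$.

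The main obstacle I anticipate is bookkeeping rather than conceptual: one must carefully track how the constant $C_{p,\gamma}$ from Lemma~\ref{lemma:behaviour_near_1} — which depends on $\gamma$ as $|h'(1)|/\gamma$ in the range $p\in(1,2)$ — propagates through the integration of the ODE so that the final bound displays exactly the power $\gamma^{(p-1)/(2-p)}$, uniformly for $\gamma$ in a fixed interval $(0,\gamma_*)$; in particular one wants $M$ independent of $\gamma$. A secondary point needing care is that the asymptotic relation in Lemma~\ref{lemma:behaviour_near_1} is stated for $U\sim1$, i.e. it only controls $\phi_{c(\gamma)}$ in a neighbourhood of $U=1$ whose size may a priori depend on $\gamma$; one should either verify from the construction that this neighbourhood can be taken uniform in $\gamma\in[0,\gamma_*)$, or restrict $\xi_*$ (and accordingly $M$) in a $\gamma$-dependent way that is nonetheless compatible with the claimed uniform form of the estimate. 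Once these dependencies are pinned down, both cases follow from elementary comparison for first-order autonomous ODEs.
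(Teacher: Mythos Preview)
Your approach is correct and essentially the same as the paper's: both use the asymptotics of $\phi_{c(\gamma)}$ near $U=1$ from Lemma~\ref{lemma:behaviour_near_1} and integrate back to the $\xi$ variable, the paper via the quadrature formula~\eqref{eq:support_tw} and you via the equivalent autonomous ODE for $\omega=1-U$. Your concern about uniformity of $M$ in $\gamma$ is unnecessary, since the lemma fixes $\gamma$ first and so $M$ and $\xi_*$ are allowed to depend on it.
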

\begin{proof}
(a) The limit follows easily from~\eqref{eq:behaviour.U.1}, or \eqref{eq:gamma.0} when $p\in(1,2)$ and $\gamma=0$, together with
the relation $V'= -\big(\phi/U\big)^\alpha$.

\noindent (b) We do the computation for the case $p\ge2$, the other one being similar.   Let $\xi_*\in\mathbb{R}_-$ be such that $U(\xi_*)<1$, $U(\xi_*)\sim 1$, and let $\xi<\xi_*$. Behaviour~\eqref{eq:behaviour.U.1} implies that
there are constants $c,C>0$ such that
$$
c\le \frac{\int_{U(\xi_*)}^{U(\xi)} \frac{s^{m-1}}{(\phi (s))^\alpha}\,\textrm{d}s}
 {\int_{U(\xi_*)}^{U(\xi)}(1-s)^{-1}\,\textrm{d}s}\le C.
$$
Since $\int_{U(\xi_*)}^{U(\xi)}(1-s)^{-1}\,\textrm{d}s= \log\left(\frac{1-U(\xi_*)}{1-U(\xi)}\right)$,
formula~\eqref{eq:support_tw} with $\xi_1=\xi_*$ and $\xi_0=\xi$ implies
$$
(1-U(\xi_*)){\rm e}^{\xi/ C}{\rm e}^{-\xi_*/C}\ge 1-U(\xi)\ge (1-U(\xi_*)){\rm e}^{\xi/ c}{\rm e}^{-\xi_*/c}>0,
$$
from where the result follows with $\lambda=1/C$ and $M=(1-U(\xi_*)){\rm e}^{-\xi_*/C}$.
\end{proof}
\noindent\emph{Remark. } When $p\in(1,2)$ and $\gamma=0$ we have behaviour~\eqref{eq:gamma.0}, which translates to
$$
1>U(\xi)\ge  1-C|\xi|^{-\frac{p}{2-p}}\quad\text{for }\xi\in(-\infty,\xi_*].
$$
As in the case with convection, we  have a power-like approach to 1, though now it is  faster. In any case, both with and without convection, the singular nature of the diffusion for $p\in(1,2)$  dominates the exponential behaviour given by the linear slope of the reaction $h$ at $u=1$. An easy  comparison argument with travelling waves shows, in this range of values of $p$, that  spreading solutions to problem~\eqref{eq:main} with a compactly supported initial datum strictly below 1 approach that value at most in a power-like manner.

\medskip

We end this subsection with a result that will help us to build certain subsolutions which will serve as comparison functions in Section~\ref{Over the Fujita exponent}.

Let us define the quantity
$$
\beta_c  = \left\{ \begin{array}{lcc}
0 \text{ if there exists no curve in } S\text{ through } (0,0), \\
U^{(c)} \text{ if the extremal trajectory } T_c\text{ in } S \text{ through }(0,0)\text{ exists.}
\end{array}
\right.
$$
\begin{Lemma}\label{lema_pp_5}
Let $\gamma\in[0,\gamma_*)$. If $c \in (0,c(\gamma))$, then $\beta_c \in [0,1)$, and for every $\eta \in (\beta_c, 1)$ the only trajectory through $(\eta,0)$ enters $S$ through that point and leaves it through a point $(0,\nu)$, $\nu>0$, in the positive $\phi$-axis.
\end{Lemma}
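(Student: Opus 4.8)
The plan is to follow the structure of the analogous result in~\cite{Aronson-Weinberger-1978}, but paying attention to the extra convection term and the degenerate exponent $\alpha\in(0,m)$. First I would establish that $\beta_c\in[0,1)$. By the definition of the critical speed $c(\gamma)=\inf\{c>0:U^{(c)}=1,\ \phi_c(1)>0\}$ and the monotonicity property of Lemma~\ref{lema_pp_4} (namely $c<d$ and $T_c\neq\emptyset$ imply $\phi_d<\phi_c$ on $(0,U^{(c)}]$), for $c<c(\gamma)$ the maximal trajectory $\phi_c$ cannot reach $U=1$ while staying positive; hence either $T_c=\emptyset$ (so $\beta_c=0$) or $U^{(c)}<1$ (so $\beta_c=U^{(c)}\in(0,1)$). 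In either case $\beta_c\in[0,1)$, and by construction $\phi_c(\beta_c)=0$ when $\beta_c>0$, while $\phi_c>0$ on $(0,\beta_c)$.

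Next, fix $\eta\in(\beta_c,1)$ and consider the unique trajectory $\phi$ of~\eqref{eq:trajectories} with $\phi(\eta)=0$. I would first show it enters $S$ as $U$ decreases from $\eta$: near $U=\eta$ with $\phi\to0^+$, the dominant term in $\mathrm{d}\phi/\mathrm{d}U=c+\gamma mU^{m-1}\phi^{1-\alpha}-f(U)/\phi^\alpha$ is $-f(U)/\phi^\alpha$, and since $\eta>\beta_c\ge a$ (where $a$ is the parameter in~\eqref{eq:reaction}) one has $f(\eta)=m\eta^{m-1}h(\eta)>0$, so $\mathrm{d}\phi/\mathrm{d}U\to-\infty$; thus $\phi$ increases into $S$ as $U$ decreases below $\eta$. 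The core of the argument is then to show the trajectory cannot return to the $U$-axis before reaching $U=0$, i.e. $\phi>0$ on $(0,\eta)$. Suppose instead $\phi(U_*)=0$ for some $U_*\in(0,\eta)$ with $\phi>0$ on $(U_*,\eta)$. On $(a,\eta)$ the reaction satisfies $h>0$ so $f>0$; comparing $\phi$ with the maximal trajectory $\phi_c$ (which satisfies $\phi_c=0$ precisely on $[\,0,?\,]\cup$ appropriate pieces and is maximal among all trajectories through points of $S$), a comparison-lemma argument shows $\phi$ stays below $\phi_c$ where both are positive. But if $\phi$ hit zero at some $U_*>\beta_c$ this would force, by the comparison lemma applied in reverse (using that at a contact-with-axis point the slope is $-\infty$), that $\phi_c$ also vanishes somewhere in $(\beta_c,\eta]$, contradicting the definition of $\beta_c=U^{(c)}$ as the \emph{supremum} of the positivity interval of $\phi_c$. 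Hence $\phi>0$ all the way down to $U=0$, and since~\eqref{eq:bound.above.phi_c} (valid for $c\le c_0$, $\gamma\le\gamma_0$, see the remark after Lemma~\ref{lema_pp_1}) bounds $\phi$ from above, $\phi(0)=:\nu$ is finite; it is strictly positive because $\phi$ was positive and the trajectory depends continuously on data, or more directly because $\phi$ is monotone increasing in a neighbourhood of $U=0$ along this orbit (the sign of $\mathrm{d}\phi/\mathrm{d}U$ near $U=0$ being governed by $c+\gamma mU^{m-1}\phi^{1-\alpha}-mh(U)/(U\,\phi^\alpha/U)^\alpha\cdot\ldots$, which stays bounded). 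This shows the trajectory exits $S$ through $(0,\nu)$ with $\nu>0$.

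The main obstacle, as in~\cite{Aronson-Weinberger-1978}, is the comparison step controlling the behaviour of the trajectory on the interval where $h$ changes sign, i.e. on $(0,a)$ (or $(0,\beta_c)$): there $f(U)\le0$ in the bistable/combustion cases, so the term $-f(U)/\phi^\alpha$ becomes nonnegative and can push $\phi$ \emph{up}, which actually \emph{helps} keep $\phi$ positive; the delicate region is rather $(\beta_c,\eta)$ where $f>0$ and $\phi$ could in principle be dragged back to the axis. Ruling this out requires the precise maximality of $\phi_c$ together with the comparison lemma quoted before Lemma~\ref{lema_pp_1}, applied with $\phi_1=\phi_c$ and $\phi_2=\phi$ and exploiting that $F(U,\phi)$ is strictly decreasing in $U$ through the factor $U^{m-\alpha}$ (since $m>\alpha$). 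The remaining verifications — the $-\infty$ slope at the entry point $(\eta,0)$, the finiteness of $\nu$ from~\eqref{eq:bound.above.phi_c}, and the strict positivity of $\nu$ — are routine, and the argument is structurally identical to~\cite[Proposition~4.3 and its corollaries]{Aronson-Weinberger-1978} with $(q,p)\leftrightarrow(U,\phi)$, the only genuinely new feature being the harmless convection term $\gamma mU^{m-1}\phi^{1-\alpha}$, which is nonnegative and uniformly bounded for the parameter ranges under consideration and hence does not affect any of the one-sided comparisons.
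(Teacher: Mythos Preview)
Your overall strategy---mirroring Aronson--Weinberger's Lemma~4.3---matches what the paper intends (it simply refers to that proof), and your first step showing $\beta_c<1$ via Lemma~\ref{lema_pp_4} is fine. However, the core argument (why the trajectory $\phi$ through $(\eta,0)$ cannot return to the $U$-axis before $U=0$) has the roles of the two regions reversed and the comparison inequality backwards.

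On $(a,\eta)$, where $f>0$, the obstruction is the \emph{direct} slope argument you allude to: if $\phi(U_*)=0$ with $\phi>0$ on $(U_*,\eta)$ and $f(U_*)>0$, then $\mathrm{d}\phi/\mathrm{d}U\to-\infty$ as $\phi\to0^+$, contradicting the mean-value fact that $\phi'>0$ somewhere arbitrarily close to $U_*^+$. No comparison with $\phi_c$ is needed---or even available---here, since $\phi_c\equiv0$ on $[\beta_c,1]$ by definition; your proposed contradiction (``$\phi_c$ also vanishes somewhere in $(\beta_c,\eta]$'') is vacuous. The genuinely delicate region is $(0,a]\subset(0,\beta_c)$ (you assert $\beta_c\ge a$ without proof; it follows because on $(0,a]$ one has $f\le0$, hence $\mathrm{d}\phi_c/\mathrm{d}U\ge c>0$, so $\phi_c(U)\ge cU>0$ there, giving $\beta_c>a$). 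On $(0,a]$ the inequality $f\le0$ forces $\mathrm{d}\phi/\mathrm{d}U>0$, so as $U$ \emph{decreases} the trajectory $\phi$ \emph{decreases}; your claim that the term $-f/\phi^\alpha\ge0$ ``pushes $\phi$ up'' and ``helps keep $\phi$ positive'' has the direction reversed. What rescues $\phi$ on $(0,\beta_c)$ is that $\phi(\beta_c)>0=\phi_c(\beta_c)$, so by uniqueness of trajectories in $S$ one gets $\phi>\phi_c>0$ on $(0,\beta_c)$---not $\phi<\phi_c$ as you wrote. Finally, $\nu:=\phi(0^+)$ exists because $\phi$ is monotone on $(0,a)$, and $\nu>0$ follows from the maximality of $\phi_c$: if $\nu=0$ then $\phi$ would be a trajectory through $(0,0)$ with points in $S$ above $T_c$, a contradiction.
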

The proof is analogous to that for the semilinear case given in~\cite[Lemma 4.3]{Aronson-Weinberger-1978}.

It is not difficult to check that the behaviour close to $(\eta,0)$ of the trajectory provided by Lemma~\ref{lema_pp_5} is given by $\phi(U)\sim C (\eta- U)^{1/(\alpha+1)}$. Therefore, the corresponding profile $U^{c,\eta}(\xi)$ reaches the height $\eta$, with 0 flux, at a finite value $\xi_0$; see relation~\eqref{eq:support_tw}. On the other hand, the same relation shows that $U^{c,\eta}(\xi)$ first touches 0, with a negative flux $-\nu$, at a finite value $\xi_1$. Let $b=\xi_1-\xi_0$. After a translation, we may set $\xi_0=0$, $\xi_1=b$. We summarize all this information in the following lemma.
\begin{Lemma}
\label{lem:profile.subsolution}
Let $\gamma\in[0,\gamma_*)$, $c \in (0,c(\gamma))$, and $\eta \in (\beta_c, 1)$. There is a unique monotone decreasing function $U^{c,\eta}$ such that,
for some $b,\nu>0$,
\begin{equation}\label{eq:profile.subsolution}
\begin{cases}
c(U^{c,\eta})'- \big(|((U^{c,\eta})^m)'|^{p-1}\big)'-\gamma|((U^{\eta,c})^m)'|^{p-1}+ h(U^{\eta,c})=0\quad&\text{in }(0,b),\\
U^{c,\eta}(0)=\eta,\quad (U^{c,\eta})^m)'(0)=0, \quad U^{c,\eta}(b)=0,\quad (U^{c,\eta})^m)'(0)=-\nu.
\end{cases}
  \end{equation}
\end{Lemma}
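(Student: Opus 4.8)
To prove Lemma~\ref{lem:profile.subsolution} I would simply extract the profile from the phase-plane trajectory supplied by Lemma~\ref{lema_pp_5} and read off its properties from the integral formula~\eqref{eq:support_tw}. By Lemma~\ref{lema_pp_5}, since $c\in(0,c(\gamma))$ and $\eta\in(\beta_c,1)$ there is a unique orbit $\phi=\phi(U)$ of~\eqref{eq:trajectories} entering the region $S$ at $(\eta,0)$ and leaving it at some point $(0,\nu)$ with $\nu>0$; thus $\phi(U)>0$ for $U\in(0,\eta)$, $\phi(\eta)=0$ and $\phi(0)=\nu$. Note that $\eta\in(\beta_c,1)$ forces $h(\eta)>0$, hence $f(\eta)=m\eta^{m-1}h(\eta)>0$, a fact that will be needed below.

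The only step that requires care is the local behaviour of $\phi$ at $(\eta,0)$, which is \emph{not} an equilibrium of~\eqref{eq:linearizable_system} (there $\dot U=0$ but $\dot\phi=f(\eta)\neq0$). Regarding $U$ as a function of $\phi$ in system~\eqref{eq:linearizable_system},
\[
\frac{\mathrm dU}{\mathrm d\phi}=\frac{-\phi^\alpha}{f(U)-\gamma m\phi U^{m-1}-c\phi^\alpha}=-\frac{\phi^\alpha}{f(\eta)}\,(1+o(1))\qquad\text{as }\phi\to0^+ ,
\]
and integrating gives $\eta-U=\phi^{\alpha+1}\big((\alpha+1)f(\eta)\big)^{-1}(1+o(1))$, i.e.
\[
\phi(U)=\big((\alpha+1)f(\eta)\big)^{1/(\alpha+1)}(\eta-U)^{1/(\alpha+1)}(1+o(1))\qquad\text{as }U\to\eta^- .
\]
Justifying the $(1+o(1))$ rigorously calls for a standard invariant-region argument near $(\eta,0)$ controlling the lower-order terms $\gamma m\phi U^{m-1}$ and $c\phi^\alpha$ against $f(\eta)$; this is the one genuinely non-mechanical point, and is what the phrase "it is not difficult to check'' preceding the lemma refers to. At the other end one simply has $\phi(0)=\nu>0$.

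Plugging these asymptotics into~\eqref{eq:support_tw}: since $\alpha/(\alpha+1)<1$, the integral $m\int^{\eta}U^{m-1}\phi(U)^{-\alpha}\,\mathrm dU$ converges at the upper endpoint, and since $m>0$ and $\phi$ is bounded away from $0$ near $U=0$ it also converges at the lower endpoint. Equivalently, integrating $U'=-\phi(U)^\alpha/(mU^{m-1})$, the resulting profile attains the height $\eta$ at a finite abscissa $\xi_0$ (with zero flux, because $\phi(\eta)=0$) and attains the height $0$ at a finite abscissa $\xi_1>\xi_0$ (with flux $-\nu$, because $\phi(0)=\nu$); set $b:=\xi_1-\xi_0$ and translate so that $\xi_0=0$, $\xi_1=b$, calling the translated profile $U^{c,\eta}$. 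Finally one records the properties: monotonicity is immediate from $(U^{c,\eta})'=-\phi^\alpha/\big(m(U^{c,\eta})^{m-1}\big)<0$ on $(0,b)$; the identity in~\eqref{eq:profile.subsolution} is the travelling-wave equation for~\eqref{eq:convection} rewritten in the variable $\xi$ through $\phi=|((U^{c,\eta})^m)'|^{p-1}$ together with~\eqref{eq:original_system}, the boundary conditions following from $\phi(\eta)=0$ and $\phi(0)=\nu$ (the flux condition being read at $\xi=b$); and uniqueness of $U^{c,\eta}$, once the supporting interval is normalised to $[0,b]$, is inherited from the uniqueness of the orbit in Lemma~\ref{lema_pp_5}.
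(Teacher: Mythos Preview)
Your proposal is correct and follows essentially the same route as the paper, which presents the argument in the paragraph immediately preceding the lemma rather than as a formal proof: extract the orbit from Lemma~\ref{lema_pp_5}, identify the asymptotic $\phi(U)\sim C(\eta-U)^{1/(\alpha+1)}$ near $(\eta,0)$, and use~\eqref{eq:support_tw} to conclude that both endpoints are reached at finite values of~$\xi$. You supply more detail than the paper (the explicit constant, the convergence check at both endpoints, and the remarks on monotonicity and uniqueness), but the underlying idea is identical.
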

The profiles $U^{c,\eta}$,  extended adequately, will be used later to construct subsolutions to~\eqref{eq:main}.

\subsection{Dependence on the convection parameter}\label{subsect-dependence.convection}

The purpose of this subsection is to study the dependence on $\gamma$ of both $c(\gamma)$ and $U(\cdot;\gamma)$ in the interval $[0,\gamma_*)$, where we know that they are well defined. It is convenient to work with the pressure , which is already known to satisfy $V(\cdot;\gamma)\in C(\mathbb{R})\cap C^\infty(\overline{\mathbb{R}_-})$, $V(-\infty;\gamma)=m/(m-\alpha)$, and
$$
-c(\gamma)V' - (m-\alpha)V \Delta_p V - |V'|^p + \gamma (m-\alpha) |V'|^{p-1} V=m\Big(\frac{m-\alpha}{m}V\Big)^{1-\frac{1}{m-\alpha}}h\Big( \Big(\frac{m-\alpha}{m}V\Big)^{\frac{1}{m-\alpha}} \Big)
$$
in $\mathbb{R}_-$. We have omitted the arguments of the pressure  to simplify the writing.
If we differentiate this equation with respect to $\xi$, we get
$$
\begin{array}{l}
\quad-c(\gamma)V'' + ((m-\alpha)(p-1) + p)|V'|^{p-1}V'' - (m-\alpha)(p-1)V|V'|^{p-3}\big(|V'| V''' - (p-2)(V'')^2 \big)\\[8pt]
\quad-\gamma(m-\alpha)((p-1)V|V'|^{p-2}V'' + |V'|^p) \\[8pt]
= (m-\alpha-1)\big( \frac{m-\alpha}{m}V \big)^{-\frac{1}{m-\alpha}}V'h\left(\big( \frac{m-\alpha}{m}V \big)^{\frac{1}{m-\alpha}}\right)
+ V'h'\left(\big( \frac{m-\alpha}{m}V \big)^{\frac{1}{m-\alpha}}\right).
\end{array}
$$
The only qualitative difference between the general case that we are studying here and the porous medium case, where $\alpha=1$, resides in the powers in the reaction term. This difference adds few difficulties to reproduce the study in~\cite{Du-Quiros-Zhou-Preprint} that allows  to conclude that $c(\gamma)$ belongs to $C^2([0,\gamma_0))$ for some $\gamma_0>0$.  Hence we will omit most of the computations and show them only when they differ significantly from the ones in~\cite{Du-Quiros-Zhou-Preprint}.

 The next lemma states that the function $c(\gamma)$ is locally Lipschitz in  $[0,\gamma_*)$.
\begin{Lemma}\label{lema_c_lipschitz}
Given  any $\gamma_0\in(0,\gamma_*)$,  there is a positive constant $k$ depending on $\gamma_0$ such that
$$
0\leq c(\gamma_1) - c(\gamma_2)\leq k(\gamma_2 - \gamma_1)\quad\text{if } 0\le \gamma_1\le \gamma_2\le \gamma_0.
$$
\end{Lemma}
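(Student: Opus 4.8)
The plan is to argue entirely at the level of the phase‑plane trajectories of equation~\eqref{eq:trajectories}, following the strategy of~\cite{Du-Quiros-Zhou-Preprint}. Write $\phi_{c,\gamma}$ for the maximal trajectory through $(0,0)$ at speed $c$ and convection $\gamma$ (the function $\phi_c$ of Subsection~\ref{subsect-wavefronts.convection}, now with the $\gamma$‑dependence displayed) and $F_{c,\gamma}(U,\phi):=c+\gamma m U^{m-1}\phi^{1-\alpha}-f(U)\phi^{-\alpha}$ for the right‑hand side of~\eqref{eq:trajectories}, so $\phi_{c,\gamma}'=F_{c,\gamma}(U,\phi_{c,\gamma})$. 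The structural fact I will use, coming from Theorem~\ref{thm:existe.tw.convection} and Lemma~\ref{lema_pp_4}, is that $\phi_{c,\gamma}$ reaches $U=1$ (equivalently $U^{(c)}=1$, i.e.\ $\phi_{c,\gamma}>0$ on $(0,1)$) precisely when $c\ge c(\gamma)$. Since $\gamma\mapsto F_{c,\gamma}(U,\phi)$ is strictly increasing on $\{U>0,\,\phi>0\}$, the comparison lemma stated before Lemma~\ref{lema_pp_1}, applied to the regularised trajectories $\phi_{c,\gamma}(\cdot;\nu)$ and then letting $\nu\to0$, shows that $\phi_{c,\gamma}$ is pointwise nondecreasing in $\gamma$; hence $c(\gamma)$ is nonincreasing, which gives the left inequality $0\le c(\gamma_1)-c(\gamma_2)$ and also the uniform bounds $c(\gamma_0)\le c(\gamma)\le c(0)$ for $\gamma\in[0,\gamma_0]$ (recall $c(\gamma_0)>0$ by Lemma~\ref{lema_pp_3}).

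For the quantitative bound, I would fix $0\le\gamma_1<\gamma_2\le\gamma_0$, set $c:=c(\gamma_2)+k(\gamma_2-\gamma_1)$ with $k>0$ to be chosen depending only on $\gamma_0$ (and $m,p,h$), and show that $\phi_{c,\gamma_1}$ reaches $U=1$; by the structural fact this forces $c\ge c(\gamma_1)$, i.e.\ $c(\gamma_1)-c(\gamma_2)\le k(\gamma_2-\gamma_1)$. To do so I compare $\phi_{c,\gamma_1}$ with the connecting trajectory $\phi_{c(\gamma_2),\gamma_2}$, which is positive on all of $(0,1)$. Evaluating both right‑hand sides along $\phi_{c(\gamma_2),\gamma_2}$ gives
$$
F_{c,\gamma_1}\big(U,\phi_{c(\gamma_2),\gamma_2}(U)\big)-F_{c(\gamma_2),\gamma_2}\big(U,\phi_{c(\gamma_2),\gamma_2}(U)\big)=(\gamma_2-\gamma_1)\big(k-m U^{m-1}\phi_{c(\gamma_2),\gamma_2}(U)^{1-\alpha}\big),
$$
so if $k>m U^{m-1}\phi_{c(\gamma_2),\gamma_2}(U)^{1-\alpha}$ on the relevant $U$‑range, and since $\phi_{c,\gamma_1}>\phi_{c(\gamma_2),\gamma_2}$ for $U$ small (by the computation behind~\eqref{eq:behaviour.origin}, $\phi_{c,\gamma_1}(U)\sim cU>c(\gamma_2)U\sim\phi_{c(\gamma_2),\gamma_2}(U)$ because $c>c(\gamma_2)$), the comparison lemma yields $\phi_{c,\gamma_1}\ge\phi_{c(\gamma_2),\gamma_2}>0$ there, whence $\phi_{c,\gamma_1}$ reaches $U=1$.

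Everything then reduces to bounding $m\sup_U U^{m-1}\phi_{c(\gamma_2),\gamma_2}(U)^{1-\alpha}$ uniformly for $\gamma_2\in[0,\gamma_0]$. Near $U=0$ this is under control for every $p$: by~\eqref{eq:behaviour.origin}, $U^{m-1}\phi_{c(\gamma_2),\gamma_2}(U)^{1-\alpha}\sim c(\gamma_2)^{1-\alpha}U^{m-\alpha}\to0$, which is exactly where the slow diffusion assumption $m>\alpha$ enters, and $c(\gamma_2)\in[c(\gamma_0),c(0)]$ makes it uniform. When $p\ge2$ one has $\alpha\le1$, so $\phi^{1-\alpha}$ is a nonnegative power of $\phi$, bounded by the universal barrier of the Remark after Lemma~\ref{lema_pp_1} uniformly in $\gamma_2$; hence the supremum is finite and uniform, any $k$ above it works, and the proof is complete on all of $(0,1)$. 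In particular for $p=2$ ($\alpha=1$) the convection term of~\eqref{eq:trajectories} does not involve $\phi$ at all and one simply needs $k>m\sup_{[0,1]}U^{m-1}$, recovering the porous‑medium argument of~\cite{Du-Quiros-Zhou-Preprint}.

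The main obstacle is the range $1<p<2$, where $\alpha>1$ and $\phi^{1-\alpha}=\phi^{-(\alpha-1)}$ blows up as $\phi\to0$. Near $U=0$ this is still tamed by $m>\alpha$, but near $U=1$ it is not, so the comparison above can only be run on $[0,1-\delta]$, and the gap $[1-\delta,1]$ must be closed directly from the endpoint asymptotics of Lemma~\ref{lemma:behaviour_near_1}: there $\phi_{c,\gamma_1}(U)\sim(|h'(1)|/\gamma_1)(1-U)$ and $\phi_{c(\gamma_2),\gamma_2}(U)\sim(|h'(1)|/\gamma_2)(1-U)$, and since $\gamma_1<\gamma_2$ the former dominates; if $\gamma_1=0$ the profile $\phi_{c,0}$ behaves like $(1-U)^{2(p-1)/p}$ with the smaller exponent $2(p-1)/p<1$ and dominates all the more. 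The delicate point — and where the real work lies for $1<p<2$ — is to make the width $\delta$, hence $k$, uniform as $\gamma_1\downarrow0$, where the profile near $U=1$ degenerates from the linear one to the $\gamma=0$ one; this requires uniform control of the remainders in the expansions of Lemma~\ref{lemma:behaviour_near_1}. For $p\ge2$, which is what the main results of the paper require, this difficulty does not arise.
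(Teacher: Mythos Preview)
Your proposal is correct and follows essentially the same route as the paper: both argue via comparison of phase-plane trajectories of~\eqref{eq:trajectories}, exactly in the spirit of~\cite[Lemma 2.3]{Du-Quiros-Zhou-Preprint} and~\cite[Proposition 3.1]{Malaguti-Ruggerini-2010}, and the decisive uniform bound you need on $mU^{m-1}\phi^{1-\alpha}$ is precisely what the paper's proof sketch invokes through the uniform bound on the flux (Remark after Lemma~\ref{lema_pp_1}) when $p\ge2$. Your explicit separation of the cases $p\ge2$ versus $1<p<2$, and your honest acknowledgement that the latter requires additional work near $U=1$ (work the paper does not spell out either, and which is not needed for the main results), is an accurate reflection of the situation.
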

The proof is similar to the one for the case $p=2$ in~\cite[Lemma 2.3]{Du-Quiros-Zhou-Preprint}, which is an adaptation of~\cite[Proposition 3.1]{Malaguti-Ruggerini-2010},  recalling that  we can find uniform bounds for the gradient of the pressure and the flux  of the travelling wave whenever $\gamma$ is bounded; see the remarks after Corollary~\ref{lema_bounded_derivative_pressure} and  Lemma~\ref{lema_pp_1}.

To proceed we consider the derivative of $V(\cdot; \gamma)$ at level $q$, denoted by
$$
\mathcal{P}(q;\gamma):=V'(V^{-1}(q;\gamma); \gamma), \quad q\in\left(0, \frac{m}{m-\alpha} \right),
$$
which satisfies
$$
\big(|\mathcal{P}|^{p-1}\big)'=\frac{c(\gamma) - |\mathcal{P}|^{p-1}}{(m-\alpha)q} +\gamma |\mathcal{P}|^{p-2}-  \frac{1}{|\mathcal{P}|}\left(\frac{m-\alpha}{m}q\right)^{-\frac{1}{m-\alpha}} h\left(\left(\frac{m-\alpha}{m}q\right)^{\frac{1}{m-\alpha}}\right).
$$
 The lemma below follows easily from lemmas~\ref{lemma:behaviour_near_1}--\ref{lema_limite_segunda_derivada}.

\begin{Lemma}\label{lemma:limits_derivative}
Assume~\eqref{eq:slow.regime} and $\gamma\in [0,\gamma^*)$.

\noindent{\rm (a)} For $q\sim 0$,
$$
\mathcal{P}(q;\gamma) = - (c(\gamma))^\alpha + z_0q(1+o(1))\quad \text{where }z_0:= \frac{(m-\alpha)\left(h'(0) - \gamma c(\gamma)\right)}{(m-\alpha+1)(p-1)c(\gamma)}.
$$

\noindent{\rm (b)} For $q\sim \frac{m}{m-\alpha}$,
$$
\begin{array}{ll}
\displaystyle\mathcal{P}(q;\gamma) =   -C_{p,\gamma}^\alpha\Big( \frac{m}{m-\alpha}  -q\Big)   (1+o(1)),\ & p\ge2,\\[10pt]
\displaystyle\mathcal{P}(q;\gamma) =   -\left(\frac{|h'(1)|}{m\gamma}\Big( \frac{m}{m-\alpha}  -q\Big)\right)^{\alpha}   (1+o(1)),\ & p\in(1,2),\ \gamma>0.
\end{array}
$$
\end{Lemma}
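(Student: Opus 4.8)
The plan is to prove both parts by transporting asymptotic information that is already available for the pressure profile $V(\cdot;\gamma)$, via the change of variable $\xi\mapsto q=V(\xi;\gamma)$. Since the wavefront is strictly monotone on its support, $V(\cdot;\gamma)$ is a strictly decreasing $C^\infty$ function mapping $\mathbb R_-$ onto $\big(0,\tfrac{m}{m-\alpha}\big)$ (strict decrease because $V'=-(\phi_{c(\gamma)}/U)^\alpha<0$ there); hence $V^{-1}(\cdot;\gamma)$ is smooth on that open interval and $\mathcal P(q;\gamma)=V'(V^{-1}(q;\gamma);\gamma)$ inherits the regularity. The two endpoints are $q=0$ (the free boundary, $\xi\to0^-$), where Lemma~\ref{lema_limite_segunda_derivada} enters, and $q=\tfrac{m}{m-\alpha}$ ($\xi\to-\infty$), where Lemmas~\ref{lemma:behaviour_near_1} and~\ref{lem:behaviour.pressure.infinity} enter.

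For part (a) I would first observe that $V(\cdot;\gamma)$ extends to a $C^2$ function up to $\xi=0$: it is continuous there, and $V'$, $V''$ have finite one-sided limits as $\xi\to0^-$ by Lemma~\ref{lema_limite_segunda_derivada}, so the standard derivative-extension argument turns those limits into genuine one-sided derivatives. Since $V'(0^-)=-(c(\gamma))^\alpha\neq0$, the inverse $V^{-1}$ is $C^2$ near $q=0$, and therefore so is $\mathcal P(\cdot;\gamma)$, with $\mathcal P(0;\gamma)=V'(0^-)=-(c(\gamma))^\alpha$ and, by the chain rule $\tfrac{d}{dq}V^{-1}=1/V'$,
\[
\partial_q\mathcal P(0;\gamma)=\frac{V''(0^-)}{V'(0^-)}.
\]
Plugging in the two limits from Lemma~\ref{lema_limite_segunda_derivada} and simplifying with $(c(\gamma))^{1-\alpha}(c(\gamma))^\alpha=c(\gamma)$, this ratio equals exactly $z_0$. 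A first-order Taylor expansion at $q=0$ (equivalently, integrating $\partial_q\mathcal P$, whose value tends to $z_0$ as $q\to0^+$) then gives $\mathcal P(q;\gamma)=-(c(\gamma))^\alpha+z_0q+o(q)$, which is (a).

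For part (b) the key identity is $V'=-(\phi_{c(\gamma)}(U)/U)^{\alpha}$, a direct consequence of system~\eqref{eq:original_system} and the definition of $V$. Feeding in the behaviour of $\phi_{c(\gamma)}$ near $U=1$ from Lemma~\ref{lemma:behaviour_near_1}, namely $\phi_{c(\gamma)}(U)=C_{p,\gamma}(1-U)^{\mu_{p,\gamma}}(1+o(1))$, and using $U\to1$, one gets $\mathcal P=V'=-C_{p,\gamma}^{\alpha}(1-U)^{\alpha\mu_{p,\gamma}}(1+o(1))$, with $\alpha\mu_{p,\gamma}=1$ when $p\ge2$ and $\alpha\mu_{p,\gamma}=\alpha$ when $p\in(1,2)$, $\gamma>0$. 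It then remains to rewrite $1-U$ in the pressure variable: from $q=\tfrac{m}{m-\alpha}U^{m-\alpha}$ and a Taylor expansion of $U\mapsto U^{m-\alpha}$ at $U=1$ one has $\tfrac{m}{m-\alpha}-q=m(1-U)(1+o(1))$, hence $1-U=\tfrac1m\big(\tfrac{m}{m-\alpha}-q\big)(1+o(1))$. Substituting and reorganising the constant yields the two displayed asymptotics as $q\to\big(\tfrac{m}{m-\alpha}\big)^-$. (Alternatively, one can insert the ansätze $\mathcal P\sim -As$ for $p\ge2$, resp. $\mathcal P\sim -As^{\alpha}$ for $p\in(1,2)$, $\gamma>0$, with $s=\tfrac{m}{m-\alpha}-q$, directly into the differential equation for $\mathcal P$ stated just before the lemma, use $h(U)=|h'(1)|(1-U)(1+o(1))$ near $U=1$, and match leading orders to pin down $A$; the fact that $\mathcal P(q;\gamma)\to0$ as $q\to\big(\tfrac{m}{m-\alpha}\big)^-$, by Lemma~\ref{lem:behaviour.pressure.infinity}(a), is what makes the balance consistent.)

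Everything here is elementary once Lemmas~\ref{lemma:behaviour_near_1}--\ref{lema_limite_segunda_derivada} are available; the only point that requires genuine care is the $C^2$ extension of $V$ up to the free boundary in part (a), needed so that $\partial_q\mathcal P$ makes sense at $q=0$, together with the routine verification that the $o(1)$ remainders survive the smooth, locally invertible substitutions $\xi\leftrightarrow U\leftrightarrow q$ near both endpoints. Finally, the sub-case $p\in(1,2)$, $\gamma=0$ is deliberately excluded from part (b): there $\phi_{c(0)}$ obeys the different law~\eqref{eq:gamma.0}, so the corresponding limit of $\mathcal P$ would carry the exponent $2(p-1)/p$ rather than $1$.
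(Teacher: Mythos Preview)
Your approach is correct and is exactly the one the paper intends: it merely states that the lemma ``follows easily from lemmas~\ref{lemma:behaviour_near_1}--\ref{lema_limite_segunda_derivada}'', and you have filled in precisely those details, transporting the asymptotics of $V'$ (and $V''$) through the smooth change of variables $\xi\leftrightarrow q$.

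One small caveat on the bookkeeping in part (b): carrying your substitution $1-U=\frac1m\big(\frac{m}{m-\alpha}-q\big)(1+o(1))$ through for $p\ge2$ actually produces
\[
\mathcal P(q;\gamma)=-\frac{C_{p,\gamma}^{\alpha}}{m}\Big(\frac{m}{m-\alpha}-q\Big)(1+o(1)),
\]
i.e.\ an extra factor $1/m$ compared with the display. For $p>2$ this gives $|\mathcal P|\sim\frac{|h'(1)|}{c(\gamma)}\big(\frac{m}{m-\alpha}-q\big)$, which is precisely the form the paper itself uses later in the proof of Lemma~\ref{lemma:c_differentiable}; so the discrepancy is a typo in the statement rather than an error in your derivation. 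For $p\in(1,2)$, $\gamma>0$, your computation matches the displayed constant exactly, since the $1/m$ is absorbed into the bracket.
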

Notice that if $p\in(1,2)$, then  $\mathcal{P}$ does not approach 0 linearly as $q\nearrow\frac{m}{m-\alpha}$. As a consequence, our next computations do not work in that range, and we have to assume $p\geq 2$.
\begin{Lemma}\label{lemma:c_differentiable}
Assume~\eqref{eq:slow.regime} and $p\geq 2$. The function $\gamma \to c(\gamma)$ belongs to $C^2([0,\gamma_*))$,  $\partial_\gamma\mathcal{P}\left(q;\gamma\right)$ exists and $\gamma \to \partial_\gamma\left(\mathcal{P}\left(\cdot;\gamma\right)\right)$ is continuous from $[0,\gamma_*)$ to $C([0, m/(m-\alpha)])$. Finally, $c'(\gamma)\in(-m,0)$.
\end{Lemma}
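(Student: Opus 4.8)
The plan is to follow the strategy of \cite[Lemma~2.4 and the surrounding analysis]{Du-Quiros-Zhou-Preprint}, adapting the computations to the doubly nonlinear setting, where the only genuine change is the form of the reaction term after the pressure substitution. The starting point is the ODE satisfied by $\mathcal{P}(\cdot;\gamma)$ displayed just above the statement, which we rewrite as a first-order equation for $\mathcal{P}$ on the fixed interval $[0,m/(m-\alpha)]$ with boundary data prescribed at the two endpoints by Lemma~\ref{lemma:limits_derivative}: $\mathcal{P}(0;\gamma)=-(c(\gamma))^\alpha$ and, since $p\ge 2$, $\mathcal{P}$ vanishes linearly at $q=m/(m-\alpha)$ with a definite slope $-C_{p,\gamma}^\alpha$. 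Thus for each $\gamma$ the pair $(c(\gamma),\mathcal{P}(\cdot;\gamma))$ solves a two-point boundary value problem, and the admissible value of $c$ is selected by the requirement that the solution reach $q=m/(m-\alpha)$ with $\mathcal P=0$. The key point is that $p\ge 2$ guarantees the linear (as opposed to merely H\"older) vanishing of $\mathcal{P}$ at the right endpoint, which is exactly what is needed to linearize the system there and to run the implicit function theorem.

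First I would establish differentiability of $\gamma\mapsto c(\gamma)$ and $\gamma\mapsto\mathcal{P}(\cdot;\gamma)$ by setting up the problem as $G(c,\gamma)=0$ for a suitable functional $G$ on a Banach space of profiles, using that the map is already known to be locally Lipschitz in $\gamma$ (Lemma~\ref{lema_c_lipschitz}) and continuous in $c$ (Lemma~\ref{lema_pp_4}); formally differentiating the $\mathcal{P}$-equation in $\gamma$ produces a linear ODE for $W:=\partial_\gamma\mathcal{P}$ whose coefficients are controlled thanks to the uniform bounds on $\mathcal P$ and $|\mathcal P|^{p-1}$ coming from the flux estimate (the remark after Corollary~\ref{cor:bound.flux}) and from Lemma~\ref{lema_pp_1}. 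Solvability of this linearized problem, together with the boundary conditions obtained by differentiating those in Lemma~\ref{lemma:limits_derivative}, yields the existence of $\partial_\gamma\mathcal P$ and, reading off the value at $q=0$, the existence and continuity of $c'(\gamma)$; iterating the argument once more (differentiating again, which is legitimate precisely because $p\ge2$ keeps the endpoint behaviour smooth enough) gives $c\in C^2([0,\gamma_*))$ and continuity of $\gamma\mapsto\partial_\gamma\mathcal{P}(\cdot;\gamma)$ into $C([0,m/(m-\alpha)])$.

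The final assertion $c'(\gamma)\in(-m,0)$ is the quantitative heart of the lemma and I expect it to be the main obstacle. The sign $c'(\gamma)<0$ should follow from the strict monotonicity $\phi_d(U)<\phi_c(U)$ for $c<d$ in Lemma~\ref{lema_pp_4} combined with the monotone dependence of the trajectory field on $\gamma$ (increasing $\gamma$ adds the nonnegative term $\gamma mU^{m-1}\phi^{1-\alpha}$ in \eqref{eq:trajectories}, which pushes trajectories up and hence forces a compensating decrease of $c$ to keep the connection to $(1,0)$): making this rigorous at the level of the derivative means differentiating the selection condition and checking a sign, using the structure of the linearized equation for $W$. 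For the lower bound $c'(\gamma)>-m$ the plan is to integrate the $\mathcal P$-equation, or its $\gamma$-derivative, against a well-chosen weight over $[0,m/(m-\alpha)]$ and extract the inequality from the explicit endpoint asymptotics of $\mathcal{P}$ and $W$ in Lemma~\ref{lemma:limits_derivative}(a)--(b); the factor $m-\alpha$ versus $m-\alpha+1$ appearing in $z_0$ and in Lemma~\ref{lema_limite_segunda_derivada} is what ultimately produces the constant $m$, so the computation must be done carefully but is otherwise routine. I would present the monotonicity part first and then the two-sided bound, flagging that the only place $p\ge2$ is essential is in securing the linear endpoint behaviour that makes all of this differentiation valid.
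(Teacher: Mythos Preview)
Your outline is a plausible alternative route, but it diverges from the paper's argument in a way that matters for the last assertion. The paper does not use an implicit function theorem on a Banach space of profiles; instead it works directly with the incremental quotients
\[
\hat{\mathcal{Q}}_k(q)=\frac{|\mathcal{P}(q;\gamma+k)|^{p-1}-|\mathcal{P}(q;\gamma)|^{p-1}}{k},\qquad
\hat c_k=\frac{c(\gamma+k)-c(\gamma)}{k},
\]
shows that $\hat{\mathcal{Q}}_k$ satisfies a linear first-order ODE with coefficient $a_k(q)$ that converges to an explicit $a(q)$, and then uses the integrating factor $\Psi(q;\gamma)=q^{1/(m-\alpha)}\exp\big(\int_q^1(\cdots)\,\mathrm{d}r\big)$ to pass to the limit and obtain the \emph{closed formula}
\[
c'(\gamma)=-\,(m-\alpha)\,\frac{\displaystyle\int_0^{m/(m-\alpha)}\Psi(q;\gamma)\,\mathrm{d}q}{\displaystyle\int_0^{m/(m-\alpha)} q^{-1}\Psi(q;\gamma)\,\mathrm{d}q}.
\]
The $C^2$ regularity of $c$ and the continuity of $\gamma\mapsto\partial_\gamma\mathcal{P}(\cdot;\gamma)$ then follow by inspecting this formula and the companion formula \eqref{eq:formula_derivative_pressure} for $\partial_\gamma(|\mathcal{P}|^{p-1})$, using the endpoint asymptotics of $\Psi$ derived from Lemma~\ref{lemma:limits_derivative}.

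The real payoff of this concrete route is that the bound $c'(\gamma)\in(-m,0)$, which you flag as ``the quantitative heart'' and ``the main obstacle'', is in fact \emph{immediate} from the formula: $\Psi>0$ gives $c'(\gamma)<0$, and since $q\le m/(m-\alpha)$ on the domain of integration one has $q^{-1}\ge (m-\alpha)/m$ with strict inequality a.e., hence $|c'(\gamma)|<(m-\alpha)\cdot m/(m-\alpha)=m$. Your proposed route to the lower bound---integrating the linearized equation against an unspecified weight and tracing the constant $m$ to the ratio $(m-\alpha)/(m-\alpha+1)$ in $z_0$---is based on a mistaken guess about where the $m$ comes from; it is the interval length $m/(m-\alpha)$ together with the prefactor $(m-\alpha)$ that produce it, and without the explicit formula you would be hard pressed to see this cleanly. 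Your IFT strategy is not wrong in spirit, but the endpoint degeneracies make the functional-analytic setup delicate, and you would still need essentially the same integrating-factor computation to extract the sign and size of $c'(\gamma)$.
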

\begin{proof}
We show the computations only for $p>2$. The proof for the case $p=2$ can be adapted from~\cite[Lemma 2.5]{Du-Quiros-Zhou-Preprint} to deal with more general nonlinearities than the one considered there.

We define the incremental quotients
$$
\hat{\mathcal{Q}}_k(q) := \frac{|\mathcal{P}(q; \gamma+k)|^{p-1}-|\mathcal{P}(q; \gamma)|^{p-1}}{k},\quad \hat{c}_k:=\frac{c(\gamma+k) - c(\gamma)}{k},
$$
and we see that
\begin{equation*}
\begin{aligned}
&\hat{\mathcal{Q}}'_k(q) - a_k(q)\hat{\mathcal{Q}}_k(q)= \frac{\hat{c_k}}{(m-\alpha)q},\quad\text{where}\\
\displaystyle a(q):=&- \frac{1}{(m-\alpha)q} +  \frac{(\gamma + k)\left(|\mathcal{P}(q; \gamma+k)|^{p-2}-|\mathcal{P}(q; \gamma)|^{p-2}\right) +k |\mathcal{P}(q;\gamma)|^{p-2}}{|\mathcal{P}(q; \gamma+k)|^{p-1}-|\mathcal{P}(q; \gamma)|^{p-1}}\\[8pt]
 &+ \frac{\big(|\mathcal{P}(q; \gamma+k)|-|\mathcal{P}(q; \gamma)|\big)h(s(q))}{|\mathcal{P}(q; \gamma+k)|\,|\mathcal{P}(q; \gamma)|\,\left(|\mathcal{P}(q; \gamma+k)|^{p-1}-|\mathcal{P}(q; \gamma)|^{p-1}\right)s(q)},\\
s(q):=& \left(\frac{m-\alpha}{m}q\right)^{\frac{1}{m-\alpha}}.
\end{aligned}
\end{equation*}
By the Mean Value Theorem,
$$
\begin{aligned}
 a_k(q)=& -\frac{1}{(m-\alpha)q} +  \frac{(\gamma + k)(p-2)\theta_1^{p-3} +k |\mathcal{P}(q;\gamma)|^{p-2}}{(p-1)\theta_2^{p-2}}\\
&+ \frac{h(s(q))}{|\mathcal{P}(q; \gamma+k)|\,|\mathcal{P}(q; \gamma)|(p-1)\theta_3^{p-2}s(q)}
\end{aligned}
$$
for some $\theta_1, \theta_2, \theta_3\in\left( |\mathcal{P}(q; \gamma)|, |\mathcal{P}(q; \gamma+k)|  \right)$. Therefore,  if we let $k \to 0$,
$$
a_k(q)\to a(q):= -\frac{1}{(m-\alpha)q} +  \frac{ (p-2)\gamma}{(p-1)|\mathcal{P}(q; \gamma)|}  + \frac{h(s(q))}{(p-1) |\mathcal{P}(q; \gamma)|^p  s(q)}.
$$
Now we define $A_k(q)=\int_q^1 a_k(r)\,\textrm{d}r$, obtaining that
$$
\left( \textrm{e}^{A_k(q)} \hat{\mathcal{Q}}_k(q) \right)' =  \textrm{e}^{A_k(q)} \frac{\hat{c_k}}{(m-\alpha)q},
$$
and after a bit of work, similar to the one performed in~\cite[Lemma 2.5]{Du-Quiros-Zhou-Preprint}, we get
$$
\begin{aligned}
&
c'(\gamma) =-\displaystyle\frac{ (m-\alpha)\int_0^{\frac{m}{m-\alpha}}\Psi(q;\gamma)\,\textrm{d}q}{\int_0^{\frac{m}{m-\alpha}}q^{-1}\Psi(q;\gamma)\, \rm{d}q},\quad \text{with}
\\
&\Psi(q;\gamma):= q^{\frac{1}{m-\alpha}} \exp\left( \displaystyle\int_q^1 \Big(\frac{ (p-2)\gamma}{(p-1)|\mathcal{P}(r; \gamma)|}  + \frac{h(s(r))}{(p-1) |\mathcal{P}(r;\gamma)|^p s(r)}\Big) \, \rm{d}r \right).
\end{aligned}
$$
From this expression for $c'(\gamma)$ we immediately get $c'(\gamma)\in(-m,0)$. On the other hand, Lemma~\ref{lemma:limits_derivative} implies that $|\mathcal{P}|$ is a continuous function of~$\gamma$ in the $C^1([0, m/(m-\alpha)])$ norm, and therefore so is $|\mathcal{P}|^{p}$. Hence, $c'(\gamma)$ is also a continuous function of $\gamma$.
Going on we obtain, as in~\cite{Du-Quiros-Zhou-Preprint}, that
\begin{equation}\label{eq:formula_derivative_pressure}
\begin{aligned}
\partial_\gamma\left(|\mathcal{P}(q;\gamma)|^{p-1}\right) &=\, \frac{1}{\Psi(q;\gamma)}\int_0^q \Psi(r;\gamma) \frac{c'(\gamma)}{(m-\alpha)r} \, \rm{d}r \\[8pt]
  &=\, -  \frac{1}{\Psi(q;\gamma)}\int_q^{m/(m-\alpha)}\Psi(r;\gamma) \frac{c'(\gamma)}{(m-\alpha)r} \, \rm{d}r,
\end{aligned}
\end{equation}
from which  it is not hard to get
$$
\displaystyle\partial_\gamma\left(|\mathcal{P}(0;\gamma)|^{p-1}\right):=\lim\limits_{q\to 0^+} \displaystyle\partial_\gamma\left(|\mathcal{P}(q;\gamma)|^{p-1}\right) = c'(\gamma)
$$
using the behaviour of $\Psi$ when $q$ is near 0. Since we are in the case $p>2$, when $q\sim m/(m-\alpha)$  we have
$$
h(s(q))=\frac{h'(1)}{m}\left( q- \frac{m}{m-\alpha} \right)(1+o(1))\text{ and }|\mathcal{P}(q;\gamma)|=\frac{|h'(1)|}{c(\gamma)}\left( \frac{m}{m-\alpha}  -q\right)  (1+o(1)).
$$
Hence,
$$
\begin{aligned}
\Psi(q; \gamma)=&\,k_\sigma  \left(\frac{m}{m-\alpha} - q\right)^{\frac{(\alpha-1)\gamma c(\gamma)}{|h'(1)|}}\exp\left( -\sigma\left( \frac{m}{m-\alpha} - q\right)^{2-p} \right)(1+o(1)),\quad\text{with}\\
\sigma:=&\, \frac{(c(\gamma))^p}{(p-1)(p-2)m |h'(1)|^{p-1}},\\
k_\sigma:=&\,  \left(\frac{m}{m-\alpha}\right)^{\frac{1}{m-\alpha}} \left(\frac{m}{m-\alpha} - 1\right)^{\frac{(1-\alpha)\gamma c(\gamma)}{|h'(1)|}} \exp\left( \sigma \left(\frac{m}{m-\alpha} - 1 \right)^{2-p}\right).
\end{aligned}
$$
Introducing this in the second equality of~\eqref{eq:formula_derivative_pressure}, we can study the behaviour of $\partial_\gamma\left(|\mathcal{P}(q;\gamma)|^{p-1}\right)$ near $m/(m-\alpha)$. Since the integral is convergent, when $q\to m/(m-\alpha)$, the limit has the indeterminate form 0/0. Applying l'H\^opital's rule we  obtain that
$$
 \displaystyle\partial_\gamma\left(|\mathcal{P}(q;\gamma)|^{p-1}\right) \to 0\quad\text{when }q\to \frac{m}{m-\alpha}.
$$
Therefore $\gamma \to \partial_\gamma\left(\left|\mathcal{P}\left(\cdot;\gamma\right)\right|^{p-1}\right)$ is continuous from $\overline{\mathbb{R_+}}$ to $C([0, m/(m-\alpha)])$. With the help of the formulas above it is easy  to prove,  as in~\cite{Du-Quiros-Zhou-Preprint}, that $c''(\gamma)$ exists and is continuous as a function of $\gamma$. Using that
$$
\displaystyle\partial_\gamma\left(|\mathcal{P}(q;\gamma)|^{p-1}\right) = -(p-1)|\mathcal{P}(q;\gamma)|^{p-2}\partial_\gamma\left(\mathcal{P}(q;\gamma)\right),
$$
together with Lemma~\ref{lemma:limits_derivative}, it is also easy to check that $\gamma \to \partial_\gamma\left(\mathcal{P}\left(\cdot;\gamma\right)\right)$ is continuous from $\overline{\mathbb{R_+}}$ to $C([0, m/(m-\alpha)])$.
\end{proof}

\noindent\emph{Remark. } Using the estimate for $c'(\gamma)$, we get that $c(\gamma)\ge c(0)-m\gamma$. Then, through a continuity argument, it is possible to show that $c(\gamma)$ is well defined at least for $\gamma\in [0, c(0)/m)$.

\medskip

The last lemma of this part is similar to the one in~\cite[Lemma 2.7]{Du-Quiros-Zhou-Preprint}.
\begin{Lemma}
Assume~\eqref{eq:slow.regime} and $p\geq 2$.  For any $L>0$ there exists a constant $C=C_L>0$ such that
$$
|\partial_\gamma V(x; \gamma)| \leq C\quad\text{for }x\leq 0,\ \gamma\in [0,L].
$$
\end{Lemma}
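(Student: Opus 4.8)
The plan is to adapt the argument of~\cite[Lemma~2.7]{Du-Quiros-Zhou-Preprint}, working with the representation of $V(\cdot;\gamma)$ through its inverse and the slope function $\mathcal{P}(\cdot;\gamma)$. Since $V(\cdot;\gamma)$ is a decreasing bijection from $(-\infty,0]$ onto $[0,m/(m-\alpha))$ with $V(0;\gamma)=0$, and $\frac{\mathrm{d}}{\mathrm{d}q}V^{-1}(q;\gamma)=1/\mathcal{P}(q;\gamma)$, we have $V^{-1}(q;\gamma)=\int_0^q\mathcal{P}(r;\gamma)^{-1}\,\mathrm{d}r$ for $q\in[0,m/(m-\alpha))$. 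By Lemma~\ref{lemma:c_differentiable} the map $\gamma\mapsto\partial_\gamma\mathcal{P}(\cdot;\gamma)$ is continuous into $C([0,m/(m-\alpha)])$ and $\mathcal{P}(\cdot;\gamma)<0$ on $(0,m/(m-\alpha))$, so differentiating under the integral sign (legitimate for $q<m/(m-\alpha)$) gives $\partial_\gamma V^{-1}(q;\gamma)=-\int_0^q\mathcal{P}(r;\gamma)^{-2}\partial_\gamma\mathcal{P}(r;\gamma)\,\mathrm{d}r$, and the implicit function theorem applied to $V^{-1}(V(x;\gamma);\gamma)\equiv x$ (using $\partial_qV^{-1}=1/\mathcal{P}\neq0$) yields
\[
\partial_\gamma V(x;\gamma)=\mathcal{P}(q;\gamma)\int_0^q\frac{\partial_\gamma\mathcal{P}(r;\gamma)}{\mathcal{P}(r;\gamma)^2}\,\mathrm{d}r,\qquad q:=V(x;\gamma).
\]
This exhibits $\partial_\gamma V$ as a jointly continuous function on $(-\infty,0]\times[0,L]$ which vanishes as $x\to0^-$ (then $q\to0$, while near $r=0$ the integrand is bounded because $\mathcal{P}(0;\gamma)=-(c(\gamma))^\alpha\neq0$); hence $\partial_\gamma V$ is bounded on $[-R,0]\times[0,L]$ for every $R>0$, and it only remains to estimate it for $x$ large and negative.

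For that range the ingredients are: the uniform bound $|\mathcal{P}(q;\gamma)|\le k(L)$ for all $q$ and $\gamma\in[0,L]$ (the bound for the gradient of the travelling-wave pressure, valid uniformly for bounded $\gamma$; see the remarks after Corollary~\ref{cor:bound.flux} and Lemma~\ref{lema_pp_1}); the approach $1-U(\xi;\gamma)\ge1-M\mathrm{e}^{-\lambda|\xi|}$ of Lemma~\ref{lem:behaviour.pressure.infinity}(b), whose constants can be chosen independent of $\gamma\in[0,L]$ because $c(\gamma)$ and $C_{p,\gamma}$ depend continuously on $\gamma$ and stay bounded away from $0$; and the behaviour near the rest state, $|\mathcal{P}(r;\gamma)|=C_{p,\gamma}^{\alpha}\big(\tfrac{m}{m-\alpha}-r\big)(1+o(1))$ together with $\partial_\gamma\big(|\mathcal{P}(r;\gamma)|^{p-1}\big)=O\big((\tfrac{m}{m-\alpha}-r)^{p-1}\big)$ as $r\uparrow\tfrac{m}{m-\alpha}$, which follows by refining (via l'H\^opital's rule) the asymptotics of $\Psi(\cdot;\gamma)$ obtained in the proof of Lemma~\ref{lemma:c_differentiable}, with $o(1)$ and $O$ uniform for $\gamma\in[0,L]$. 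Writing $\delta:=\tfrac{m}{m-\alpha}-V(x;\gamma)$, the second ingredient gives $\delta\le C_0\mathrm{e}^{-\lambda|x|}$ uniformly in $\gamma$, while combining the first and third with the identity $\mathcal{P}^{-2}\partial_\gamma\mathcal{P}=-\tfrac1{p-1}|\mathcal{P}|^{-p}\partial_\gamma(|\mathcal{P}|^{p-1})$ gives $|\mathcal{P}(r;\gamma)^{-2}\partial_\gamma\mathcal{P}(r;\gamma)|\le C_1(L)\big(\tfrac{m}{m-\alpha}-r\big)^{-1}$ on $[\tfrac{m}{2(m-\alpha)},\tfrac{m}{m-\alpha})$ and $|\mathcal{P}(q;\gamma)|\le C_2(L)\,\delta$.

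Splitting $\int_0^q=\int_0^{m/(2(m-\alpha))}+\int_{m/(2(m-\alpha))}^q$ (the second integral empty when $q<\tfrac{m}{2(m-\alpha)}$), the first piece is $O_L(1)$ because the integrand is bounded on $[0,m/(2(m-\alpha))]\times[0,L]$ and $|\mathcal{P}(q;\gamma)|\le k(L)$, while the second is bounded by $C_2(L)\,\delta\cdot C_1(L)\log\!\big(C_3(L)/\delta\big)$, which is bounded uniformly for $\delta\in(0,m/(2(m-\alpha))]$ (and in fact tends to $0$ as $x\to-\infty$). Choosing $R$ so that $V(x;\gamma)\ge\tfrac{m}{2(m-\alpha)}$ for all $x\le-R$ and $\gamma\in[0,L]$ (possible since $V(x;\gamma)\to\tfrac{m}{m-\alpha}$ uniformly in $\gamma$) and combining with the boundedness on $[-R,0]\times[0,L]$ completes the proof.

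The step I expect to be the main obstacle is the analysis near the rest state $U=1$, i.e.\ $q\to\tfrac{m}{m-\alpha}$, $x\to-\infty$: there $\mathcal{P}$ and $\partial_\gamma\mathcal{P}$ both degenerate, $\int^{q}\mathcal{P}^{-2}\partial_\gamma\mathcal{P}$ diverges (logarithmically), and one must exploit exactly its compensation against the linear vanishing of the prefactor $\mathcal{P}(q;\gamma)$, while making all the underlying asymptotics — ultimately those of $\Psi(\cdot;\gamma)$ from the proof of Lemma~\ref{lemma:c_differentiable} — uniform in $\gamma\in[0,L]$. This is precisely where $p\ge2$ enters: for $p\in(1,2)$ the slope $\mathcal{P}$ no longer vanishes linearly at $q=\tfrac{m}{m-\alpha}$ (Lemma~\ref{lemma:limits_derivative}) and $c(\gamma)$ need not even be differentiable, so the argument breaks down.
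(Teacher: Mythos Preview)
Your proposal is correct and follows essentially the same route as the paper, which does not give a proof but refers to \cite[Lemma~2.7]{Du-Quiros-Zhou-Preprint}; the representation of $\partial_\gamma V$ through $\mathcal{P}$ and $V^{-1}$, together with the asymptotics of $\mathcal{P}$ and $\partial_\gamma(|\mathcal{P}|^{p-1})$ near $q=m/(m-\alpha)$ extracted from the proof of Lemma~\ref{lemma:c_differentiable}, is precisely that argument adapted to general $p\ge2$. The only point deserving a bit more care is the uniformity in $\gamma\in[0,L]$ of the various $o(1)$ and $O$ asymptotics near $q=m/(m-\alpha)$, but this indeed follows from the explicit formula for $\Psi(\cdot;\gamma)$ and the continuous dependence of $\sigma$, $k_\sigma$ and $C_{p,\gamma}$ on $\gamma$ over the compact interval $[0,L]$.
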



\section{Spreading}
\label{sect-propagation.vs.vanish} \setcounter{equation}{0}

The first aim of this section is to give conditions guaranteing that there are solutions that spread. The first one is a condition on the reaction nonlinearity, showing that if it is strong enough at the level zero then all nontrivial (nonnegative) solutions spread. This was called in the semilinear case the \textit{hair-trigger effect}~\cite[Chapter 3]{Aronson-Weinberger-1978}. The second one is a sufficient condition on the initial datum, valid for all reaction nonlinearities satisfying~\eqref{eq:reaction}. The second aim is to show that the aymptotic spreading speed, when spreading happens, is given by the velocity $c^*$ of the unique finite wavefront. These results extend to the slow diffusion regime the results developed for the semilinear case by Aronson and Weinberger in their famous work~\cite{Aronson-Weinberger-1978}.

\subsection{The hair-trigger effect}

We will now prove Theorem~\ref{thm:spreading.vs.vanishing}, which says that if the reaction at the level zero is strong enough, then all nontrivial nonnegative solutions spread. The proof is based on the fact that all nontrivial nonnegative solutions to
\begin{equation}
\label{eq:Fujita}
u_t=\Delta_p u^m+ku^{q_F},
\end{equation}
where $k>0$ and $q_F:=m(p-1)+p/N$ is the so-called Fujita exponent,
blow up in finite time; see~\cite{Galaktionov-Levine-1998}.
Since we will work in parallel with~\cite[Section 3]{Aronson-Weinberger-1978} and~\cite[Section 4]{Garriz-2020}, we will be rather sketchy, and give details of the proofs only  when we consider it necessary.

The first step is to show that $\|u(\cdot,t)\|_{L^\infty(\mathbb{R}^N)}$ approaches 1 along some sequence of times.
\begin{Lemma}\label{lema11}
	Under the hypotheses of   Theorem~\ref{thm:spreading.vs.vanishing},
$
\displaystyle\limsup\limits_{t \rightarrow \infty} \|u(\cdot,t)\|_{L^\infty(\mathbb{R}^N)} =1$.
\end{Lemma}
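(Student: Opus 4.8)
The plan is to establish separately that $\limsup_{t\to\infty}\|u(\cdot,t)\|_{L^\infty(\mathbb{R}^N)}$ is at most $1$ and at least $1$; only the lower bound uses the strong‑reaction hypothesis, and it is there that the finite‑time blow‑up for the Fujita equation~\eqref{eq:Fujita} comes into play, following the scheme of~\cite[Section 3]{Aronson-Weinberger-1978} and~\cite[Section 4]{Garriz-2020}. For the upper bound I would compare $u$ with the spatially homogeneous supersolution $y=y(t)$ of~\eqref{eq:main} solving $y'=h(y)$, $y(0)=\max\{1,\|u_0\|_{L^\infty(\mathbb{R}^N)}\}$: since $h(1)=0$ and $h<0$ on $(1,\infty)$, $y$ is non‑increasing with $y(t)\to1$, and the comparison principle yields $u\le y$, hence $\limsup_{t\to\infty}\|u(\cdot,t)\|_{L^\infty(\mathbb{R}^N)}\le1$.

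For the lower bound I would argue by contradiction. Suppose $\ell:=\limsup_{t\to\infty}\|u(\cdot,t)\|_{L^\infty(\mathbb{R}^N)}<1$, and fix $\varepsilon\in(0,1-\ell)$ and $T>0$ with $0\le u(\cdot,t)\le1-\varepsilon$ for all $t\ge T$. The hypothesis of Theorem~\ref{thm:spreading.vs.vanishing} provides $\delta_0\in(0,1)$ and $k_0>0$ with $h(u)\ge k_0u^{q_F}$ on $[0,\delta_0]$, while on the compact interval $[\delta_0,1-\varepsilon]$ the continuous nonlinearity $h$ is strictly positive, so setting $m_0:=\min_{[\delta_0,1-\varepsilon]}h>0$ one has $h(u)\ge m_0\ge m_0(1-\varepsilon)^{-q_F}u^{q_F}$ there. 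Thus $h(u)\ge k\,u^{q_F}$ on all of $[0,1-\varepsilon]$, with $k:=\min\{k_0,m_0(1-\varepsilon)^{-q_F}\}>0$. Since $u$ takes values in $[0,1-\varepsilon]$ for $t\ge T$, this shows that $u$ is a weak supersolution of~\eqref{eq:Fujita} on $\mathbb{R}^N\times(T,\infty)$.

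Next I would let $w$ be the weak solution of~\eqref{eq:Fujita} with initial datum $w(\cdot,T)=u(\cdot,T)$, which is bounded, nonnegative, compactly supported (finite propagation,~\cite{Gilding-Kersner-1996}) and nontrivial. Nontriviality of $u(\cdot,T)$ follows because the solution $v$ of the pure diffusion equation $v_t=\Delta_p v^m$ started from $\min\{u_0,1/2\}$ stays in $[0,1/2]$, hence --- using $h\ge0$ on $[0,1]$ in the monostable case --- is a subsolution of~\eqref{eq:main} lying below $u$, while $\int_{\mathbb{R}^N}v(\cdot,t)$ is conserved and positive. By~\cite{Galaktionov-Levine-1998}, which covers the critical exponent $q=q_F$, the solution $w$ blows up at some finite time $T^*>T$, i.e.\ $\|w(\cdot,t)\|_{L^\infty(\mathbb{R}^N)}\to\infty$ as $t\nearrow T^*$. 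On the other hand, the comparison principle for~\eqref{eq:Fujita} applied to the solution $w$ and the supersolution $u$, which agree at $t=T$, gives $w(\cdot,t)\le u(\cdot,t)\le1-\varepsilon$ for all $t\in[T,T^*)$, a contradiction. Hence $\limsup_{t\to\infty}\|u(\cdot,t)\|_{L^\infty(\mathbb{R}^N)}\ge1$, which together with the upper bound proves the lemma.

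The routine ingredients are the ODE comparison, the explicit lower bound for $h$ on $[0,1-\varepsilon]$, and the persistence of nontriviality. The delicate points, which I expect to be the crux, are that $u$ is a supersolution of~\eqref{eq:Fujita} only on the time range $t\ge T$ where it is known to stay below $1-\varepsilon$ --- so the contradiction must be set up on $[T,T^*)$ rather than on $[0,\infty)$ --- and the appeal to finite‑time blow‑up for the \emph{critical} Fujita exponent $q_F$, which is still in the regime where every nontrivial nonnegative solution blows up; the argument ultimately rests on this fact from~\cite{Galaktionov-Levine-1998}.
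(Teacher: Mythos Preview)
Your proposal is correct and follows essentially the same approach as the paper, which explicitly states that the proof is analogous to \cite[Lemma~3.2]{Aronson-Weinberger-1978} and ``uses a comparison argument with a solution of problem~\eqref{eq:Fujita}''. Your argument---the ODE comparison for the upper bound, and the contradiction via blow-up of the Fujita equation~\eqref{eq:Fujita} (after bounding $h(u)\ge k u^{q_F}$ on $[0,1-\varepsilon]$) for the lower bound---is precisely this scheme, carried out in the doubly nonlinear setting as in~\cite[Section~4]{Garriz-2020}.
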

The proof of this statement, which is analogous to the one of~\cite[Lemma 3.2]{Aronson-Weinberger-1978}, uses a comparison argument with a solution of problem~\eqref{eq:Fujita}.  Note that this is not enough to obtain convergence to~1 in compact sets: our solution could behave like a \lq\lq spike'', or approach 1 in a set that travels to infinity.

The following lemma is based in the work of Kanel'~\cite{Kanel-1964}. We already presented a sketch of the proof for the case $p=2$ in~\cite[Lemma 4.2]{Garriz-2020}, commenting the principal differences with the one in~\cite[Lemma 3.3]{Aronson-Weinberger-1978} for the case $m=1$, $p=2$. Exploring the case $p\neq2$ a simpler proof has been found.

\begin{Lemma}\label{lema13}
	Let $\hat h$ be a reaction nonlinerity satisfying~\eqref{eq:reaction} with $a=0$ and such that
	$$
	\hat h(\hat u)=k \hat u^{q_F}\quad \text{for }\hat u\in [0,b]
	$$
	for some $b\in(0,1)$  and $k>0$. 		
Let $\hat u$ denote the solution of the problem
	$$
	\hat u_t = \Delta_p \hat u^m + \hat h(\hat u),\qquad \hat u(x,0) =\begin{cases} \delta (1-|x|^\sigma)^\beta\quad&\text{if } 0 \le |x| \leq 1,\\
	0\quad&\text{if }|x|>1,
	\end{cases}
$$
for some 	
$\delta\in(0,1)$, $\sigma>\frac{p}{p-1}$ and $\beta>0$. If $\sigma$ and $\beta$ are big enough, then
$\lim\limits_{t \rightarrow \infty} \hat u(0,t)  =1$.
\end{Lemma}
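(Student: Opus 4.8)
\textbf{Proof plan for Lemma~\ref{lema13}.}

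The strategy is a standard \emph{subsolution-then-propagation} argument, adapted to the doubly nonlinear setting along the lines of~\cite[Section 3]{Aronson-Weinberger-1978} and~\cite[Section 4]{Garriz-2020}. First I would establish the dichotomy: since $\hat u_0$ is nontrivial, nonnegative and compactly supported, by comparison $\hat u(0,t)$ stays in $(0,1)$ for all $t>0$ once we know $\hat u_0\le 1$ (here $\delta<1$ and $\sigma,\beta>0$ give $\|\hat u_0\|_\infty=\delta<1$, and the constant $1$ is a supersolution because $\hat h(1)\le 0$). Moreover $1$ is also an upper barrier preserved by the equation, so it suffices to produce a lower bound approaching $1$. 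The plan is to show that $\hat u$ spreads, i.e.\ converges to $1$ uniformly on compact sets, which in particular forces $\hat u(0,t)\to 1$.

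The key step is to pass to the \emph{radial} variable and use the structure $\hat h(\hat u)=k\hat u^{q_F}$ near zero together with the finite-time blow-up for problem~\eqref{eq:Fujita}. I would argue as follows. Suppose, for contradiction, that $\hat u$ does not spread; then (using Theorem~\ref{thm:spreading.vs.vanishing} is exactly what we are proving, so instead I work directly) either $\limsup_{t\to\infty}\hat u(0,t)<1$, or $\hat u$ stays small on every fixed ball. By Lemma~\ref{lema11} we know $\limsup_t \|\hat u(\cdot,t)\|_\infty=1$, so the supremum is attained — by radial monotonicity of the solution, which is inherited from the radial monotonicity of $\hat u_0$ provided $\sigma,\beta$ are chosen so that $\hat u_0$ is radially decreasing and this monotonicity is preserved (a standard reflection/comparison argument) — at the origin: $\|\hat u(\cdot,t)\|_\infty=\hat u(0,t)$. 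Hence $\limsup_t\hat u(0,t)=1$. Now I would invoke a continuity-in-time / zero-number type argument (or simply the fact that once $\hat u(0,t)$ comes close enough to $1$ it cannot drop back, because near $1$ the reaction is positive and a small constant perturbation below $1$ is a subsolution that increases): this pins $\liminf_t \hat u(0,t)$ near $1$, and a bootstrap on shrinking the gap to $1$ — exploiting $\hat h'(1)<0$ via the one-dimensional ODE $\dot\psi=\hat h(\psi)$ whose solutions below $1$ increase monotonically to $1$ — yields $\hat u(0,t)\to 1$.

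The genuinely delicate point, and where the hypotheses $\sigma>p/(p-1)$ and $\sigma,\beta$ large enter, is showing that the profile $\delta(1-|x|^\sigma)_+^\beta$ lies \emph{below} a suitable subsolution of~\eqref{eq:Fujita} on a ball, so that comparison with the blowing-up solution of~\eqref{eq:Fujita} forces $\hat u$ to grow to height $b$ somewhere (and then spreading takes over, since above height $b$ the reaction $\hat h$ is just a generic monostable one and the wavefront machinery of Subsection~\ref{subsect:TWs} applies). Concretely: the exponent condition $\sigma>p/(p-1)=1/\alpha+1$ is precisely what guarantees that $\hat u_0^m$ has a gradient matching the diffusion scaling near the free boundary $|x|=1$, so that $\hat u_0$ is itself a (weak) subsolution of the purely diffusive equation $u_t=\Delta_p u^m$ there; combined with $\hat h\ge 0$ on $[0,b]$ this makes $\hat u_0$ a subsolution of~\eqref{eq:Fujita} as long as $\delta\le b$. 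One then compares $\hat u$ with the solution $\underline u$ of~\eqref{eq:Fujita} with the same initial datum: $\hat u\ge \underline u$ while both remain $\le b$, but $\underline u$ blows up in finite time, which is impossible unless $\hat u$ (hence $\underline u$) exits $[0,b]$ at the origin in finite time. The role of $\beta$ large is to make the comparison at $|x|=1$ robust (ensuring the Bernstein-type flux estimate of Theorem~\ref{thm:bound.flux} does not obstruct the construction and that $\hat u_0$ is $C^1$ in the pressure variable up to the interface). I expect verifying this subsolution property rigorously — i.e.\ checking the distributional inequality~\eqref{eq:weak.solution} across the free boundary of $\hat u_0$ — to be the main obstacle; everything after ``$\hat u$ reaches height $b$ at the origin'' is the soft spreading argument via travelling waves and Theorem~\ref{thm:spreading}(ii).
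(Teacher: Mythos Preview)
Your overall architecture is close to the paper's, and you correctly identify the two soft ingredients (radial monotonicity inherited from $\hat u_0$, so that $\|\hat u(\cdot,t)\|_\infty=\hat u(0,t)$; and Lemma~\ref{lema11}, which then gives $\limsup_{t\to\infty}\hat u(0,t)=1$). The genuine gap is the passage from $\limsup$ to $\liminf$. Your proposed mechanisms for ``once near $1$ it cannot drop back'' do not work here: a constant $c\in(0,1)$ is indeed a subsolution, and so is the flat ODE solution $\dot\psi=\hat h(\psi)$, but $\hat u(\cdot,t)$ has compact support in $x$ for every $t$, so it is \emph{never} globally above any positive constant and these comparisons are vacuous. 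Zero-number arguments are one-dimensional tools with no obvious analogue here. And ``$\hat u$ reaches height $b$ at the origin'' does not by itself trigger Theorem~\ref{thm:spreading}(ii) (which already presupposes spreading) nor the $\mathfrak u_0$ condition of Section~\ref{Over the Fujita exponent} (which requires height $\eta$ on a \emph{large} ball, not just at one point).

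The missing idea --- and this is exactly where the hypotheses on $\sigma,\beta$ are used --- is that $\underline u(x):=\hat u_0(x)$ is a \emph{stationary} subsolution of the full equation with reaction $\hat h$ (not merely of~\eqref{eq:Fujita} near the interface). A direct computation of $-\Delta_p\underline u^m-\hat h(\underline u)$ shows it is nonpositive everywhere provided $\sigma>p/(p-1)$, $\beta>p/(m(p-1))$, and $\beta$ is then taken large enough to make the coefficient coming from the second-order term dominate the first-order one. Once you have $\hat u(\cdot,t)\ge\hat u_0$ for all $t\ge0$, comparing $\hat u(\cdot,t+s)$ with $\hat u(\cdot,t)$ (same equation, ordered initial data) yields that $t\mapsto\hat u(x,t)$ is nondecreasing for every $x$. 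In particular $\eta^*:=\lim_{t\to\infty}\hat u(0,t)$ exists, and since $\hat u(x,t)\le\hat u(0,t)\le\eta^*$ by radial monotonicity, Lemma~\ref{lema11} forces $\eta^*=1$. You were on the right track in sensing that $\hat u_0$ should be a subsolution; the point is to use that fact for time monotonicity of $\hat u$ rather than for a comparison with the Fujita problem.
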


\begin{proof}
Since the initial datum is radially symmetric and nonincreasing in $|x|$, it is easy to check through well-known comparison arguments that the same is true for $\hat u(\cdot,t)$ for  all $t>0$. Hence,
\begin{equation}
\label{eq:first.step}
0 \leq \hat u(x,t) \leq \hat u(0,t) \leq 1.
\end{equation}

The next step is to show that $\hat u(0,t)$ is nondecreasing, so that its limit exists.  To this aim we define 
$\underline u(x):=\hat u(x,0)$. Then
\[
\underline u_t-\Delta_p\underline u^m-\hat h(\underline u)=-\Delta_p\underline u^m-\hat h(\underline u)=Z,
\]
with
\begin{gather*}
Z(x):=\begin{cases}-\big(\delta(1-|x|^\sigma)^\beta \big)^{m(p-1)}\left\lbrace C_1(1-|x|^\sigma)^{\beta p /N} + C_2 \frac{|x|^{p(\sigma-1)}}{(1-|x|^\sigma)^p} - C_3 \frac{|x|^{p(\sigma-1)-\sigma}}{(1-|x|^\sigma)^{p-1}}\right\rbrace&\text{if } |x|\le 1,\\
0&\text{if } |x|>1,
\end{cases}
\\
C_1=k\delta^{p/N},\ C_2=\sigma(\beta m -1)(p-1)(\sigma\beta m)^{p-1}\text{ and } C_3=\big((p-1)(\sigma-1)+N-1 \big)(\sigma\beta m)^{p-1}.
\end{gather*}
If $\beta> p/(m(p-1))$ and $\sigma> p/(p-1)$, then $Z$ is continuous. Taking $\beta$ big enough (and satisfying the previous condition), we can make $C_2\ge C_3$, and hence $Z(x)\leq 0$ for all $x$. Moreover, for such $\beta$, $\underline u$ is smooth.
Thus $\underline u$ is a lower solution to the initial value problem satisfied by $\hat u$. The comparison principle then implies $\underline u(x)\leq \hat u(x,t)$ for all $t\geq 0$.
Take an arbitrary constant $s>0$.  Clearly $(x,t)\to \hat u(x, s+t)$ is the unique solution of the initial value problem
\[
u_t=\Delta_p u^m+h(u),\;\; u(x,0)=\hat u(x,s).
\]
Since $\hat u(x, s)\geq \underline u(x)=\hat u(x,0)$, by  the comparison principle again we deduce that $\hat u(x, s+t)\geq \hat u(x, t)$ for all $t\geq 0$ and $x\in \mathbb R^N$; that is, $\hat u(x, t)$ is nondecreasing in $t$. Thus
$
\eta^* := \lim_{t \rightarrow \infty} \hat u(0,t)
$
exists. 
According to \eqref{eq:first.step},
$0 \leq \hat u(x,t) \leq \hat u(0,t) \le \eta^*$ for all $x\in\mathbb{R}^N$, $t>0$,
which is a contradiction with Lemma~\ref{lema11} if $\eta^* <1$.
\end{proof}
The proof of Theorem~\ref{thm:spreading.vs.vanishing} now follows easily,  reasoning as in~\cite[Theorem 3.1]{Aronson-Weinberger-1978}.

\subsection{Conditions on the initial data}\label{Over the Fujita exponent}

We are now ready to prove that, given any reaction nonlinearity satisfying~\eqref{eq:reaction}, solutions to~\eqref{eq:main} whose initial datum is big enough spread.

Let us fix $\rho>0$. This will be the radius of a ball where a certain radial function is flat. We will take the convection parameter $\gamma = (N-1)/\rho$. With $\rho$ big enough,  $\gamma\in[0,\gamma_*)$ and so $c(\gamma)>0$ is well defined. We select now $c \in (0,c(\gamma))$ for the speed of propagation. Given $\eta\in(\beta_c,1)$,  let $U^{c,\eta}$ be the function provided by Lemma~\ref{lem:profile.subsolution}.  We define
$$
\mathfrak{u}_0(x;\rho,c,\eta)=\mathfrak{U}(|x|;\rho,c,\eta), \qquad\text{where }\mathfrak{U}(r;\rho,c,\eta):= \begin{cases}
             \eta,\quad &r\leq \rho,
             \\
             U^{c,\eta}(r - \rho),\quad &\rho < r\leq \rho + b,
             \\
             0,\quad &r\geq \rho + b.
             \end{cases}
$$
\begin{Lemma}\label{lema_propagation}
Let $\mathfrak{u}$ be the solution of \eqref{eq:main} with initial datum $\mathfrak{u}(x,0)=\mathfrak{u}_0(x;\rho,c,\eta)$. Then
$\lim\limits_{t \rightarrow \infty} \mathfrak{u}(x,t) = 1$
uniformly in compact subsets of~$\mathbb{R}^N$, and $\mathfrak{u}(x,t) \geq \eta$ for $|x| \leq \rho + c t$, $t \geq 0$.
\end{Lemma}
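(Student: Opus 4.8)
The strategy is to construct a radial subsolution of the Cauchy problem \eqref{eq:main} out of the profile $\mathfrak U(\cdot;\rho,c,\eta)$ and a translation in the radial coordinate, exploiting the fact that for $r\ge\rho$ the geometric term $(N-1)/r$ is dominated by the constant convection parameter $\gamma=(N-1)/\rho$ that was used to build $U^{c,\eta}$. First I would verify that $\underline u(x):=\mathfrak U(|x|-ct;\rho,c,\eta)$, suitably interpreted (with the understanding that $\mathfrak U$ is extended by $\eta$ for negative arguments), is a generalized subsolution of the radial equation \eqref{eq:radial.equation}. On the flat part $|x|\le\rho+ct$ one has $\mathfrak U\equiv\eta$ and $h(\eta)>0$ since $\eta\in(\beta_c,1)\subset(a,1)$, so $\mathcal L\underline u=-h(\eta)<0$ there. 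On the travelling part $\rho+ct<|x|<\rho+b+ct$, write $\xi=|x|-\rho-ct$ and use that $U^{c,\eta}$ solves \eqref{eq:profile.subsolution} with convection coefficient exactly $\gamma=(N-1)/\rho$. Since $r=|x|\ge\rho$ forces $(N-1)/r\le\gamma$, and since $(U^{c,\eta})^m)'\le 0$ so the flux term $|((U^{c,\eta})^m)'|^{p-2}((U^{c,\eta})^m)'\le 0$, replacing $\gamma$ by the smaller coefficient $(N-1)/r$ only increases the left-hand side of the equation; hence $\underline u$ is a subsolution of \eqref{eq:radial.equation} on this region. The only delicate points are the two interfaces $r=\rho+ct$ and $r=\rho+b+ct$: at the first the gradient of $U^{c,\eta}$ vanishes (the boundary condition $((U^{c,\eta})^m)'(0)=0$), so $\underline u\in C^1$ in the pressure variable and the subsolution property is preserved across it in the weak sense; at the free boundary $r=\rho+b+ct$, $\underline u$ drops to $0$ with a jump in the flux of size $-\nu<0$ — but a downward jump in the (inward-pointing) flux across the free boundary is exactly what is allowed for a subsolution, so this is fine. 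I would state this verification as the bulk of the proof, referring to \cite{Garriz-2020} for the completely analogous porous-medium computation.

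Once $\underline u(x,t)=\mathfrak U(|x|-ct;\rho,c,\eta)$ is known to be a subsolution with $\underline u(\cdot,0)=\mathfrak u_0(\cdot;\rho,c,\eta)=\mathfrak u(\cdot,0)$, the comparison principle (available as discussed in the well-posedness subsection) yields
$$
\mathfrak u(x,t)\ge \mathfrak U(|x|-ct;\rho,c,\eta)\quad\text{for all }x\in\mathbb R^N,\ t\ge0.
$$
In particular $\mathfrak u(x,t)\ge\eta$ whenever $|x|-ct\le\rho$, i.e. for $|x|\le\rho+ct$, which is the second assertion of the lemma.

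For the convergence to $1$ uniformly on compact sets, fix any $R>0$ and any $t_0$ with $ct_0>R$ — then $\mathfrak u(x,t_0)\ge\eta$ on $\{|x|\le R\}$, and more generally $\mathfrak u(\cdot,t)\ge\eta$ on larger and larger balls as $t$ grows. Now I would invoke Theorem~\ref{thm:spreading} (part (ii)): it suffices to show that $\mathfrak u$ spreads in the sense used there, and the lower bound $\mathfrak u(x,t)\ge\eta>a$ on the expanding family of balls $\{|x|\le\rho+ct\}$ (with $c>0$) is more than enough to conclude that $\mathfrak u$ cannot vanish; hence it spreads, and then part (ii) of Theorem~\ref{thm:spreading} gives $\min_{|y|\le c't}\mathfrak u(y,t)\to1$ for every $c'\in(0,c^*)$, which in particular forces $\mathfrak u(x,t)\to1$ uniformly on every compact set. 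Alternatively, and self-containedly, one can build an increasing family of compactly supported radial subsolutions of the form $f(t)U_{c^*}(|x|-g(t))$ from Lemma~\ref{lema2} (or directly compare with a shifted and slightly lifted copy of $\mathfrak U$ using that $h$ is strictly positive and nondegenerate near $u=1$) to push the value from $\eta$ up toward $1$. I would adopt the first route, citing Theorem~\ref{thm:spreading}.

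The main obstacle I anticipate is the careful treatment of the two interfaces in the subsolution verification — in particular making precise, in the weak formulation \eqref{eq:weak.solution}, that the downward flux jump at $r=\rho+b+ct$ is admissible for a subsolution and that the $C^1$-matching (in the pressure) at $r=\rho+ct$ loses nothing. Everything else is a direct comparison argument: the genuinely new input, namely the inequality $(N-1)/r\le\gamma$ for $r\ge\rho$ that lets one absorb the geometric term into the constant convection already built into $U^{c,\eta}$, is a one-line observation once the ODE lemmas of Section~\ref{sect-subsupersolution} are in hand.
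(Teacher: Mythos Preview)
Your subsolution construction is essentially the paper's: the paper also builds the moving barrier $\mathfrak{U}(|x|-c_1t;\rho,c,\eta)$ (it takes $c_1\in(0,c)$, but as your computation shows, $c_1=c$ already gives $\mathcal{L}W=\big(\frac{N-1}{|x|}-\gamma\big)|((U^{c,\eta})^m)'|^{p-1}\le0$ on the travelling part, so this is a cosmetic difference). The interface analysis you sketch---zero flux at $r=\rho+ct$, negative flux jump at $r=\rho+b+ct$---is exactly what the paper checks. So the second assertion, $\mathfrak u\ge\eta$ on $\{|x|\le\rho+ct\}$, is fine.

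The genuine gap is in your argument for uniform convergence to~$1$. You invoke Theorem~\ref{thm:spreading}~(ii), but in the paper's logical structure that theorem is a \emph{consequence} of the present lemma: Theorem~\ref{tma_over_fujita} is derived from Lemma~\ref{lema_propagation}, and Theorem~\ref{thm:spreading} is then read off from Theorem~\ref{tma_over_fujita}. So your first route is circular. Moreover, the hypothesis of Theorem~\ref{thm:spreading}~(ii) is ``spreading happens'', which by definition means convergence to~$1$ on compacts---precisely what you are trying to prove---and you cannot deduce spreading from ``$\mathfrak u$ does not vanish'', since the paper explicitly allows threshold behaviours distinct from both spreading and vanishing. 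Your alternative route via Lemma~\ref{lema2} also fails as stated: that lemma is for $N=1$ only, and its subsolutions start from height $f_0\in(1-\delta,1)$, which you do not yet know $\mathfrak u$ attains.

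The paper's argument is different and self-contained: from the subsolution inequality one has, for any $s>0$, $\mathfrak u(x,s)\ge W(x,s)\ge W(x,0)=\mathfrak u(x,0)$; comparing $\mathfrak u(\cdot,\cdot+s)$ with $\mathfrak u$ then shows that $t\mapsto\mathfrak u(x,t)$ is nondecreasing. Since $\mathfrak u\le1$ eventually (compare with a flat supersolution), the monotone limit $\tau(x)=\lim_{t\to\infty}\mathfrak u(x,t)\in[\eta,1]$ exists, and one identifies $\tau\equiv1$ as in \cite[Lemma~5.1]{Aronson-Weinberger-1978}. You should replace your convergence step by this monotonicity argument.
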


\begin{proof}
We choose an arbitrary $c_1\in (0,c)$ and define
$W(x,t) = \mathfrak{U}(|x| - c_1t;\rho,c,\eta)$. Denoting  $\xi=|x|-\rho-c_1t$, a direct computation shows that
$$
\begin{aligned}
\mathcal{L} W(x,t)  =&\, -c_1(U^{c,\eta})'(\xi) - \big(|((U^{c,\eta})^m)'|^{p-2}((U^{c,\eta})^m)'\big)' (\xi)\\
&+ \frac{N-1}{|x|}|((U^{c,\eta})^m)'|^{p-1}(\xi) - h(U^{c,\eta})(\xi) \\
\le&\, (c_1-c)|(U^{c,\eta})'|(\xi)\le 0\qquad\text{if }\rho + c_1t < |x|< \rho + b + c_1t,
 \end{aligned}
$$
where we have used that $|x|>\rho$ and the equation,~\eqref{eq:profile.subsolution}, satisfied by $U^{c,\eta}$.
On the other hand, $\mathcal{L}W(x,t)=-h(\eta)<0$ if $|x|< \rho + c_1t$, since $\eta\in(\beta_c,1)$, and, trivially, $\mathcal{L}W(x,t)=0$ if $|x|> \rho + b + c_1t$. Summarizing, $\mathcal{L}W\le0$ if $|x|\neq \rho + c_1t,\rho + b + c_1t$.

We finally observe that the flux $|\nabla W^m|^{p-2}\nabla W^m$ is continuous across the hypersurface $|x|= \rho + c_1t$, and that, though it is not continuous across  the hypersurface $|x|= \rho + b + c_1t$, it has a negative jump when we cross it from the inside to the outside. All together is enough to prove that~$W$ is a weak subsolution for our problem. Since, by comparison, $W(x,t) \leq \mathfrak{u}(x,t)$ for $x\in \mathbb{R}^N$, $t\geq 0$ the second assertion of the lemma follows.

On the other hand,  since $\mathfrak{u}(x,h)\geq W(x,h) \geq W(x,0)=\mathfrak{u}(x,0)$ for any $h>0$ we have, comparing $\mathfrak{u}(x,t+h)$ and $\mathfrak{u}(x,t)$, that $\mathfrak{u}$ is a nondecreasing  function of time that is bounded above by the value 1. Thus, $\mathfrak{u}(x,t)$ converges uniformly on compact sets to a certain $\tau(x)\in[\eta,1]$. At this point it is easy to prove, as in~\cite[Lemma 5.1]{Aronson-Weinberger-1978}, that necessarily $\tau(x) = 1$.
\end{proof}

We can now assert the main result of this subsection, from which Theorem~\ref{thm:spreading} follows.

\begin{Theorem}\label{tma_over_fujita}
Let $u$ be a solution of~\eqref{eq:main} such that
$u(x,0) \geq \mathfrak{u}_0(x-x_0;\rho,c,\eta)$ for some $x_0 \in \mathbb{R}^N$, and admissible $\rho,c,\eta >0$.
Then, for any $y_0 \in \mathbb{R}^N$,
$$
\begin{array}{ll}
\lim\limits_{t \rightarrow \infty} \min\limits_{|y - y_0| \leq ct} u(y,t) = 1 \qquad &\text{if }c \in [0,c^*),\\
\lim\limits_{t \rightarrow \infty} u(y,t) = 0\quad\text{for }|y - y_0| \geq ct\quad&\text{if }c>c^*.
\end{array}
$$
\end{Theorem}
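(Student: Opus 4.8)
The plan is to deduce the dichotomy (spreading versus vanishing beyond speed $c^*$) from two facts already available in the excerpt: the lower bound coming from the subsolution $\mathfrak{u}_0(\cdot;\rho,c,\eta)$ via Lemma~\ref{lema_propagation}, and the upper bound coming from comparison with the finite wavefront of critical speed $c^*$. Concretely, if $u(x,0)\ge \mathfrak{u}_0(x-x_0;\rho,c,\eta)$, then by the comparison principle $u(x,t)\ge \mathfrak{u}(x-x_0,t)$ for all $t\ge0$, where $\mathfrak{u}$ is the solution with initial datum $\mathfrak{u}_0(\cdot;\rho,c,\eta)$. In particular $u$ spreads, i.e. $u(\cdot,t)\to1$ uniformly on compact sets; moreover the quantitative lower bound $\mathfrak{u}(x,t)\ge\eta$ for $|x|\le\rho+ct$ propagates this positivity with a \emph{linear-in-time} inner radius. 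This is not yet the assertion for \emph{all} $c\in[0,c^*)$; the passage from the particular speed $c<c(\gamma)$ appearing in the subsolution to the optimal threshold $c^*$ is exactly where Theorem~\ref{thm:spreading}(ii) enters. So the first move is to invoke Theorem~\ref{thm:spreading}(ii): once we know that $u$ spreads (which we have just established), for every $c\in(0,c^*)$ and every $y_0$ we get $\min_{|y-y_0|\le ct}u(y,t)\to1$. The case $c=0$ is the definition of spreading. This settles the first line of the statement.

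For the second line I would argue by comparison from above with a shifted critical wavefront. Since $u_0$ is bounded and compactly supported, pick $R>0$ with $\operatorname{supp}u_0\subset B_R(x_0)$ and $\|u_0\|_\infty\le M$. The comparison function will be built from the one-dimensional supersolution of Lemma~\ref{lema2}(ii): take $\overline w(x,t)=w(|x|-R';t)$ for a suitable $R'$, where $w(\xi,t)=f(t)U_{c^*}(\xi-g(t))$ with $f_0\in(1,1+\delta)$ chosen so that $f_0U_{c^*}(\xi-g_0)\ge u_0(x_0+\cdot)$ on the relevant range (possible because $U_{c^*}(-\infty)=1$ and $f_0>1$, so $f_0U_{c^*}$ exceeds $M$ to the left of its front, and one only needs the front of $w$ to start to the right of $B_R$). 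Because we are in the radial variable $r=|x|$ and the extra convection-type term $\frac{N-1}{r}|(u^m)_r|^{p-2}(u^m)_r$ has the \emph{favourable} sign for a nonincreasing profile (it is $\le0$ where $(U_{c^*}^m)'\le0$), a function that is a one-dimensional supersolution of~\eqref{eq:main} is \emph{a fortiori} a supersolution of the radial equation~\eqref{eq:radial.equation}; hence $\overline w$ dominates $u$ for all $t\ge0$. By Lemma~\ref{lema2}, $g(t)-c^*t\to\xi_0$ for some finite $\xi_0$, so the support of $\overline w(\cdot,t)$ is contained in $\{|x|\le R'+\xi_0+c^*t+o(t)\}$; consequently $u(x,t)=0$ whenever $|x-x_0|$ (and hence $|x|$ up to the constant $|x_0|$) exceeds $c^*t+$ constant, which for any fixed $c>c^*$ forces $u(y,t)=0$ on $\{|y-y_0|\ge ct\}$ once $t$ is large. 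Taking the limit gives $\lim_{t\to\infty}u(y,t)=0$ there (trivially, as $u\equiv0$ on that set for large $t$). As an alternative to the tailored supersolution one may cite Theorem~\ref{thm:spreading}(i) directly, which already gives exactly this vanishing-beyond-$ct$ statement for $c>c^*$; I would mention that option as a shortcut.

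The main obstacle I anticipate is purely bookkeeping: making the comparison principle for \emph{weak} sub/supersolutions rigorous across the free boundary of $\overline w$, where the flux has a jump of the right sign, and handling the reduction from the genuinely $N$-dimensional problem~\eqref{eq:main} to the radial one-dimensional comparison (the function $\overline w(x,t)=w(|x|-R';t)$ is only Lipschitz across $r=0$, but $w'\le0$ means it is a supersolution there as well). None of this is deep — it is the same kind of argument used in the proof of Lemma~\ref{lema_propagation} — but it must be stated carefully. A secondary point is that the lower-bound half formally only needs that $u$ spreads plus Theorem~\ref{thm:spreading}(ii); one must make sure the hypothesis \lq\lq$u(x,0)\ge\mathfrak{u}_0(x-x_0;\rho,c,\eta)$\rq\rq{} indeed implies spreading, which is immediate from Lemma~\ref{lema_propagation} and comparison. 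With these two ingredients assembled, the theorem follows, and Theorem~\ref{thm:spreading} is then its immediate specialization.
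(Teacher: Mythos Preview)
Your argument has a circularity problem in the first half. You invoke Theorem~\ref{thm:spreading}(ii) to pass from the particular speed $c$ in the subsolution to an arbitrary speed below $c^*$, but in the paper Theorem~\ref{thm:spreading} is a \emph{consequence} of Theorem~\ref{tma_over_fujita}, not an independent input (see the sentence introducing Theorem~\ref{tma_over_fujita}). The same circularity appears when you offer Theorem~\ref{thm:spreading}(i) as a shortcut for the second half. So as written the proof begs the question.

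The missing idea for the first assertion is a bootstrap, which is what the paper means by ``arguing as in the proof of~[Lemma~5.3]{Aronson-Weinberger-1978}''. From Lemma~\ref{lema_propagation} and comparison you know $u$ spreads. Now fix any $c_1\in(0,c^*)$ and any $\eta_1\in(a,1)$. Choose $\rho_1$ large enough that $\gamma_1=(N-1)/\rho_1$ satisfies $c(\gamma_1)>c_1$ (possible since $c(\gamma)\to c(0)=c^*$ as $\gamma\to0$), and then pick $\eta_1\in(\beta_{c_1},1)$. Because $u(\cdot,t)\to1$ uniformly on compacts, there is a time $T_1$ with $u(\cdot,T_1)\ge\mathfrak{u}_0(\cdot-x_0;\rho_1,c_1,\eta_1)$. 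Applying Lemma~\ref{lema_propagation} again from time $T_1$ gives $u(y,t)\ge\eta_1$ for $|y-x_0|\le\rho_1+c_1(t-T_1)$, which eventually covers $\{|y-y_0|\le ct\}$ for any $c<c_1$. Since $c_1<c^*$ and $\eta_1<1$ were arbitrary, the conclusion follows. This iteration is the actual content; it cannot be replaced by citing Theorem~\ref{thm:spreading}.

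For the second assertion your supersolution idea is close to the paper's, with two corrections. First, you need a preliminary step: compare $u$ with the flat ODE solution to ensure $\|u(\cdot,t)\|_\infty<1+\delta$ for large $t$, since otherwise $f_0U_{c^*}<1+\delta$ cannot dominate an initial datum with $\|u_0\|_\infty$ large. Second, the paper uses \emph{planar} supersolutions $w(x,t;\nu)=f(t)U_{c^*}(x\cdot\nu-g(t))$ for each unit vector $\nu$, which are genuine one-dimensional supersolutions of~\eqref{eq:main} in $\mathbb{R}^N$ with no singularity to manage at the origin; intersecting over $\nu$ traps the support of $u$. Your radial construction also works but requires the extra care at $r=0$ that you flagged.
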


\begin{proof}

The first assertion comes from Lemma~\ref{lema_propagation}, arguing as in the proof of~\cite[Lemma 5.3]{Aronson-Weinberger-1978}.

As for the second assertion, the first step is to prove, by comparison with a flat (constant in space) solution, that $\|u(\cdot,t)\|_{L^\infty(\mathbb{R}^N)}<1+\delta$ for any $\delta>0$ if $t$ is large enough. The result then follows by comparison with one-dimensional  supersolutions of the form $w(x,t;\nu)=f(t)U_{c^*}(x\cdot\nu-g(t))$, where $\nu$ is an arbitrary unitary vector; see Subsection~\ref{subsect-one.dimensional}. These supersolutions travel  asymptotically with speed $c^*$ and have a finite front, hence the result.
\end{proof}
\section{Convergence}
\label{sect-convergence.compact.support} \setcounter{equation}{0}

The results in the previous section show that the asymptotic spreading speed is $c^*$. However, they only give a very rough  estimate  on the asymptotic location of the level sets, and none on the asymptotic form of the solution close to the free boundary. This section is devoted to these issues. On the rest of the article we will only consider the case $p\geq 2$,  together with the usual  slow-regime condition~\eqref{eq:slow.regime}.

\subsection{Logarithmic shift}
We start this subsection showing that solutions approach 1 exponentially in certain expanding sets.

\begin{Lemma}\label{lemma:exponential-convergence-to-1}
Let $u$ be a spreading solution of equation~\eqref{eq:main} with a bounded and compactly supported initial datum. There exist $\hat c\in(0, c^*)$, $k>0$ and $M,T_*>0$
such that
\begin{eqnarray}
\label{u<} u(x,t)\leq 1+M {\rm e}^{-k t},&\qquad &x\in\mathbb{R}^N,\ t\geq 0,\\
\label{u>} u(x,t)\ge 1-M {\rm e}^{-k t},&\qquad &|x|\le \hat ct,\ t\ge T_*.
\end{eqnarray}
\end{Lemma}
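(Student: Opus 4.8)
The plan is to prove the two inequalities separately, the upper bound \eqref{u<} being essentially soft and the lower bound \eqref{u>} requiring an explicit subsolution adapted to the $p$-Laplacian structure.

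For \eqref{u<}, the idea is comparison with a spatially homogeneous solution. Since $u_0$ is bounded, let $M_0:=\max\{1,\|u_0\|_\infty\}$ and let $\bar u(t)$ solve the ODE $\bar u'=h(\bar u)$, $\bar u(0)=M_0$. By \eqref{eq:reaction} we have $h<0$ on $(1,\infty)$, so $\bar u$ decreases monotonically to $1$; by the comparison principle $u(x,t)\le\bar u(t)$ for all $(x,t)\in Q$. Using \eqref{eq:reaction_deriv_in_1}, i.e.\ $h'(1)<0$, a linearization of the ODE near the equilibrium $1$ gives $\bar u(t)-1\le M{\rm e}^{-kt}$ for suitable $M>0$ and $0<k<|h'(1)|$. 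This yields \eqref{u<}.

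For \eqref{u>}, I would argue in two stages. First, since $u$ spreads, Theorem~\ref{thm:spreading}(ii) gives $\min_{|y-y_0|\le ct}u(y,t)\to 1$ as $t\to\infty$ for every $c\in(0,c^*)$; fix $\hat c\in(0,c^*)$. Hence, given the $\delta>0$ coming from condition \eqref{eq:reaction_deriv_in_1} (so that $h'<0$ on $(1-\delta,1+\delta)$), there is a time $T_*$ such that $u(x,T_*)\ge 1-\delta/2$ on a ball $B_{R}$ with $R$ as large as we please relative to $\hat c T_*$. The second stage is to build an explicit subsolution of the form $\underline u(x,t)=1-\psi(|x|-\hat c t)$, where $\psi\ge 0$ is small and decaying; in fact I expect to take something like $\psi(\xi)=A\,{\rm e}^{-\lambda(\xi+\xi_*)}$ (truncated at $1$, and with the right sign of $\xi$) so that $1-\underline u$ decays exponentially in the moving frame $\xi=|x|-\hat c t$. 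Plugging this ansatz into $\mathcal{L}$, using that near $u=1$ we have $h(u)\approx h'(1)(u-1)$ and that $\Delta_p\underline u^m$ produces terms of order $|\nabla\underline u|^{p-2}$ (hence subcritical, since $|\nabla\underline u|$ is small), one checks that for $\lambda$ small and $A$ small the dominant balance is between the convective/transport term $\hat c\,\partial_\xi$ and the reaction term $h'(1)(\underline u-1)$; choosing $\lambda<|h'(1)|/\hat c$ and then $A$ small makes $\mathcal{L}\underline u\le 0$ in the region $\{|x|\le \hat c t\}$ for $t\ge T_*$, while the lower-order diffusion and curvature terms $\tfrac{N-1}{|x|}|\nabla\underline u^m|^{p-2}\nabla u^m$ are absorbed because they carry an extra factor of $|\nabla\underline u|^{p-1}$ which is exponentially small. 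One then fixes $A$ and $\xi_*$ (shifting the profile) so that $\underline u(x,T_*)\le u(x,T_*)$: on $B_R$ this holds because $u(\cdot,T_*)\ge 1-\delta/2$ and $\psi$ can be made $\le\delta/2$ there by taking $\xi_*$ large, and outside $B_R$ it holds because $\underline u\le 0\le u$ once $R$ is chosen past the support of the truncation. The comparison principle for $\mathcal{L}$ then gives $u(x,t)\ge\underline u(x,t)\ge 1-A\,{\rm e}^{-\lambda(|x|-\hat c t+\xi_*)}\ge 1-M{\rm e}^{-kt}$ on $\{|x|\le\hat c t\}$, with $k=\lambda\hat c$ and $M=A\,{\rm e}^{-\lambda\xi_*}\cdot$const, which is \eqref{u>}.

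The main obstacle I anticipate is the subsolution construction for $p>2$: unlike the semilinear case one cannot linearize the operator, and one must verify by hand that the genuinely nonlinear diffusion term $\Delta_p\underline u^m$ — after the substitution $\underline u=1-\psi$ and differentiation of $\underline u^m$ — together with the curvature term stays of lower order than the $O(\psi)$ reaction contribution uniformly in the relevant region. This is precisely where the exponential (rather than merely power-like) decay of $\psi$ is used, and where the restriction $p\ge 2$ enters: the factor $|\nabla\underline u^m|^{p-2}$ is harmless only when $p-2\ge 0$ so that it is itself small, and the required sign of the leading terms must be tracked carefully through the chain rule for $(1-\psi)^m$. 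The matching of initial data at $t=T_*$ is then routine given the spreading estimate.
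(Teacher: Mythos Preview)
Your argument for \eqref{u<} is correct and is exactly what the paper does.

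For \eqref{u>} there is a genuine gap in the final step. With any travelling-wave ansatz $\underline u(x,t)=1-\psi(|x|-\hat c t)$ moving at the \emph{same} speed $\hat c$ as the region on which you want the estimate, the value of $\underline u$ at the moving boundary $|x|=\hat c t$ is $1-\psi(0)$, a fixed constant strictly below $1$. Hence the bound $\underline u(x,t)\ge 1-M{\rm e}^{-kt}$ cannot hold uniformly on $\{|x|\le\hat c t\}$, whatever the sign of the exponent in $\psi$. Concretely, with $\psi(\xi)=A{\rm e}^{-\lambda(\xi+\xi_*)}$ as you write it, on $\{|x|\le\hat c t\}$ the quantity $A{\rm e}^{-\lambda(|x|-\hat c t+\xi_*)}$ is bounded \emph{below} by $A{\rm e}^{-\lambda\xi_*}$, so your last inequality is false for large $t$. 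One might try to repair this by letting the subsolution travel at a faster speed $c'\in(\hat c,c^*)$ and drawing the conclusion only on $\{|x|\le\hat c t\}$, but then in dimension $N>1$ the profile $1-\psi(|x|-c't)$ has a corner at the origin (since $\psi'(-c't)\neq 0$), the curvature term $(N-1)/r$ is uncontrolled there, and some truncation or inner boundary matching near $r=0$ is required; the argument becomes considerably more delicate than you indicate.

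The paper takes a different route that sidesteps both issues. For $p>2$ it uses a \emph{product} subsolution $v(x,t;T)=\varphi(|x|)\big(1-\varepsilon{\rm e}^{-\lambda(t-T)}\big)$ on the fixed ball $\{|x|\le \tilde cT\}$, with $\varphi\equiv 1$ on $\{|x|\le\tilde cT/2\}$ and $\varphi(r)=1-\varepsilon(2/\tilde cT)^n(r-\tilde cT/2)^n$ on the outer annulus. The exponential approach to $1$ comes directly from the time factor, so there is no speed-matching obstruction; $\varphi$ is constant near $0$, so there is no singularity at the origin; and the key computation shows that $\Delta_p\varphi^m$ is dominated by the reaction term provided $n>\max\{2,\,p/(p-2)\}$ and $T$ is large (this is precisely where $p>2$ enters). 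Comparison on $[T,2T]$ then yields $u(x,2T)\ge 1-\varepsilon{\rm e}^{-\lambda T}$ for $|x|\le\tilde cT/2$, and varying $T$ gives \eqref{u>} with $\hat c=\tilde c/4$. For $p=2$ the paper invokes instead a linearization argument from earlier work.
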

\begin{proof}
 Assertion \eqref{u<} can be easily proved by comparison with a flat  ODE solution thanks to the fact that $h'(1)<0$. Assertion \eqref{u>} is a bit more complicated.

When $p=2$ (which implies that $m>1$ since $m(p-1)>1)$) we can use an  argument similar to the one in~\cite[Lemma 3.2]{Du-Quiros-Zhou-Preprint}, recalling that $h(u)\sim |h'(1)|(1-u)$ when $u\sim 1$.  The argument involves certain linearization that does not work for $p\neq2$.

For $p>2$ we consider the radial comparison function
$$
v(x,t; T):=  \varphi(|x|)\left(1-  \varepsilon e^{-\lambda(t-T)}\right),\quad \varphi(|x|):= \begin{cases}
            1,\quad &0\leq |x|\leq \frac{\tilde{c}T}{2},
             \\
             1-\varepsilon \left(\frac{2}{\tilde{c}T}\right)^n\left(|x| - \frac{\tilde{c}T}{2}\right)^n,\quad & \frac{\tilde{c}T}{2}< |x| \leq \tilde{c}T,
             \end{cases}
$$
for some speed $\tilde{c}\in (0,c^*)$, and certain positive constants $T,\lambda,n,\varepsilon$ that will be chosen so that $v$ is a subsolution of the problem in the ball of radius $R_T:=\tilde{c}T$ for all $t> T$. We omit most of the dependencies of $v$ to simplify the  notation. Observe that $\varphi\in C^2(B_{R_T})$ if $n\ge 2$.

Notice that $1>v(x,t)\ge (1-\varepsilon)^2$ if $|x|\le R_T$, $t> T$. Therefore, since $h(v)\sim (1-v)|h'(1)|$ when $v\sim 1$, there exists $\varepsilon_0\in(0,1)$ such that for all $\varepsilon\in(0,\varepsilon_0)$ we have, denoting $r:=|x|$,
$$
h(v)\ge\frac{|h'(1)|}{2} (1-v)=\frac{|h'(1)|}{2}\left[\varepsilon \left(\frac{2}{R_T}\right)^n\left(r - \frac{R_T}{2}\right)_+^n + \varphi(r)\varepsilon {\rm e}^{-\lambda(t-T)}\right].
$$
Hence, since $v_t = \varphi(r)\varepsilon \lambda {\rm e}^{-\lambda(t-T)}$, it is trivial to have $\mathcal{L}v\leq 0$  if $r\le R_T/2$ and $t>T$, simply by choosing $\lambda\leq |h'(1)|/2$.

For $r\in(R_T/2,R_T)$ we have
$$
\begin{array}{l}
\frac{N-1}{r}\big(|(\varphi^m)'|^{p-2}(\varphi^m)'\big)(r) =-\frac{N-1}{r}\left[m(\varphi(r))^{m-1}n \varepsilon \left(\frac{2}{R_T} \right)^n \left(r-\frac{R_T}{2} \right)^{n-1}\right]^{p-1},\\[10pt]
\big(|(\varphi^m)'|^{p-2}(\varphi^m)'\big)'(r)=-(p-1)\left[m(\varphi(r))^{m-1}n \varepsilon \left(\frac{2}{R_T} \right)^n \left(r-\frac{R_T}{2} \right)^{n-1}\right]^{p-2}\times\\[10pt]
\times \left[m(m-1)(\varphi(r))^{m-2}n^2 \varepsilon^2 \left(\frac{2}{R_T} \right)^{2n} \left(r-\frac{R_T}{2} \right)^{2n-2} + m(\varphi(r))^{m-1}n(n-1) \varepsilon \left(\frac{2}{R_T} \right)^{n} \left(r-\frac{R_T}{2} \right)^{n-2} \right].
\end{array}
$$
which means that there are positive constants $C_1, C_2$, which may depend on $n$, but not on $R_T$, such that
$$
\begin{array}{l}
\frac{N-1}{r}\big(|(\varphi^m)'|^{p-2}(\varphi^m)'\big)(r)  \geq -C_1 R_T^{-(1+n(p-1))}\left(r-\frac{R_T}{2} \right)^{(n-1)(p-1)},\\[8pt]
\big(|(\varphi^m)'|^{p-2}(\varphi^m)'\big)'(r)\geq-C_2R_T^{-n(p-1)}\left(r-\frac{R_T}{2} \right)^{n-2+(n-1)(p-2)}.
\end{array}
$$
Therefore, if $n > \max\{ 2, p/(p-2) \}$  (it is at this point where we need $p>2$), the reaction term is able to take care of both the time derivative term and the terms coming from the doubly nonlinear operator, by taking  $T\ge T_1$ for some large $T_1$, so that $R_T$ is as big as needed. Thus, $\mathcal{L}v\leq 0$ also for $R_T/2\le |x|<R_T$, $t\ge T$, and $v$ is a subsolution, as desired.

We still have to check that $v$ is below $u$ at the parabolic boundary. This comes from Theorem~\ref{thm:spreading}. Indeed, given $\varepsilon\in (\varepsilon_0,1)$, there exists a time $T_2\ge T_1$ such that if $T\ge T_2$, then
$$
\begin{array}{ll}
u(x,T)\geq 1-\varepsilon\geq v(x, T; T) \quad& \text{for all }|x|\leq R_T,\\[8pt]
u(x, t)\geq 1-\varepsilon\geq v(x, t; T) \quad &\text{for all }|x|= R_T,\ t\geq T.
\end{array}
$$
Comparison then implies $u(x,t)\ge v(x,t;T)$ if $|x|\le R_T$, $t\ge T\ge T_2$. In particular,
$$
u(x,2T)\geq v(x,2T; T) = 1-\varepsilon {\rm e}^{-\frac{\lambda}{2}\cdot 2T} \quad\text{for all }|x|\leq 2T\left(\frac{\tilde{c}}{4}\right),\ T\geq T_2,
$$
and~\eqref{u>} follows, taking
$\hat c= \tilde{c}/4$, $k=\lambda/2, M=\varepsilon$ and $T_*=2T_2$.
\end{proof}

Next, a lemma that will allow us to establish comparison in the range $r\geq c^*t - M \log t$, for a certain $M>0$, in the future. The proof is very similar to the one of~\cite[Lemma 3.3]{Du-Quiros-Zhou-Preprint}, so we will  only explain the major differences.

\begin{Lemma}
\label{lem:first.log.correction}
There exist positive constants $T>0$ and $M>0$ such that
$$
c^* -M \log(t)  \leq \eta(t)\quad \text{for } t\geq T.
$$
\end{Lemma}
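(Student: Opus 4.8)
\emph{Plan.} Following~\cite[Lemma 3.3]{Du-Quiros-Zhou-Preprint}, I would place below $u$ a radial weak subsolution of~\eqref{eq:radial.equation} whose free boundary advances asymptotically at speed $c^*$, built from a finite wavefront $U(\cdot;\gamma)$ of the convection equation~\eqref{eq:constant.convection} with a convection parameter $\gamma$ let to decay like $1/t$. The role of the convection term is to neutralise the curvature term $\frac{N-1}{r}|(u^m)_r|^{p-2}(u^m)_r$ of~\eqref{eq:radial.equation}, which is nonpositive near the front: if $\gamma(t)\ge\frac{N-1}{r}$ on the support of the subsolution, the two combine with the right sign. Taking $\gamma(t):=\frac{N-1}{\hat c\,t}$, with $\hat c\in(0,c^*)$ the speed from Lemma~\ref{lemma:exponential-convergence-to-1}, achieves this once the front of the subsolution, at some $\rho(t)$, satisfies $\rho(t)\ge\hat c\,t$, which will hold for large $t$ since $\rho(t)\sim c^*t$. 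Because $c(\gamma)=c^*+c'(0)\gamma+O(\gamma^2)$ with $c'(0)<0$ (Lemma~\ref{lemma:c_differentiable}), the front law $\rho'(t)\approx c(\gamma(t))$ with $\gamma(t)\asymp 1/t$ integrates to $\rho(t)=c^*t-M\log t+O(1)$, producing the logarithmic correction.

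\emph{The comparison function.} I would set
\[
w(r,t):=\big(1-\epsilon(t)\big)\,U\big(r-\rho(t);\gamma(t)\big),\qquad \epsilon(t):=M_0{\rm e}^{-kt},
\]
with $\gamma(t)$ as above (decreasing to $0$, in $[0,\gamma_*)$ for $t$ large), $M_0,k$ the constants of Lemma~\ref{lemma:exponential-convergence-to-1}, and $\rho$ the solution of $\rho'(t)=(1-\epsilon(t))^{m(p-1)-1}c(\gamma(t))$ (slightly below $c(\gamma(t))$, as $m(p-1)>1$) with $\rho(T)$ taken so small that $w(\cdot,T)$ is supported inside $\{|x|\le\hat cT\}$. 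Inserting $w$ into~\eqref{eq:radial.equation} and using that $U(\cdot;\gamma(t))$ solves the convection travelling-wave equation with speed $c(\gamma(t))$, the main terms cancel and there remain: the term $[(1-\epsilon)^{m(p-1)}c(\gamma(t))-(1-\epsilon)\rho'(t)]\,U'$, which vanishes by the choice of $\rho'$; the curvature-plus-convection contribution $(1-\epsilon)^{m(p-1)}\big(\gamma(t)-\frac{N-1}{r}\big)|(U^m)'|^{p-2}(U^m)'\le 0$, of size $\asymp t^{-1}$ times the flux on the support; the reaction mismatch $(1-\epsilon)^{m(p-1)}h(U)-h((1-\epsilon)U)$ and the term $-\epsilon'(t)U$, both $O(\epsilon)=O({\rm e}^{-kt})$; and a term carrying $\gamma'(t)\,\partial_\gamma U$, of size $O(t^{-2})$ wherever $\partial_\gamma U$ is bounded.

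\emph{Verification and conclusion.} For $t$ large the $\asymp t^{-1}$ negative room dominates the $O({\rm e}^{-kt})$ and $O(t^{-2})$ errors, so $\mathcal Lw\le 0$; here one uses the flux bound of Theorem~\ref{thm:bound.flux} together with the estimate $|\partial_\gamma V|\le C$ for the wavefront pressure (Section~\ref{subsect-dependence.convection}) to keep all quantities — including at the free boundary — under control. Since $U(\cdot;\gamma)$ is a \emph{finite} wavefront, $w$ and its flux vanish continuously at $r=\rho(t)$, so $w$ is indeed a weak subsolution on $\{r\ge\hat ct,\ t\ge T\}$. On the parabolic boundary of this region $w\le u$: trivially at $t=T$ by the choice of $\rho(T)$, and on $r=\hat ct$ because there $w\le 1-\epsilon(t)=1-M_0{\rm e}^{-kt}\le u(x,t)$ by Lemma~\ref{lemma:exponential-convergence-to-1}. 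The comparison principle then gives $u\ge w$ in that region, hence $\eta(t)\ge\rho(t)$. Finally $\rho'(t)=c(\gamma(t))+O({\rm e}^{-kt})=c^*+c'(0)\frac{N-1}{\hat c\,t}+O(t^{-2})+O({\rm e}^{-kt})$, so integrating gives $\rho(t)=c^*t-M\log t+O(1)$, which yields the claim for a suitable $M>0$ and all $t\ge T$ after enlarging $T$ if necessary.

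\emph{Main obstacle.} The principal new difficulty compared with the porous-medium case of~\cite{Du-Quiros-Zhou-Preprint} is the control of $w$ near its own free boundary when $p\neq 2$: there the operator is degenerate also where $\nabla w$ vanishes, the profile $U(\cdot;\gamma)$ need not be $C^1$ up to the front, and the time derivative produced by the moving convection parameter, $\gamma'(t)\partial_\gamma U$, may blow up there. This is why the proof must be organised so that this bad term is absent or controlled — e.g.\ by taking $\gamma$ piecewise constant on a suitable increasing sequence of time intervals (so $\gamma'\equiv0$), at the price of re-establishing the comparison at the finitely many switching times, or by carrying out the whole argument at the level of the pressure, whose gradient and $\gamma$-derivative stay bounded up to the free boundary by Theorem~\ref{thm:bound.flux} and the last lemma of Section~\ref{subsect-dependence.convection}. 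The remaining work is to check, quantitatively and uniformly over the compact range of the parameters involved, that the $t^{-1}$-sized slack from the convection/curvature balance really does absorb the $O({\rm e}^{-kt})$ and $O(t^{-2})$ errors.
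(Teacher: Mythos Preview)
Your approach differs substantially from the paper's and is more elaborate than needed here. The paper builds its subsolution from the \emph{non-convection} wavefront $U_{c^*}$, with an explicit logarithmic shift inserted directly in the argument:
\[
w(x,t)=f(t)\,U_{c^*}\big(|x|-g(t)+M\log t\big),
\]
where $(f,g)$ is the ODE pair of Subsection~\ref{subsect-one.dimensional}. Writing $\mathcal Lw=\mathcal A+\mathcal B$, the block $\mathcal A$ is exactly the one-dimensional subsolution computation already handled in Lemma~\ref{lema2}, while on $\{r\ge\hat ct\}$
\[
\mathcal B=fU'_{c^*}\,\frac Mt+f^{m(p-1)}\,\frac{N-1}{r}\,|(U^m_{c^*})'|^{p-1}
\le \frac{f|U'_{c^*}|}{t}\Big(\frac{(N-1)f^{m(p-1)-1}(mU^{m-1}_{c^*})^{p-1}|U'_{c^*}|^{p-2}}{\hat c}-M\Big).
\]
Since $U_{c^*}^{(m-1)(p-1)}|U'_{c^*}|^{p-2}$ is bounded on $[0,1]$ (it tends to $0$ at both endpoints), taking $M$ large forces $\mathcal B\le 0$. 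That is the entire argument: no convection wavefronts, no $\partial_\gamma U$, no input from Section~\ref{subsect-dependence.convection}.

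Your construction --- a time-dependent convection parameter $\gamma(t)\asymp t^{-1}$ so that the curvature term is absorbed into the wavefront equation itself --- is precisely the mechanism the paper reserves for the \emph{next}, sharper theorem (the one identifying the constant $(N-1)c_\#$). Using it already here is not wrong in spirit, but your error analysis has a gap: you claim the ``$\asymp t^{-1}$ negative room'' from $(\gamma(t)-\frac{N-1}{r})\,|(U^m)'|^{p-2}(U^m)'$ dominates the $O(e^{-kt})$ and $O(t^{-2})$ errors, yet this room is proportional both to the flux and to $\gamma(t)-\frac{N-1}{r}$, and each factor vanishes on part of the domain (the flux as $U\to 1$, the bracket at $r=\hat ct$). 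Near the inner boundary the slack is therefore zero while the positive term $-\epsilon'(t)U=k\epsilon U$ survives. One can repair this by checking that there the reaction mismatch $(1-\epsilon)^{m(p-1)}h(U)-h((1-\epsilon)U)\approx\epsilon Uh'(1)<0$ absorbs it --- which requires $k<|h'(1)|$, a constraint the proof of Lemma~\ref{lemma:exponential-convergence-to-1} does in fact deliver --- but this case-splitting near $U\sim 1$ versus $U$ bounded away from $1$ is exactly the delicate balance the paper carries out only for the sharper theorem, and is the reason it uses $f(t)=1-\log(t-t_0)/(t-t_0)^2$ there rather than an exponential. For the present preliminary lemma the paper's route is both shorter and sidesteps the ``main obstacle'' you correctly flag.
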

\begin{proof}

Following ideas from~\cite[Lemma 3.3]{Du-Quiros-Zhou-Preprint} we look for a subsolution of the form
$$
w(x,t)=f(t)U_{c^*}(|x|-g(t) + M\log t)
$$
for a certain positive $M$, where $f$ and $g$ are chosen as in Subsection~\ref{subsect-one.dimensional}.
We have to check that $\mathcal{L}w\leq 0$ in the range of comparison $r>\hat{c}t$, where this $\hat{c}$ is the one from the previous lemma. In order to see this we compute
$$
\begin{aligned}
\mathcal{L}w &=  f'U_{c^*} - fg'U'_{c^*}  +  fU'\frac{M}{t} + f^{m(p-1)}(|(U_{c^*}^m)'|^{p-1})' + f^{m(p-1)}\frac{N-1}{r}|(U_{c^*}^m)'|^{p-1}  - h(fU_{c^*})\\ \nonumber
& \leq\underbrace{ \varphi U_{c^*} + k \varphi U'_{c^*} + f^{m(p-1)}h(U_{c^*}) - h(fU_{c^*})}_{\mathcal{A}} + \underbrace{fU'_{c^*}\frac{M}{t} + f^{m(p-1)}\frac{N-1}{r}|(U_{c^*}^m)'|^{p-1} }_{\mathcal{B}}.
\end{aligned}
$$
Now the term $\mathcal{A}$ can be handled by choosing the correct function $\varphi$ and the correct value of $k$ as in~\cite{Garriz-2020} to prove that $\mathcal{A}\leq 0$. On the other hand,
$$
\mathcal{B}\leq \frac{f|U'_{c^*}|}{t}\left(  \frac{(N-1)f^{m(p-1)-1} (mU^{m-1}_{c^*})^{p-1}|U'_{c^*}|^{p-2} }{\hat{c}} - M \right),
$$
and since $U^{(m-1)(p-1)}_{c^*}|U'_{c^*}|^{p-2} \to 0$ as $U_{c^*}\to 1$ and as $U_{c^*}\to 0$ (we recall~\eqref{profile:property}) it is clear that we can find a big enough $M$ such that $\mathcal{B}\leq 0$. The rest of the result follows as in~\cite[Lemma 3.3]{Du-Quiros-Zhou-Preprint}.
\end{proof}

\noindent\emph{Remark.  } There is a minor mistake in the computation of $\mathcal{L}w $ in the proof of~\cite[Lemma 3.3]{Du-Quiros-Zhou-Preprint}. The term \lq\lq $-Mg(t)\textrm{e}^{-\delta t}\Phi_{ c_*}$'', which is exploited there to show that $\mathcal{L}w \leq 0$, does not exist.  Nonetheless, the result is true, as we have proved above.

\medskip

\begin{Theorem}
There exist positive constants $T$ and $C$ such that
$$
c^* - (N-1)c_{\sharp} \log(t) - C \leq \eta(t) \leq c^* - (N-1)c_{\sharp} \log(t) + C\quad \text{for }t\geq T.
$$
\end{Theorem}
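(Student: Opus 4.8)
The plan is to derive the two inequalities from comparison with, respectively, a supersolution and a subsolution of~\eqref{eq:radial.equation} built from the finite wavefronts $U(\cdot;\gamma)$ of the constant-convection equation~\eqref{eq:constant.convection}, with the convection parameter chosen, at each time, so as to match the coefficient $(N-1)/r$ near the free boundary. The quantitative inputs are: the expansion $c(\gamma)=c^*+c'(0)\gamma+O(\gamma^2)$ as $\gamma\to0^+$, together with $c'(\gamma)\in(-m,0)$, the identity $c_\#=-c'(0)/c^*$, the $C^2$-regularity of $\gamma\mapsto c(\gamma)$ and the boundedness of $\partial_\gamma V(\cdot;\gamma)$ [Lemma~\ref{lemma:c_differentiable} and the last lemma of Subsection~\ref{subsect-dependence.convection}]; the flux bound of Theorem~\ref{thm:bound.flux}, valid uniformly in $\gamma$ for~\eqref{eq:constant.convection} [remark after Corollary~\ref{cor:bound.flux}]; and the preliminary estimates $\eta(t)\ge c^*t-M\log t$ for $t\ge T$ [Lemma~\ref{lem:first.log.correction}], $u\le 1+M{\rm e}^{-kt}$ everywhere and $u\ge 1-M{\rm e}^{-kt}$ on $\{|x|\le\hat ct\}$ [Lemma~\ref{lemma:exponential-convergence-to-1}], plus the rough upper bound $\eta(t)\le c^*t+C_0$ coming from comparison with $U_{c^*}$ — note that for a radial supersolution of this form the extra radial term $-\tfrac{N-1}{r}|(u^m)_r|^{p-1}$ can only help, so no convection correction is needed there. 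The target is $\eta(t)=c^*t-(N-1)c_\#\log t+O(1)$.

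For the upper bound one takes $\overline w(x,t)=\overline f(t)\,U(|x|-\overline g(t);\overline\gamma(t))$ with $\overline\gamma(t)=(N-1)/(c^*t)$, $\overline f$ a solution of the ODE in~\eqref{system:sub_super} decreasing to $1$ from $\overline f(T)\in(1,1+\delta)$ (admissible for $T$ large since $\|u(\cdot,T)\|_\infty<1+\delta$), $\overline g$ solving $\overline g'=\overline f^{(p-1)m-1}c(\overline\gamma(t))-k\varphi(\overline f)/\overline f$, and $\overline g(T)\ge\eta(T)$ large enough that $\overline w(\cdot,T)\ge u(\cdot,T)$. Since $c$ is decreasing, $\overline g(t)<c^*t$ for all $t$ past some finite $T_2=\kappa T$; hence for $t\ge T_2$ the support of $\overline w(\cdot,t)$ lies in $\{r<c^*t\}$, where $(N-1)/r>\overline\gamma(t)$, so the discrepancy between the radial term and the constant convection has the right sign for a supersolution. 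On the compact interval $[T,T_2]$ one adds a speed boost $+C/t$ to $\overline g'$, which creates a nonnegative reserve of order $\overline f(C/t)|U'|$ in $\mathcal{L}\overline w$ that, for $C$ large, dominates the (at most $O(1/t)$) adverse contribution of the discrepancy in the thin zone $r\in(c^*t,\overline g(t))$, while only shifting $\overline g$ by $\int_T^{T_2}(C/s)\,{\rm d}s=O(1)$. The remaining terms (from $\overline f\not\equiv1$, from $\overline\gamma'(t)=O(t^{-2})$ and the bound on $\partial_\gamma U$, and from the reaction via $h'(1)<0$) are controlled as in Subsection~\ref{subsect-one.dimensional}. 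Comparison then gives $u\le\overline w$, so $\eta(t)\le\overline g(t)$; and since $c(\overline\gamma(s))=c^*-(N-1)c_\#/s+O(s^{-2})$ and $\int^\infty s^{-2}\,{\rm d}s<\infty$, one obtains $\overline g(t)=c^*t-(N-1)c_\#\log t+O(1)$.

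For the lower bound one argues in two stages. In the first, $w_1(x,t)=f_1(t)U(|x|-g_1(t);(N-1)/(\hat ct))$, with $g_1'=c^*-M_1/t$ and $M_1>\max\{M,\ c^*c_\#(N-1)/\hat c\}$, is checked to be a subsolution in the cone $\{|x|\ge\hat ct,\ t\ge T\}$: there $(N-1)/r\le(N-1)/(\hat ct)$, so the convection discrepancy has the favourable sign, and the lower bound on $M_1$ makes $f_1^{(p-1)m}c((N-1)/(\hat ct))-f_1g_1'$ a positive quantity of size $O(1/t)$, providing a reserve that beats $f_1'U\sim\varphi(f_1)U$ and the $O(t^{-2})$ terms; the inner boundary is handled by $u\ge 1-M{\rm e}^{-kt}\ge w_1$ (choosing $\varphi$ with $|\varphi'(1)|$ small, so $f_1\to1$ more slowly than $1-M{\rm e}^{-kt}$) and $g_1(T)\le\eta(T)$. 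This yields $u\ge w_1$, hence $u\to1$ uniformly on $\{|x|\le c^*t-M_1'\log t\}$ with $M_1':=M_1+1$. In the second stage, $w_2(x,t)=f_2(t)U(|x|-g_2(t);(N-1)/(c^*t-M_1'\log t))$, with $f_2\nearrow1$ and $g_2$ solving $g_2'=f_2^{(p-1)m-1}c((N-1)/(c^*t-M_1'\log t))-k\varphi(f_2)/f_2$, $g_2(T)\le\eta(T)$, is a subsolution in $\{|x|\ge c^*t-M_1'\log t,\ t\ge T\}$: in that region $(N-1)/r$ is at most the chosen convection parameter, so there is no adverse zone at all; the inner boundary is controlled by Stage~1. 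Comparison gives $u\ge w_2$, so $\eta(t)\ge g_2(t)$; and since the chosen convection parameter equals $(N-1)/(c^*t)+O(\log t/t^2)$, one finds $g_2(t)=c^*t-(N-1)c_\#\log t+O(1)$, which together with the upper bound proves the theorem.

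The main obstacle is the lower bound. A one-stage argument with the full wavefront and convection parameter $(N-1)/(c^*t)$ fails, because far behind the front $(N-1)/r$ exceeds $(N-1)/(c^*t)$, creating a term with the wrong sign for a subsolution that, though small, is still too large to be absorbed by the only reserve available that far back (the exponentially small one coming from $h'(1)<0$). The two-stage scheme circumvents this: Stage~1 controls $u$ directly in the far interior — and can afford a non-sharp logarithmic coefficient, since it only has to reach the coarse cone $\{|x|\ge\hat ct\}$ — so that in Stage~2 the comparison need be performed only on a region where the chosen convection parameter, which equals $(N-1)/(c^*t)$ up to lower-order $\log t$ terms, dominates $(N-1)/r$ everywhere; this keeps the logarithmic coefficient sharp and the error in the front position $O(1)$ rather than $o(\log t)$. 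The other technical point, needed so that the time dependence of the convection parameter is harmless, is the uniform control as $\gamma\to0^+$ of the profile $U(\cdot;\gamma)$, of $\partial_\gamma U$, and of the flux.
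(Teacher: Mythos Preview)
Your strategy—building sub- and supersolutions from the convection wavefronts $U(\cdot;\gamma)$ with a time-dependent $\gamma(t)$ tuned to match $(N-1)/r$ near the front—is exactly the paper's. The paper, however, does the lower bound in a single stage: since Lemma~\ref{lem:first.log.correction} is already in hand, it simply takes $\{r\ge c^*t-M\log t\}$ as the comparison region, uses the fact that there $(N-1)/r-\gamma=O(\log t/t^2)$, and handles the inner boundary via the referenced argument in~\cite{Du-Quiros-Zhou-Preprint}. Your two-stage scheme is a legitimate alternative way to organise that, but it is not where the real difficulty lies.

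The substantive gap is in your choice of $f$ and $g$. You take $f$ (and, in the upper bound and in Stage~2, also $g$) from the one-dimensional system~\eqref{system:sub_super}. That system forces $|f-1|$, $|\varphi(f)|$ to decay \emph{exponentially}, so the only reserves you create in $\mathcal{L}w$—namely $k\varphi(f)U'$ and the reaction remainder $f^{m(p-1)}h(U)-h(fU)\approx\pm|h'(1)|\,|f-1|$—are of size $O({\rm e}^{-ct})$. But once $\gamma=\gamma(t)$ depends on time, $\mathcal{L}w$ picks up the extra term $fU_\gamma\gamma'(t)$, which is $O(t^{-2})$ with a sign that is \emph{not} uniformly favourable along the profile (the sign of $U_\gamma$ changes between the front and the tail, and its behaviour near $U=1$ depends on $p$). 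The discrepancy term $\big[\tfrac{N-1}{r}-\gamma(t)\big]\,|(U^m)'|^{p-1}$, although it has the right sign by construction, vanishes near $U=1$ (since $|(U^m)'|^{p-1}\sim(1-U)^{p-1}$) and cannot absorb $O(t^{-2})$ there; your exponential reserves cannot either. The sentence ``controlled as in Subsection~\ref{subsect-one.dimensional}'' does not cover this, because in that subsection $\gamma\equiv0$ and no such term appears.

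The paper's cure is precisely to abandon~\eqref{system:sub_super} at this step and take instead the \emph{algebraic} choice $f(t)=1\pm\dfrac{\log(t-t_0)}{(t-t_0)^2}$ together with an extra $\pm\dfrac{\lambda\log(t-t_0)}{(t-t_0)^2}$ in $g'$. Then $|f-1|\sim\log t/t^2$, so near $U=1$ the reaction remainder already gives a reserve of size $|h'(1)|\log t/t^{2}$; and away from $U=1$ the $\lambda\log t/t^2$ correction in $g'$ produces a reserve $\lambda\tfrac{\log t}{t^2}\,|(U^{m-\alpha})'|$ with $|(U^{m-\alpha})'|$ bounded below. These two together dominate the $O(t^{-2})$ contribution of $U_\gamma\gamma'$ uniformly in $\xi$, which is the content of the paper's inequality~\eqref{eq:ineq_shift}. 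With this modification your scheme goes through; without it, it does not.
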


\begin{proof}\label{Sketch of the proof}
Let us focus on the left-hand side of the inequality, since the right-hand one comes similarly. As in~\cite[Lemma 3.4]{Du-Quiros-Zhou-Preprint} we define a subsolution by
$$
w(r,t):= f(t)U\left( |x|-g(t)  + C  ; \gamma\left(c^*- \frac{(N-1)c_\sharp}{t}\right) \right),
$$
where $\gamma(c)$ is the inverse of the function $c(\gamma)$ (which is a well defined $C^2$ function for $\gamma\in[0,\gamma_*)$, thanks to the results of Subsection~\ref{subsect-dependence.convection}), and $t_0$ and $C$ are suitably chosen constants; see~\cite{Du-Quiros-Zhou-Preprint} for the details.  Here we take
$$
f(t):= 1-\frac{\log (t-t_0)}{(t-t_0)^2}
$$
and $g(t)$ as the solution of the ODE
$$
g'(t):=f^{m(p-1)-1}c^*  + (1-f^{m(p-1)-1})\frac{(N-1)c_\sharp}{t} - \frac{\lambda\log(t-t_0)}{(t-t_0)^2} + \frac {\lambda}{(t-t_0)^2}.
$$

In order to apply comparison,
we want to check if $\mathcal{L}w \leq 0$ in the region $r\geq c^*t - M\log t$, where $M$ is the constant from the previous lemma. This is the same as checking
$$
\begin{aligned}
\mathcal{L}w =&\, U_\gamma   f\gamma^\prime\frac{(N-1)c_\sharp}{t^2} + U' f \left[ f^{m(p-1)-1} \left(c^* - \frac{(N-1)c_\sharp}{t}\right) - g' + \frac{(N-1)c_\sharp}{t}\right]\\
& + |(U^m)'|^{p-1}f^{m(p-1)} \left[\frac{N-1}{r} - \gamma\right]+ U\left[f' + \frac{f^{m(p-1)}h(U) - h(fU)}{U}\right]\\
 =&\,U'f \left( \frac{\lambda\log(t-t_0)}{(t-t_0)^2} - \frac {\lambda}{(t-t_0)^2}\right) + |(U^m)'|^{p-1} f^{m(p-1)}\left( \frac{(N-1)}{r} - \gamma \right)\\
& +  U_\gamma f\gamma^\prime\frac{(N-1)c_\sharp}{t^2} + Uf' +f^{m(p-1)}h(U) - h(fU) \leq 0,
\end{aligned}
$$
where $U_\gamma$ is the derivative of $U$ with respect to the second parameter and the dependence on $t$ of $f$ and $g$ has been omitted. Since multiplying the inequality by a positive quantity does not affect its sign, we multiply the previous equation by $(m-\alpha)U^{m-\alpha-1}$ to obtain
$$
\begin{array}{l}
(m-\alpha)U^{m-\alpha-1}\mathcal{L}w = (U^{m-\alpha})_\gamma\underbrace{ \left[f\gamma^\prime\frac{(N-1)c_\sharp}{t^2}\right]}_{\mathcal{A}}\\
 \qquad+ (m-\alpha)U^{m-\alpha+1}\underbrace{\left[f' +\frac{(f^{m(p-1)}-1)h(U)}{U} + \frac{h(U)-h(fU)}{U}\right]}_{\mathcal{B}}\\
 \qquad+  (U^{m-\alpha})'\underbrace{\left[      \frac{\lambda\log(t-t_0)}{(t-t_0)^2} - \frac {\lambda}{(t-t_0)^2}       - (mU^{m-1})^{p-1}|U'|^{p-2} f^{m(p-1)}\left( \frac{(N-1)}{r} - \gamma \right)\right]}_{\mathcal{C}}.
\end{array}
$$
Dealing as in~\cite[Lemma 3.4]{Du-Quiros-Zhou-Preprint} and using the results in Subsection~\ref{subsect-dependence.convection}, we see that the term $\mathcal{A}$ is $O(t^{-2})$, so let us focus on the other two.

Making use of the Mean Value Theorem, we see that the term $\mathcal{B}$ satisfies
\begin{align}\notag
\mathcal{B} &= \frac{2\log(t-t_0)}{(t-t_0)^3} - \frac{1}{(t-t_0)^3} -m(p-1)\frac{\log (t-t_0)}{(t-t_0)^2}\theta+ O(1/t^2)  +\frac{\log (t-t_0)}{(t-t_0)^2}h'(\nu)\\ \nonumber
&= \left( h'(\nu)-m(p-1)\theta\right)\frac{\log (t-t_0)}{(t-t_0)^2}+ O(1/t^2)
\end{align}
for  some  $\nu\in (0,1)$ and $\theta\in\mathbb{R}$. On the other hand, since $U^{(m-1)(p-1)}_{c^*}|U'_{c^*}|^{p-2} \to 0$ as $U_{c^*}\to 1$ and as $U_{c^*}\to 0$, there must exist a positive constant $C_1$ such that
$$
\mathcal{C}\geq \frac{\lambda\log(t-t_0)}{(t-t_0)^2} - \frac {\lambda}{(t-t_0)^2}   - C_1\left( \frac{(N-1)}{r} - \gamma \right).
$$
Therefore, since $\frac{(N-1)}{r} - \gamma \leq \frac{(N-1)M\log t}{(c^* t)^2} +O(1/t^2)$, see~\cite{Du-Quiros-Zhou-Preprint},  choosing $\lambda$ big enough, there is a constant $\mu>0$ such that
$$
\mathcal{C}\geq\frac{\mu\log(t-t_0)}{(t-t_0)^2} + O(1/t^2).
$$
Note that we can make $\mu$ as big as we want just by taking $\lambda$ bigger.

In the end, we arrive  at
$$
\begin{array}{lcc}
(m-\alpha)U^{m-\alpha-1}\mathcal{L}w \leq (U^{m-\alpha})_\gamma O(1/t^2)\\[10pt] \nonumber
 \displaystyle\qquad+\frac{\log(t-t_0)}{(t-t_0)^2}\left(  \left( h'(\nu)-m(p-1)\theta\right) (m-\alpha)U^{m-\alpha+1}  - \mu|(U^{m-\alpha})'| \right),
\end{array}
$$
and therefore it is enough to check that
\begin{equation}\label{eq:ineq_shift}
 \left( h'(\nu)-m(p-1)\theta\right)(m-\alpha)U^{m-\alpha+1}  - \mu|(U^{m-\alpha})'|\leq 0,
\end{equation}
since by the results in Subsection~\ref{subsect-dependence.convection} we have that $(U^{m-\alpha})_\gamma\sim O(1)$. Since $h'(1)<0$, there is a value $\delta>0$ such that $h'(U)<0$ for all $U\in (1-\delta, 1]$ and $\theta$  close enough to 0.
If $U\in (1-\delta/2, 1]$, we take $T>0$ big enough such that for $t>T$ we have that $fU\in(1-\delta, 1]$ and therefore $h'(\nu)<0$ and $\theta\sim 0$, so that~\eqref{eq:ineq_shift} is satisfied. On the other hand, if $U\in [0, 1-\delta/2]$, since $|(U^{m-\alpha})'| \to (c^*)^\alpha$ as $U\to 0$, it is easy to see that we can choose a big enough $\mu$ (by making $\lambda$ big) such that again we have~\eqref{eq:ineq_shift}, checking therefore that $\mathcal{L}w\leq 0$.
\end{proof}

\subsection{Convergence}

We still need one more tool to prove~Theorem~\ref{thm:convergence.compact.support}, a characterization of certain solutions for the problem in moving coordinates defined for \emph{all} times, and not only from a certain time on. Such solutions are known as \emph{eternal solutions}.
\begin{Theorem}\label{tma:eternal}
Let $\mathcal{U}=\mathcal U(\xi, t)$ be a nonincreasing (in the variable $\xi$) weak solution to
\begin{equation}
\label{eq:eternal}
\mathcal{U}_t=\Delta_p \mathcal{U}^m + c^*\mathcal{U}^\prime+h(\mathcal{U}),\quad (\xi,t)\in\mathbb{R}^2
\end{equation}
such that
$$
U_{c^*}(\xi+C)\leq \mathcal{U}(\xi,t)\leq U_{c^*}(\xi-C)
$$
for some $C >0$. Then there exists a constant $\xi^*\in[-C,C]$ such that $\mathcal{U}(\xi,t)=U_{c^*}(\xi - \xi^*)$.
\end{Theorem}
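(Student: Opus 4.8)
The plan is to exploit a sliding/monotonicity argument in the spirit of the classical proof that bounded entire solutions of travelling-wave type must coincide with the wave itself. Define, for each eternal solution $\mathcal U$, the quantity
\[
\xi^*(t):=\sup\{\,s\in\mathbb{R}:\ \mathcal U(\xi,t)\le U_{c^*}(\xi-s)\ \text{for all }\xi\in\mathbb{R}\,\}.
\]
The hypothesis $U_{c^*}(\xi+C)\le\mathcal U(\xi,t)\le U_{c^*}(\xi-C)$ shows that $\xi^*(t)$ is finite and lies in $[-C,C]$ for every $t$; monotonicity of $U_{c^*}$ and the fact that $\mathcal U(\cdot,t)$ is nonincreasing make the supremum well behaved. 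First I would check that $t\mapsto\xi^*(t)$ is nonincreasing: since $U_{c^*}(\xi-\xi^*(s))$ is itself a solution of \eqref{eq:eternal} (it is a stationary solution in these moving coordinates), and $\mathcal U(\cdot,s)\le U_{c^*}(\cdot-\xi^*(s))$, the comparison principle for sub- and supersolutions gives $\mathcal U(\cdot,t)\le U_{c^*}(\cdot-\xi^*(s))$ for all $t\ge s$, whence $\xi^*(t)\le\xi^*(s)$. Since $\xi^*$ is bounded and nonincreasing on all of $\mathbb{R}$, it has finite limits $\xi^*(-\infty)=:\xi_-\ge\xi_+:=\xi^*(+\infty)$, both in $[-C,C]$.

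The heart of the argument is a strong-maximum-principle (sliding) step showing that once $\mathcal U(\cdot,t_0)\le U_{c^*}(\cdot-\xi^*(t_0))$, \emph{either} the two functions coincide on the common positivity set \emph{or} the inequality becomes strict with a quantitative gap that allows $\xi^*$ to be decreased at later times. Concretely, set $w(\xi,t):=U_{c^*}(\xi-\xi^*(t_0))-\mathcal U(\xi,t)\ge 0$ for $t\ge t_0$. On any space-time region where $\mathcal U>0$ (hence the equation is uniformly parabolic there, using the gradient bound from Corollary~\ref{cor:bound.flux} to keep $|\nabla\mathcal U^m|$ controlled, and $p\ge2$), $w$ satisfies a linear parabolic inequality with bounded coefficients, so by the strong maximum principle $w$ is either identically zero or strictly positive in the interior. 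The delicate point is the behaviour near the free boundary of $U_{c^*}$, i.e.\ near $\xi=\xi^*(t_0)$, where degeneracy occurs; here I would use the precise boundary behaviour \eqref{profile:property} of the wavefront pressure, which pins down the slope $\bigl(\frac{m}{m-\alpha}U_{c^*}^{m-\alpha}\bigr)'\to-(c^*)^\alpha$, together with the flux bound, to rule out tangency at the front unless the profiles actually agree. If $w\not\equiv0$, a compactness/continuity argument produces $\varepsilon>0$ with $\mathcal U(\cdot,t_0+1)\le U_{c^*}(\cdot-\xi^*(t_0)-\varepsilon)$, contradicting the definition of $\xi^*(t_0+1)$ unless... — so in fact we must be able to conclude $\xi^*$ is eventually constant, and by the same reasoning run forward and backward, $\xi_-=\xi_+=:\xi^*$, and $\mathcal U(\xi,t)=U_{c^*}(\xi-\xi^*)$ for all $t$.

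More carefully, the cleanest route is: by the monotonicity just established, pick any increasing sequence $t_n\to+\infty$ and a decreasing sequence $s_n\to-\infty$; parabolic regularity (interior $C^{1,\beta}$ estimates for the nondegenerate regions, plus the uniform flux bound near the front) gives, along subsequences, locally uniform convergence of $\mathcal U(\cdot,\cdot+t_n)$ and $\mathcal U(\cdot,\cdot+s_n)$ to eternal solutions $\mathcal U_+$ and $\mathcal U_-$ which are \emph{trapped between translates of $U_{c^*}$ and, by construction of $\xi^*$, touch $U_{c^*}(\cdot-\xi_+)$, resp.\ $U_{c^*}(\cdot-\xi_-)$, from below}. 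The strong maximum principle / Hopf-lemma analysis above then forces $\mathcal U_\pm(\xi,t)\equiv U_{c^*}(\xi-\xi_\pm)$. Finally one shows $\xi_-=\xi_+$: if not, $\xi_->\xi_+$ strictly, but applying comparison between $\mathcal U$ and the \emph{upper} barrier built from $\xi^*(t)$ one also gets a symmetric lower control, and the resulting squeeze combined with the already-identified limits forces equality; alternatively, monotonicity plus the limit identifications directly give that $\xi^*(t)$ must be constant (a nonincreasing function whose forward and backward time-shift limits both equal wavefront translations cannot jump without violating the strict sliding inequality at an intermediate time). Either way $\mathcal U\equiv U_{c^*}(\cdot-\xi^*)$ with $\xi^*\in[-C,C]$.

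I expect the main obstacle to be the degenerate boundary analysis: making rigorous the claim that $\mathcal U$ cannot touch the translated wavefront $U_{c^*}(\cdot-s)$ \emph{only at the free boundary} without coinciding with it. Away from $\mathcal U=0$ and $\nabla\mathcal U=0$ this is the standard strong maximum principle, but at the front the operator degenerates doubly (both because $u=0$ and because $(u^m)'$ is small deep in the positivity region); the resolution is to work with the pressure variable $v=\frac{m}{m-\alpha}u^{m-\alpha}$, whose gradient is bounded (Lemma~\ref{lema_bounded_derivative_pressure}) and whose free-boundary slope is exactly $-(c^*)^\alpha$ by \eqref{profile:property}, turning the comparison near the front into a comparison of two Lipschitz functions with the \emph{same} nonzero boundary slope, for which a front-propagation comparison argument (as used already in Lemma~\ref{lema_propagation} and in the construction of sub-/supersolutions) applies. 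The rest — monotonicity of $\xi^*$, extraction of limits, and the final squeeze — is routine given the comparison principle and the parabolic estimates already invoked in Section~\ref{sect-convergence.compact.support}.
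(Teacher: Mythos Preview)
Your sliding outline is essentially the strategy the paper defers to in the cited references, so the overall approach matches. Three points need attention.

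\medskip

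\textbf{A sign slip.} With $U_{c^*}$ nonincreasing, the set $\{s:\mathcal U(\cdot,t)\le U_{c^*}(\cdot-s)\}$ is \emph{upward}-closed (larger $s$ shifts the profile right, making it larger), so its supremum is $+\infty$. You want $\xi^*(t):=\inf\{s:\mathcal U(\cdot,t)\le U_{c^*}(\cdot-s)\}$; with this correction the monotonicity argument you give (comparison with the stationary translate) indeed yields $\xi^*$ nonincreasing and bounded in $[-C,C]$.

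\medskip

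\textbf{The closure $\xi_-=\xi_+$ is not proved.} Neither alternative you sketch actually pins this down: the first (``symmetric lower control'') is unspecified, and the second tacitly assumes a \emph{uniform} sliding gain $\varepsilon>0$ independent of the time at which you apply it, which you have not established. The standard fix is to introduce the lower optimal shift $\eta^*(t):=\sup\{s:\mathcal U(\cdot,t)\ge U_{c^*}(\cdot-s)\}$, nondecreasing with limits $\eta_-\le\eta_+$ and $\eta^*\le\xi^*$. Once you identify (via your compactness + touching step) the backward limit along $s_n\to-\infty$ as the single profile $U_{c^*}(\cdot-\xi_-)$, you also get $\eta^*(s_n)\to\xi_-$, whence $\eta_-=\xi_-$. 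Then $\xi_-=\eta_-\le\eta_+\le\xi_+\le\xi_-$ forces all four equal.

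\medskip

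\textbf{The degenerate-boundary step and how the references handle it.} You are right that the crux is excluding tangency of $\mathcal U$ and a translate $U_{c^*}(\cdot-a)$ occurring \emph{only} at the free boundary. In the cited proofs this is not done via a raw Hopf lemma in the pressure, but by exploiting the explicit barriers $w(\xi,t)=f(t)\,U_{c^*}(\xi+c^*t-g(t))$ of Lemma~\ref{lema2}. The key structural fact is Lemma~\ref{lema1}(f): the asymptotic shift $\xi_0:=\lim_{t\to\infty}(g(t)-c^*t)$ exists and satisfies $|\xi_0-g_0|\to0$ as $f_0\to1$. This lets one transport an interior strict gap at one time into a genuine shift of the comparison translate at all later times, without analysing the front directly; combined with eternality (start the comparison at any $s$ and let $s\to-\infty$) one obtains the constancy of $\xi^*$. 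Your pressure-based Hopf argument can be made to work (the slope information \eqref{profile:property} is exactly what is needed), but it is heavier than the ODE route already set up in Section~\ref{subsect-one.dimensional}.
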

We are saying that every eternal solution $\mathcal{U}$ of equation~\eqref{eq:eternal} that is nonincreasing and trapped between two profiles is, actually, a wavefront. Thus, if $u$ converges to a eternal solution that satisfies the hypotheses of Theorem~\ref{tma:eternal}, it actually converges to a stationary profile. The proof of this theorem is analogous to the ones for the case $p=2$ given in detail in~\cite{Du-Quiros-Zhou-Preprint,Garriz-2020}.

With all these tools at hand, the proof of Theorem~\ref{thm:convergence.compact.support} is very similar to the one for the case $p=2$ and logistic nonlinearity given in~\cite[Section 6]{Du-Quiros-Zhou-Preprint}.

\medskip

\noindent\emph{Sketch of the proof of Theorem~\ref{thm:convergence.compact.support}. } First we prove that $\limsup\limits_{t\to\infty} u(\eta(t) - r, t)>0$ for every $r>0$. This, together with the identification of eternal solutions and the results about the logarithmic shift and the bound for the flux of the previous sections allow us to prove, similarly to~\cite[Section 6.1]{Du-Quiros-Zhou-Preprint}, that there exists a sequence $\{t_n\}\to \infty$ and a constant $r_0\in \mathbb{R}$ such that, for any $a\in \mathbb{R}$,
$$
\begin{aligned}
&\lim\limits_{n\to \infty} u(r+c^*t_n - (N-1)c_\sharp \log t_n) = U_{c^*}(r-r_0)\quad\text{uniformly in }r>a,\\ &\lim\limits_{n\to\infty} (\eta(t_n) - c^*t_n + (N-1)c_\sharp \log t_n) = r_0.
\end{aligned}
$$
The last part of the proof consists in showing that this convergence is not restricted to a specific time sequence. This can be done following~\cite[Section 6.2]{Du-Quiros-Zhou-Preprint}.
\qed

\subsection{The one-dimensional case}
\label{subsect-one.dimension}

When $N=1$ we can give a precise description of the long-time behaviour for spreading solutions with bounded and compactly supported initial data without having to ask them to be symmetric.  Indeed, if we define the left and right free boundary as
$$
\zeta_+(t) := \inf \{r \in \mathbb{R} : u(x,t)=0\text{ for all }x \geq r\}, \qquad
\zeta_-(t) := \sup \{r \in \mathbb{R} : u(x,t)=0\text{ for all }x \leq r\},
$$
following the steps from~\cite[Section 5]{Garriz-2020} it is easy to obtain  the following result.
\begin{Theorem}
Let  $N=1$. Let $u$ be a spreading solution of problem~\eqref{eq:main} corresponding to a bounded and compactly supported initial datum $u_0$, and let $\zeta_\pm$ be its left and right free boundaries. There exist constants $\xi_\pm\in\mathbb{R}$ such that
$$
\begin{aligned}
\lim\limits_{t \rightarrow \infty} \sup\limits_{x \in \overline{\mathbb{R}_+}} |u(x+c_*t,t) - U_{c^*}(x-\xi_+)| = 0, \qquad\lim\limits_{t\to\infty}\zeta_+(t)-c^*t=\xi_+,\\
\lim\limits_{t \rightarrow \infty} \sup\limits_{x \in \overline{\mathbb{R}_-}} |u(x-c_*t,t) - U_{c^*}(\xi_--x)| = 0, \qquad\lim\limits_{t\to\infty}\zeta_-(t)+c^*t=\xi_-.
\end{aligned}
$$
\end{Theorem}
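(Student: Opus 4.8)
The plan is to treat the right front $\zeta_+$ in detail; the statement for the left front $\zeta_-$ follows by applying the same argument to $\tilde u(x,t):=u(-x,t)$, which solves the same equation since $N=1$. The crucial structural point is that for $N=1$ equation~\eqref{eq:main} carries \emph{no} convection term, so the wavefront governing the front is $U_{c^*}$ itself (the convection parameter $\gamma$ and the constant $c_\#$ play no role) and no logarithmic shift appears. After a finite time the support of $u(\cdot,t)$ is an interval $[\zeta_-(t),\zeta_+(t)]$ with $\zeta_+(t)-\zeta_-(t)\to\infty$, so we may work with $u$ restricted to a neighbourhood of $\zeta_+(t)$.

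First I would record the rough localisation of the front: since $u$ spreads, Theorem~\ref{thm:spreading} gives $\zeta_+(t)/t\to c^*$, while Lemma~\ref{lemma:exponential-convergence-to-1} (which holds for every $N$, in particular $N=1$) yields $u\le 1+M\mathrm e^{-kt}$ everywhere and $u\ge 1-M\mathrm e^{-kt}$ on $\{|x|\le \hat c t\}$. Next I would sandwich $u$ between the one-dimensional sub- and supersolutions of Subsection~\ref{subsect-one.dimensional}, namely $w(x,t)=f(t)U_{c^*}(x-g(t))$ with $(f,g)$ solving~\eqref{system:sub_super}: by Lemmas~\ref{lema1} and~\ref{lema2} one has $f(t)\to1$, $g(t)-c^*t\to\xi_0$, and $w$ is a subsolution when $f_0\in(1-\delta,1)$, a supersolution when $f_0\in(1,1+\delta)$. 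Choosing the starting time $T_0$ large, the exponential estimates above let us place a subsolution below $u$ (it lies below $u$ on the relevant compact set because $u\approx1$ there, and its front sits to the left of $\zeta_+(T_0)$) and a supersolution above $u$ (using $u\le1+M\mathrm e^{-kt}$ and that $u$ has finite support). Comparison then shows that $\zeta_+(t)-c^*t$ is bounded and is in fact squeezed between the two eventually monotone quantities $g_{\mathrm{sub}}(t)-c^*t$ and $g_{\mathrm{sup}}(t)-c^*t$.

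To upgrade boundedness to convergence I would run a compactness/eternal-solution argument, exactly as in~\cite[Section~5]{Garriz-2020}. Given any sequence $t_n\to\infty$, set $u_n(\xi,t):=u(\xi+c^*(t+t_n),\,t+t_n)$. The uniform H\"older estimates of~\cite{Porzio-Vespri-1993,Vespri-1992} together with the flux bound of Theorem~\ref{thm:bound.flux} (Corollary~\ref{cor:bound.flux}) give enough compactness to extract a subsequence along which $u_n\to\mathcal U$ locally uniformly, with $\mathcal U$ an eternal weak solution of $\mathcal U_t=\Delta_p\mathcal U^m+c^*\mathcal U'+h(\mathcal U)$; the sandwich from the previous step forces $U_{c^*}(\xi+C)\le\mathcal U(\xi,t)\le U_{c^*}(\xi-C)$ for a fixed $C$. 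By Theorem~\ref{tma:eternal}, $\mathcal U(\xi,t)=U_{c^*}(\xi-\xi_+)$ for some $\xi_+\in[-C,C]$. Finally one shows $\xi_+$ is independent of the chosen sequence — the squeezing step of~\cite[Section~6.2]{Du-Quiros-Zhou-Preprint} adapted to $N=1$: the monotonicity in $t$ of $g_{\mathrm{sub}}-c^*t$ and $g_{\mathrm{sup}}-c^*t$, combined with the fact that every subsequential limit is a translate of the \emph{same} $U_{c^*}$, pins down a single value, giving $\zeta_+(t)-c^*t\to\xi_+$ and $\sup_{x\ge0}|u(x+c^*t,t)-U_{c^*}(x-\xi_+)|\to0$.

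I expect the main obstacle to be checking that the hypotheses of Theorem~\ref{tma:eternal} survive the limit for \emph{non-symmetric} data, specifically that $\mathcal U$ is nonincreasing in $\xi$. This is not immediate, since $u(\cdot,t)$ need not be monotone globally; the point is that near the right front $u$ is trapped between the two monotone profiles $f_{\mathrm{sub}}(t)U_{c^*}(\cdot-g_{\mathrm{sub}}(t))$ and $f_{\mathrm{sup}}(t)U_{c^*}(\cdot-g_{\mathrm{sup}}(t))$, and a zero-number / intersection argument (or direct comparison on half-lines) shows $u(\cdot,t)$ becomes monotone to the right of its bulk, a property inherited by $\mathcal U$. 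The remaining ingredients — the finite time after which the positivity set is an interval, and the passage from sequential to full convergence — are routine once this monotonicity near the front is established.
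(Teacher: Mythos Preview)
Your proposal is correct and follows essentially the same route as the paper, which simply says the result is obtained ``following the steps from~\cite[Section~5]{Garriz-2020}''; your outline---sandwich by the sub/supersolutions of Subsection~\ref{subsect-one.dimensional}, compactness plus the flux bound, identification of eternal limits via Theorem~\ref{tma:eternal}, and the squeezing argument for uniqueness of the shift---is exactly that scheme. Your flagged obstacle (monotonicity of the eternal limit near the front) is the right technical point to watch, and the resolution you sketch is the one used in the cited reference.
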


In this one-dimensional setting we are also able to deal with certain nonnegative initial data  whose supporting sets  are unbounded in one direction of the real line.

Let $\delta>0$ be  such that $h^\prime(u)<0$ for $u\in (1-\delta,1+\delta)$; see~\eqref{eq:reaction_deriv_in_1}.
We define the class of functions
 \begin{equation*}\label{def:class_A}
 \mathcal{A}:=\{ v \geq 0 : \|v\|_\infty<\infty,  v(x) \equiv 0\text{ for all }x \geq \bar x \text{ for some } \bar x \in \mathbb{R}, \ \liminf_{x \rightarrow -\infty} v(x) \in (1-\delta,1+\delta)\}.
 \end{equation*}
 Solutions with initial data in this class have a \emph{right free boundary},
\begin{equation*}\label{eq:zeta}
\zeta(t) := \inf \{r \in \mathbb{R} : u(x,t)=0\text{ for all }x \geq r\},
\end{equation*}
spread, and converge to the travelling wave $U_{c^*}$ both in shape and speed.

A similar, and even slightly simpler, analysis  to the one for the class of bounded and  compactly supported initial data, allows to prove the following result.

\begin{Theorem}
\label{thm:convergence.class.A}
Let $u$ be a weak solution to~\eqref{eq:main} with $u_0 \in \mathcal{A}$, and let $\zeta$ be the function giving its right free boundary. Then there exists a value $\xi_0 \in \mathbb{R}$ such that
\begin{equation*}
\label{eq:convergence.class.A}
\lim\limits_{t \rightarrow \infty} \sup\limits_{x \in \mathbb{R}} |u(x,t) - U_{c^*}(x - c^* t - \xi_0)|= 0,\qquad\lim\limits_{t\to\infty}\zeta(t)-c^*t=\xi_0.
\end{equation*}
\end{Theorem}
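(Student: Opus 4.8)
The strategy is to run the same programme as for Theorem~\ref{thm:convergence.compact.support}, taking advantage of the one-dimensional setting: a datum in $\mathcal{A}$ has a single free boundary, the right one $\zeta(t)$, and is already within $(1-\delta,1+\delta)$ near $-\infty$, so no left front has to be followed and, since $N=1$, no logarithmic correction appears; moreover the spreading results of Section~\ref{sect-propagation.vs.vanish} will not be needed, as spreading will drop out of the comparison functions below. First I would record two elementary facts. Since $u_0\equiv0$ on $[\bar x,\infty)$, finite propagation~\cite{Gilding-Kersner-1996} gives $\zeta(t)<\infty$ for every $t>0$; and since $M':=\max\{1,\|u_0\|_\infty\}\ge1$ is a constant supersolution (because $h(M')\le0$) and the flat ODE $\bar v'=h(\bar v)$, $\bar v(0)=M'$, decreases to $1$ (because $h<0$ on $(1,\infty)$), comparison yields $u(\cdot,t)\le\bar v(t)$, hence $\limsup_{t\to\infty}\|u(\cdot,t)\|_\infty\le1$ and $u(\cdot,t)<1+\delta'$ for all $t\ge T_0$, for any prescribed $\delta'\in(0,\delta)$, with $\delta$ as in the definition of $\mathcal{A}$.

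Next I would sandwich $u$ between the one-dimensional sub- and supersolutions of Subsection~\ref{subsect-one.dimensional}. For the lower bound, pick $\underline f_0\in(1-\delta,1)$ with $\underline f_0<\liminf_{x\to-\infty}u_0$ (possible since this $\liminf$ exceeds $1-\delta$); then $u_0\ge\underline f_0>\underline f_0\,U_{c^*}(\,\cdot-\underline g_0)$ on a left half-line, and for $\underline g_0$ sufficiently negative one gets $\underline f_0\,U_{c^*}(\,\cdot-\underline g_0)\le u_0$ on all of $\mathbb{R}$, the inequality being trivial to the right of $\underline g_0$ where the left-hand side vanishes. Lemma~\ref{lema2}(i) and the comparison principle give $u(x,t)\ge\underline f(t)U_{c^*}(x-\underline g(t))$, and Lemma~\ref{lema2} then produces $\underline\xi\in\mathbb{R}$ with $\underline f(t)U_{c^*}(\xi+c^*t-\underline g(t))\to U_{c^*}(\xi-\underline\xi)$ uniformly in $\xi$; in particular $u$ spreads (uniformly on compacts, using also the upper barrier above), $\liminf_{t\to\infty}(\zeta(t)-c^*t)\ge\underline\xi$ and $\liminf_{t\to\infty}u(\xi+c^*t,t)\ge U_{c^*}(\xi-\underline\xi)$ uniformly in $\xi$. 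For the upper bound, starting from time $T_0$ take $\overline f_0\in(1+\delta',1+\delta)$ and $\overline g_0$ so large that $U_{c^*}(\zeta(T_0)-\overline g_0)>(1+\delta')/\overline f_0$; monotonicity of $U_{c^*}$ then forces $\overline f_0\,U_{c^*}(x-\overline g_0)\ge1+\delta'\ge u(x,T_0)$ for $x<\zeta(T_0)$, while the inequality is trivial for $x\ge\zeta(T_0)$. Lemma~\ref{lema2}(ii) and comparison (from time $T_0$) then yield $\overline\xi\in\mathbb{R}$ with $\limsup_{t\to\infty}(\zeta(t)-c^*t)\le\overline\xi$ and $\limsup_{t\to\infty}u(\xi+c^*t,t)\le U_{c^*}(\xi-\overline\xi)$. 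Hence $\underline\xi\le\liminf_{t\to\infty}(\zeta(t)-c^*t)\le\limsup_{t\to\infty}(\zeta(t)-c^*t)\le\overline\xi$, and along any sequence $t_n\to\infty$ every locally uniform limit $\mathcal{U}$ of $v(\cdot,\cdot+t_n):=u(\cdot+c^*(\cdot+t_n),\cdot+t_n)$ satisfies
$$
U_{c^*}(\xi-\underline\xi)\le\mathcal{U}(\xi,t)\le U_{c^*}(\xi-\overline\xi),\qquad \xi\in\mathbb{R}.
$$

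It remains to identify the limit profile. The function $v(\xi,t):=u(\xi+c^*t,t)$ solves~\eqref{eq:eternal}, and thanks to the flux bound of Theorem~\ref{thm:bound.flux} together with the H\"older estimates of~\cite{Porzio-Vespri-1993,Vespri-1992} the orbit $\{v(\cdot,\cdot+t_n)\}_n$ is precompact in $C_{\rm loc}$, so its limits are \emph{eternal} solutions of~\eqref{eq:eternal} trapped between the two displayed profiles, i.e.\ between $U_{c^*}(\cdot+C)$ and $U_{c^*}(\cdot-C)$ with $C:=\max\{|\underline\xi|,|\overline\xi|\}$. I would then show, as in~\cite[Section~5]{Garriz-2020}, that $\limsup_{t\to\infty}u(\zeta(t)-r,t)>0$ for every $r>0$ and that $u(\cdot,t)$ is nonincreasing in a left neighbourhood of $\zeta(t)$ for all large $t$, so that each such limit $\mathcal{U}$ is nonincreasing and Theorem~\ref{tma:eternal} applies, giving $\mathcal{U}(\xi,t)=U_{c^*}(\xi-\xi^*)$ for some $\xi^*\in[-C,C]$. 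Finally, to pass from subsequential to full convergence one squeezes $u$, at a time $t_n$ where $v(\cdot,t_n)$ is $\varepsilon$-close to $U_{c^*}(\cdot-\xi^*)$, between a subsolution and a supersolution of the type used above whose asymptotic shifts differ from $\xi^*$ by $O(\varepsilon)$, exactly as in~\cite[Section~6.2]{Du-Quiros-Zhou-Preprint}; letting $\varepsilon\to0$ shows $\xi_0:=\xi^*$ is independent of the sequence and yields both limits of the statement, the one for $\zeta$ following from the uniform convergence of the profile. The main obstacle is the monotonicity of $u(\cdot,t)$ near $\zeta(t)$: data in $\mathcal{A}$ are not symmetric, so no reflection is available, and there is no clean zero-number theory for $\Delta_p$, which is precisely why one must argue along the lines of~\cite[Section~5]{Garriz-2020}; the precompactness of the orbit in the moving frame — the other technical ingredient — rests entirely on the flux estimate of Section~\ref{sect-bound.flux}.
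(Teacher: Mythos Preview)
Your proposal is correct and follows essentially the same route the paper indicates: the paper does not give a proof of Theorem~\ref{thm:convergence.class.A} but only remarks that ``a similar, and even slightly simpler, analysis to the one for the class of bounded and compactly supported initial data'' yields the result, referring implicitly to the one-dimensional scheme of~\cite[Section~5]{Garriz-2020} and the tools of Subsections~\ref{subsect-one.dimensional} and~\ref{sect-bound.flux}. Your write-up makes this explicit---the sandwich between the sub- and supersolutions $f(t)U_{c^*}(x-g(t))$ of Lemma~\ref{lema2}, the flux bound to get compactness in the moving frame, the eternal-solution identification via Theorem~\ref{tma:eternal}, and the final squeeze to pass from subsequential to full convergence---and you correctly observe that the class~$\mathcal{A}$ hypothesis makes the argument simpler (spreading is automatic, no logarithmic shift since $N=1$).
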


If  $\liminf\limits_{x\to -\infty} u_0(x) \leq 1-\delta$, there is in general no guarantee for the solution to spread. However, if $\liminf\limits_{x\to-\infty} u_0>0$, in certain situations we can guarantee that the solution will be pushed into the class $\mathcal{A}$ in a finite time, so that the above convergence result, Theorem~\ref{thm:convergence.class.A}, holds. Let us give two examples.

If $h$ is such that we are under the conditions for the \textit{hair-trigger effect} to hold, then it is trivial to put a monotone nonincreasing in space function below $u_0$ that is going to grow to 1, hence pushing our solution into class $\mathcal{A}$. This condition depends on $h$.

If $u_0$ is not in class $\mathcal{A}$, but it is big enough at $-\infty$ for us to apply the same ideas of Lemma~\ref{lema_propagation} but with a non-symmetric function
\begin{equation}\label{eq:three.parameters.class.A}
\mathfrak{u}_0(x; \rho,c,\eta)  =
			\begin{cases}
            \eta,\quad &x\leq \rho, \\
            U^{c,\eta}(x - \rho),\quad &\rho < x\leq \rho + b, \\
             0,\quad &x\geq \rho + b,
            \end{cases}
\end{equation}
then $u$ will also be pushed into  $\mathcal{A}$.

\begin{Corollary}
Let $u_0$ be non-negative, bounded, piecewise continuous, $u_0(x) \equiv 0$ for all $x \geq \bar x$ for some  $\bar x \in \mathbb{R}$ and $\liminf\limits_{x\to-\infty} u_0(x)>0$. If the reaction term $h$ satisfies the hypotheses of Theorem~\ref{thm:spreading.vs.vanishing}, or if there exist $x_0\in\mathbb{R}$ and admissible $\rho,c,\eta>0$  such that  $u(x,T)\ge \mathfrak{u}_0(x-x_0; \rho,c,\eta)$ for some $T\geq 0$,  with $\mathfrak{u}_0$ as in \eqref{eq:three.parameters.class.A}, then the conclusions of Theorem~\ref{thm:convergence.class.A} hold.
\end{Corollary}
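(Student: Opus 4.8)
\medskip

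\noindent\emph{Plan of the proof. }
Since problem~\eqref{eq:main} is autonomous in time, it suffices to exhibit a finite time $t_0$ for which $u(\cdot,t_0)\in\mathcal A$; then Theorem~\ref{thm:convergence.class.A}, applied with $u(\cdot,t_0)$ as initial datum, yields the two limits, after adding $c^*t_0$ to the limiting shift. Membership in $\mathcal A$ requires three things of $u(\cdot,t_0)$: (i) a finite sup norm; (ii) a finite right free boundary; (iii) $\liminf_{x\to-\infty}u(x,t_0)\in(1-\delta,1+\delta)$. Property (i) holds for every $t$ by comparison with the flat ODE supersolution $M$, $M'=h(M)$, $M(0)=\|u_0\|_\infty$; moreover $M(t)\to1$ in both scenarios (in the first because $a=0$; in the second because the hypothesis forces $\|u_0\|_\infty>a$, as noted below). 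Property (ii) is finite speed of propagation to the right: since $u_0\equiv0$ on $[\bar x,\infty)$ and $u$ is bounded, comparison with a supersolution of the form $\overline W(x-\bar x-Ct)$, where $\overline W$ equals $\max\{1,\|u_0\|_\infty\}$ to the left of a finite front and decays to $0$ like a travelling wave to its right (for $C$ large), keeps the right free boundary $\zeta(t)$ finite for all $t$. The real content of the proof is property (iii).

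For (iii) we use a single mechanism in both scenarios: construct $\underline u_0$ nonnegative, bounded above by $1$, nonincreasing in $x$, vanishing on a right half-line, with $\underline u_0\le u(\cdot,T)$ for some $T\ge0$, and such that the solution $\underline u$ of~\eqref{eq:main} with initial datum $\underline u_0$ converges to $1$ uniformly on compact subsets of $\mathbb R$. In the first scenario, take $T=0$ and $\underline u_0$ nonincreasing, $\le u_0$, equal to $a_0:=\min\{\tfrac12\liminf_{x\to-\infty}u_0,1\}$ on a sufficiently far left half-line and linearly decreasing to $0$ afterwards; being a nontrivial nonnegative solution of~\eqref{eq:main} under the hair-trigger hypothesis, $\underline u\to1$ on compacts by Theorem~\ref{thm:spreading.vs.vanishing}. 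In the second scenario, take $\underline u_0=\mathfrak u_0(\cdot-x_0;\rho,c,\eta)$ of~\eqref{eq:three.parameters.class.A}, which is nonincreasing, bounded by $\eta<1$, and satisfies $\underline u_0\le u(\cdot,T)$ by assumption; that $\underline u\to1$ on compacts is the one-dimensional counterpart of Lemma~\ref{lema_propagation}, whose proof carries over once one observes that for $N=1$ there is no convection, so one uses the profile $U^{c,\eta}$ of Lemma~\ref{lem:profile.subsolution} with $\gamma=0$, $c\in(0,c^*)$, $\eta\in(\beta_c,1)$, and the moving subsolution $W(x,t)=\mathfrak u_0(x-x_0-c_1t;\rho,c,\eta)$ with $c_1\in(0,c)$, which is a weak subsolution thanks to~\eqref{eq:profile.subsolution} (the profile is decreasing and its flux has a negative jump from the inside to the outside at the outer edge).

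The upgrade of this to property (iii) is the crucial, and essentially the only nonroutine, step. Comparison with spatial translates shows $\underline u(\cdot,t)$ is nonincreasing in $x$, while $\underline u_0\le1$ gives $\underline u\le1$; hence for every $A\in\mathbb R$ and every $t$, $1\ge\underline u(x,t)\ge\underline u(A,t)$ for all $x\le A$, so the convergence $\underline u(A,t)\to1$ forces $\underline u(\cdot,t)\to1$ \emph{uniformly} on $(-\infty,A]$. Since $u(\cdot,T+t)\ge\underline u(\cdot,t)$ and $u\le M(t)\to1$, for any $\varepsilon>0$ there is $t_0$ with $|u(x,t_0)-1|<\varepsilon$ for all $x\le A$; choosing $\varepsilon=\delta/2$ and any $A$ yields, together with (i)--(ii), $u(\cdot,t_0)\in\mathcal A$, and Theorem~\ref{thm:convergence.class.A} finishes the proof. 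It remains to justify that in the second scenario $\|u_0\|_\infty>a$: otherwise $u\le a$ for all time by comparison with the constant (super)solution $a$, which is incompatible with $u(\cdot,T)\ge\mathfrak u_0(\cdot-x_0;\rho,c,\eta)$, a function of height $\eta>a$.
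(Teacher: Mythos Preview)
Your proof is correct and follows the same approach outlined in the paper: show that the solution enters the class $\mathcal A$ at some finite time and then invoke Theorem~\ref{thm:convergence.class.A}. The paper only provides a two-paragraph sketch before the statement (place a nonincreasing comparison function below $u_0$ and let it grow to~1, respectively use the one-sided analogue of Lemma~\ref{lema_propagation}); you have fleshed out precisely these ideas, including the monotonicity-in-$x$ argument that upgrades convergence on compacts to uniform convergence on left half-lines.
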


%



\end{document}